      \definecolor{bleu_sombre}{rgb}{0,0,0.6}  
     \definecolor{rouge_sombre}{rgb}{0.8,0,0}
    \definecolor{vert_sombre}{rgb}{0.1,0.4,0.1}
\definecolor{mygray}{gray}{0.6}
\definecolor{mygray2}{gray}{0.8}
\DeclareMathOperator{\Hom}{Hom}
\DeclareMathOperator{\Ba}{\mathcal{B}}
\DeclareMathOperator{\colim}{colim}
\DeclareMathOperator{\hocolim}{hocolim}
\DeclareMathOperator{\Coend}{Coend}
\renewcommand{\thefigure}{\ifnum \c@section>\z@ \thesection.\fi
 \@arabic\c@figure}
\title{The relative lattice path operad} 
\author{Alexandre Quesney}
\date{November, 2017}
\begin{document}

\maketitle

\theoremstyle{definition}
\newtheorem{de}{Definition}[section]
\theoremstyle{remark}
\newtheorem{rmq}[de]{Remark}
\newtheorem{rmk}[de]{Remark}
\newtheorem{data}[de]{Data}

\newtheorem{exple}[de]{Example}
\theoremstyle{plain}
\newtheorem{theo}[de]{Theorem}
\newtheorem{prop}[de]{Proposition}
\newtheorem{lem}[de]{Lemma}
\newtheorem{cor}[de]{Corollary}
\newtheorem{properties}[de]{Properties}
\numberwithin{equation}{section}

\newtheorem{proj}{Projet}

\newtheorem*{theo*}{Theorem}
\newtheorem*{prop*}{Proposition}
\newtheorem*{cor*}{Corollary}

\newcommand{\ot}{\otimes} 
\newcommand{\os}{\Omega \Sigma}
\newcommand{\La}{A}
\newcommand{\ov}[1]{\overline{#1}}
\newcommand{\OS}[1]{\Omega^{#1} \Sigma^{#1}}
\newcommand{\uv}[1]{\underline{#1}}
\newcommand{\wid}[1]{\widetilde{#1}}
\newcommand{\att}[1]{\textcolor{red}{\textit{#1}}\marginpar{\textcolor{red}{~~~$\bullet$}}}
\newcommand{\note}[1]{\marginpar{\begin{footnotesize}\textcolor{red}{#1}\end{footnotesize}}}
\newcommand{\ca}[1]{\mathcal{#1}}
\newcommand{\catsimpop}{\bigtriangleup^{\text{op}}}
\newcommand{\catsimp}{\bigtriangleup}
\newcommand{\Set}{\text{Set}}
\newcommand{\Oun}{\underline{\Omega}}
\newcommand{\Ocub}{\Omega^{\square}}
\newcommand{\Ccub}{C^{\square}_*}
\newcommand{\cat}[1]{\textbf{#1}}
\newcommand{\act}{\text{act}}
\newcommand{\set}[1]{\{#1\}}
\newcommand{\ord}{\text{ord}}
\newcommand{\ring}{\mathbb{Z}}
\newcommand{\close}{\mathfrak{cl}}
\newcommand{\open}{\mathfrak{op}}
\newcommand{\type}{\chi}
\newcommand{\cgraph}{\ca{RK}}
\newcommand{\n}{e}
\newcommand{\no}{\underline{n}}
\newcommand{\un}[1]{\underline{#1}}
\newcommand{\SC}{\ca{SC}}
\newcommand{\ie}{\emph{i.e.} }
\newcommand{\alex}[1]{{\color{magenta}{#1}}}
\newcommand{\sph}{\mathbb{S}}
\newcommand{\lmodm}{\cat{LMod}_{\ca{M}}}
\newcommand{\rmodm}{\cat{RMod}_{\ca{M}}}
\newcommand{\bmodm}{\cat{BiMod}_{\ca{M}-\ca{N}}}
\newcommand{\lmodas}{\cat{LMod}_{\ca{A}s}}
\newcommand{\rmodas}{\cat{RMod}_{\ca{A}s}}
\newcommand{\bmodas}{\cat{BiMod}_{\ca{A}s}}
\newcommand{\bmodmas}{\cat{BiMod}_{\ca{M}-\ca{A}s}}
\newcommand{\bmodasm}{\cat{BiMod}_{\ca{A}s-\ca{M}}}
\newcommand{\Catend}{\cat{End}}

\newcommand{\sq}[1]{\draw [black,fill=white] ($#1-(.1,.1)$) rectangle ($#1+(.1,.1)$)}

\begin{abstract}
We construct a set-theoretic coloured operad $\ca{RL}$ that may be thought of as a combinatorial model for the Swiss Cheese operad. This is the relative (or Swiss Cheese) version of the lattice path operad constructed by Batanin and Berger. By adapting their condensation process we obtain a topological (resp. chain) operad that we show to be weakly equivalent to the topological (resp. chain) Swiss Cheese operad. 
\end{abstract}
\setcounter{tocdepth}{2}

\section{Introduction}

The Swiss Cheese operad $\SC$ is a 2-coloured topological operad that mixes, in its $m$-dimensional part $\SC_m$, the $m$-dimensional and the $(m-1)$-dimensional parts of the little cubes operad $\ca{C}$. 
 It was introduced by A. Voronov \cite{Voronov} as a natural way to define actions of $C_*(\ca{C}_m)$-algebras  on $C_*(\ca{C}_{m-1})$-algebras and has been  used by M. Kontsevich \cite{Operadsandmotives} in deformation quantization. As announced in \cite{Ainf-HLS}, the Swiss Cheese operad $\SC_m$ also recognizes the pair  ($m$-fold loop space, $m$-fold relative loop space).  
The goal of this paper is to provide a convenient combinatorial model for (the operad of  chains of) $\SC_m$, $m\geq1$.  
\\

In  \cite{Batanin-Berger-Lattice}, Batanin and Berger introduce the notion of \emph{condensation} of a coloured operad. 
By applying this condensation to the \emph{lattice path operad} $\ca{L}$ they obtain a model for the (operad of chains of the) little cubes operad. 

We introduce the \emph{relative lattice path operad} $\ca{RL}$. It is a coloured operad in the category of sets that has  two types of colours (closed and open). %
Taking a cosimplicial object $\delta: \catsimp \to \cat{C}$ in a cocomplete closed monoidal symmetric category $\cat{C}$, 
we adapt Batanin-Berger's method to obtain a functor 
\begin{align*}
  \ca{RL}\text{-algebra} \longrightarrow \Coend_{\ca{RL}}(\delta)\text{-algebra}
\end{align*}
that sends algebras over $\ca{RL}$ into algebras over the \emph{condensation operad} of $\ca{RL}$, that is, the \emph{SC type} operad 
$\Coend_{\ca{RL}}(\delta)$.

The operad $\ca{RL}$ has two filtrations by suboperads $\ca{RL}_m$ and $\ca{RL}'_m$, ${m\geq 1}$; they differ one from each other by their open/closed interacting part. %
  
We are interested by two choices for $\delta$:
\begin{equation*}
\scalebox{1}{
\begin{tikzpicture}[>=stealth,thick,draw=black!50, arrow/.style={->,shorten >=1pt}, point/.style={coordinate}, pointille/.style={draw=red, top color=white, bottom color=red},scale=0.8,baseline=-.5ex]
\matrix[row sep=15mm,column sep=12mm,ampersand replacement=\&]
{
 \node (00) {$\delta_{Top}: \triangle$}; \& \node (01){$\cat{Set}^{\catsimpop}$} ;\& \node (02){$\cat{Top}$} ;\\
}; 
\path
	  
   	  (00)     edge[above,->]      node {$\delta_{yon}$}  (01)
 	  (01)     edge[above,arrow,->]      node {$|-|$}  (02)
 	  ; 
\end{tikzpicture}}
\text{ and }
\scalebox{1}{
\begin{tikzpicture}[>=stealth,thick,draw=black!50, arrow/.style={->,shorten >=1pt}, point/.style={coordinate}, pointille/.style={draw=red, top color=white, bottom color=red},scale=0.8,baseline=-.5ex]
\matrix[row sep=15mm,column sep=12mm,ampersand replacement=\&]
{
 \node (00) {$\delta_{\ring}:\triangle$}; \& \node (01){$\cat{Set}^{\catsimpop}$} ;\& \node (02){$\cat{Ch}(\ring)$,} ;\\
}; 
\path
	  
   	  (00)     edge[above,->]      node {$\delta_{yon}$}  (01)
 	  (01)     edge[above,arrow,->]      node {$C_*(-;\ring)$}  (02)
 	  ; 
\end{tikzpicture}}
\end{equation*}
where $\delta_{yon}([n])=\Hom_{\triangle}(-,[n])$ is the Yoneda functor. 
In this manner, the condensation of $\ca{RL}_m$ leads to a topological operad $\Coend_{\ca{RL}_m}(\delta_{\text{Top}})$ and to a chain operad $\Coend_{\ca{RL}_m}(\delta_{\ring})$, and similarly for $\ca{RL}'_m$.

In the topological case, both the SC type operads $\Coend_{\ca{RL}_m}(\delta_{\text{Top}})$ and $\Coend_{\ca{RL}'_m}(\delta_{\text{Top}})$ naturally act on the pair ($m$-fold loop space, $m$-fold relative loop space). 
\\

In order to rely our operads to the Swiss Cheese operad, we use Berger's method of cellular decompositions. 
The Swiss Cheese operad that we consider is denoted by $\SC_m$, $m\geq 1$, and is the augmented (cubical) version of Voronov's Swiss Cheese operad $\ca{SC}_m^{\text{vor}}$ defined in \cite{Voronov}. 
We construct a cellular decomposition of $\SC_m$ that generalizes 
Berger's cell decomposition of the little $m$-cubes operad \cite{Berger:combinatorialmodel}. 
The latter is indexed by the \emph{extended complete graph operad} $\ca{K}_m$. 
In contrast to the non-relative case,  there are two ways for indexing the cells of  $\SC_m$. This 
 naturally leads us to consider two different indexing operads $\cgraph_m$ and $\cgraph'_{m}$ that may be thought of as the relative versions of $\ca{K}_m$. 
One obtains:  
\begin{theo}
 Let $m\geq 1$. Any topological $\cgraph_{m}$-cellular operad (resp. $\cgraph'_{m}$-cellular operad) is weakly equivalent to the Swiss Cheese operad $\SC_m$.
\end{theo}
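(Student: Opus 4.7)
The plan is to deduce the statement from the existence of two zig-zags of weak equivalences that both factor through a common middle term, the classifying operad $B\cgraph_m$ of the poset operad $\cgraph_m$: on the one hand a zig-zag $\SC_m \simeq B\cgraph_m$ supplied by the cellular decomposition of $\SC_m$ announced earlier, and on the other hand a zig-zag $\ca{O} \simeq B\cgraph_m$ valid for every topological $\cgraph_m$-cellular operad $\ca{O}$. Composing these through $B\cgraph_m$ yields the desired weak equivalence $\ca{O} \simeq \SC_m$.

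The first step is to make precise the cellular decomposition of $\SC_m$ indexed by $\cgraph_m$. Following the pattern of Berger's decomposition of the little $m$-cubes operad \cite{Berger:combinatorialmodel}, I would stratify each space $\SC_m(n)$ according to the relative positions of the little cubes --- keeping track of the two colours (open/closed) and of which cubes touch the boundary hyperplane --- and label each stratum by an element of $\cgraph_m(n)$, so that the closure relations among strata match the partial order on $\cgraph_m(n)$. Taking arity-wise nerves and assembling them into an operad produces $B\cgraph_m$, and a standard deformation-retraction argument on each stratum yields the required zig-zag $\SC_m \simeq B\cgraph_m$. Compatibility of this zig-zag with operadic composition follows from the fact that $\cgraph_m$ is itself an operad of posets whose composition mirrors the plugging in of configurations.

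The second step is essentially formal given Definition \ref{de: SCtype oper}: a $\cgraph_m$-cellular operad $\ca{O}$ carries by hypothesis a stratification indexed by $\cgraph_m$ whose cells are contractible and glued according to the poset order, so the same nerve-versus-stratification comparison produces a natural zig-zag $\ca{O} \simeq B\cgraph_m$ of operads. Combining the two zig-zags through $B\cgraph_m$ completes the proof.

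The main obstacle is the construction of the cellular decomposition of $\SC_m$ itself, which occupies the first step. In Berger's single-coloured setting the poset is generated by pairwise comparisons of cubes; in the Swiss Cheese case one must in addition encode the interaction between open cubes, closed cubes, and the boundary, and verify that the resulting stratification is operadically closed, that each stratum is contractible, and that the closure order is exactly the one used to define $\cgraph_m$. Once this combinatorial bookkeeping is in place, the recognition principle and hence the theorem follow by the formal argument above.
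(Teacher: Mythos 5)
Your overall strategy coincides with the paper's: both halves of the argument factor through the classifying operad $\Ba\cgraph_m$ via the zig-zag $\colim_\alpha\Theta(\alpha)\leftarrow\hocolim_\alpha\Theta(\alpha)\to\hocolim_\alpha(*)\cong\Ba\cgraph_m$ of Lemma \ref{lem: weak equiv BA X}, together with the observation (the analogue of \cite[Lemma 6.11]{BFSV}) that the homotopy colimits assemble into an operad compatibly with these equivalences. That formal half of your proposal is fine.

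The gap is that the substantive half of the theorem --- exhibiting $\SC_m$ as a $\cgraph_m$-cellular operad --- is only announced, not carried out, and it is precisely where the content lies. Two points in particular cannot be waved through. First, the cells are not defined by ``which cubes touch the boundary hyperplane''; they are defined by declaring $C_i\square_{\mu}C_j$ when the cubes/semi-cubes are separated by a hyperplane orthogonal to a coordinate axis of index $\leq\mu$, with a sign convention in the lowest separating direction, and the open/closed asymmetry of $\cgraph_m$ (the conditions $\mu_{i,j}\leq m$ for $\underline{i}\to j$ versus $\mu_{i,j}\leq m-1$ for $i\to\underline{j}$) is forced by the fact that a semi-cube automatically lies on the negative side of any separating hyperplane $H_m$ relative to a cube. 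One must then check the colimit condition: if a configuration lies in two incomparable cells $\alpha,\beta$, it lies in their common boundary, which requires producing a $\gamma\leq\alpha,\beta$ by taking pairwise minima of the colourings/orientations. Second, and more delicately, condition \ref{item 4} of Definition \ref{de: cell decompo} requires \emph{every} cell to be contractible, including the \emph{improper} cells (those with empty interior, which occur as soon as $\alpha$ contains an ``improper pair'', i.e.\ two vertices joined by a monochromatic oriented path of colour $\nu$ but carrying an edge of colour $>\nu$). The coordinate-wise retraction argument only handles proper cells; for improper cells one must show each retracts onto a unique maximal proper subcell, obtained by replacing the colour/orientation of every improper pair with the minimal monochromatic path datum. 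Without this analysis the cellulation hypothesis on $\SC_m$ is unverified and the theorem does not follow.
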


The operad  $\Coend_{\ca{RL}_m}(\delta)$ benefits of a decomposition by ``cells'' that are indexed by the poset operad $\ca{RK}_m$.  This is obtained by means of a map $c_{tot}:\ca{RL}_m \to \ca{RK}_m$ satisfying  $c_{tot}(x\circ_i y)\leq c_{tot}(x)\circ_i c_{tot}(y)$. From this (and similar considerations for $\ca{RL}'_m$), one obtains:  
\begin{theo}\label{th: intro0}
 Let  $m\geq 1$.  The operads $\Coend_{\ca{RL}_m}(\delta_{\text{Top}})$ and $\Coend_{\ca{RL}'_m}(\delta_{\text{Top}})$
  are weakly equivalent to the topological  Swiss-Cheese operad $\SC_m$. 
  The operads $\Coend_{\ca{RL}_m}(\delta_{\ring})$ and $\Coend_{\ca{RL}'_m}(\delta_{\ring})$ are weakly equivalent to the chain  Swiss-Cheese operad  $C_*(\SC_m)$.
\end{theo}

Note that, for each $m\geq 1$, the operad $\Coend_{\ca{RL}_m}(\delta_{\ring})$ (resp. $\Coend_{\ca{RL}'_m}(\delta_{\ring})$) admits a weakly equivalent suboperad $\ca{RS}_m$ (resp $\ca{RS}'_m$). 
These two operads $\ca{RS}_m$ and $\ca{RS}'_m$ are relative versions of the \emph{surjection operad} $\ca{S}_m$ studied by McClure-Smith \cite{McClureSmithDelignconj,McClureSmithMulti} and Berger-Fresse \cite{BergerFresse}. 
\\

 With regards to future applications, we pay close attention to the operads $\ca{RL}_2$ and $\ca{RL}'_2$. 
 They benefit of a description by planar rooted trees with different types of vertices. 
 The operad $\ca{RL}_2$ encodes pairs $(\ca{M},\ca{Z})$ where $\ca{M}$ is a multiplicative non-symmetric operad and   $\ca{Z}$ is bimodule over $\ca{M}-\ca{A}s$ (left action of $\ca{M}$, right action of $\ca{A}s$) where $\ca{A}s$ is the operad of associative algebras; such a pair is naturally endowed with a  morphism of bimodules $\iota:\ca{M}\to \ca{Z}$. 
 The operad $\ca{RL}'_2$ encodes almost similar pairs $(\ca{M},\ca{Z}')$ where, instead, $\ca{Z}'$ and $\iota:\ca{M}\to \ca{Z}'$ are  in the category of bimodules over $\ca{A}s-\ca{M}$.

\paragraph{Outline of the paper.}
Section \ref{sec: prel} is first devoted in setting our conventions and notations on non-symmetric, symmetric, coloured and SC type operads. In particular, we spend time on modules and weak modules over a non-symmetric operad; this will be used in the last section  \ref{sec: RL2}.  Afterwards, we define the (symmetric) \emph{coloured SC-operads} and we explain how we \emph{condense} them to obtain SC type operads. \\ %
In section \ref{sec: cell decomp} we consider the (cubical) Swiss Cheese operad $\SC$. For each integer $m\geq 1$, we construct two cellular decompositions of $\SC_m$, one indexed by $\cgraph_m$ the other one indexed  by $\cgraph'_{m}$. 
\\
Section \ref{sec: RL} concerns the relative lattice path operad $\ca{RL}$ which is a coloured SC-operad.  
By using the condensation process from Section \ref{sec: prel} one obtains an SC type operad $\Coend_{\ca{RL}}(\delta)$. 
We use results of Section \ref{sec: cell decomp} to prove Theorem \ref{th: intro0}. %
We end the section by exhibiting a few examples of representations of $\ca{RL}_m$ and $\ca{RL}'_m$. In particular, we   show that the operads  $\Coend_{\ca{RL}_m}(\delta_{\text{Top}})$ and $\Coend_{\ca{RL}'_m}(\delta_{\text{Top}})$ act on the pair  ($m$-fold loop space, $m$-fold relative loop space). 
\\ 
In Section \ref{sec: rel surj operad} we exhibit the sub-operads $\ca{RS}_m$ and $\ca{RS}'_m$ and show that the inclusions $\ca{RS}_m\hookrightarrow \Coend_{\ca{RL}_m}(\delta_{\ring})$ and $\ca{RS}'_m\hookrightarrow \Coend_{\ca{RL}'_m}(\delta_{\ring})$ are weak equivalences. \\
In Section \ref{sec: RL2} we focus on the operads $\ca{RL}_2$ and $\ca{RL}'_2$ and their representations. 
We describe  $\ca{RL}_2$ and $\ca{RL}'_2$ in terms of planar rooted trees. This provides a convenient language for describing the representations of  $\ca{RL}_2$ and $\ca{RL}'_2$.  
\\

\paragraph{Acknowledgement}
I would like to warmly thank both Muriel Livernet for  corrections and suggestions on an earlier version of this paper and 
Eduardo Hoefel for his constant support.  
I am highly indebted to the anonymous referee for her/his numerous enlightening comments, suggestions  and corrections. 
The author was partially supported by ``Bolsista da CAPES à Projeto 88881.030367/2013-01''.

\section{Preliminaries}\label{sec: prel}

All along the paper $\cat{C}=(C,\ot,1_{\cat{C}})$ denotes a cocomplete closed monoidal symmetric category. 
In particular, $\cat{C}$ is endowed with an object $0\in\cat{C}$ such that $0\ot X=0=X\ot 0$ for all $X\in\cat{C}$.

For two  $\cat{C}$-categories (\ie enriched over $\cat{C}$) $\cat{A}$ and $\cat{B}$, we denote by $\cat{A}\ot \cat{B}$ the category with the pairs $(a,b)$ for $a\in \cat{A}$ and $b\in \cat{B}$ as objects and 
\begin{equation*}
  \Hom_{\cat{A}\ot\cat{B}}((a,b),(a',b')):= \Hom_{\cat{A}}(a,a')\ot \Hom_{\cat{B}}(b,b')
\end{equation*}
as hom-objects, where the tensor on the right hand side is the tensor of $\cat{C}$.

\subsection{Non-symmetric operads and modules}\label{sec: def modules} 
 By a \emph{non-symmetric (non-$\Sigma$) operad} $\cal{M}$ we mean  a collection $\{\ca{M}(n)\}_{n\geq 0}$ of objects of $\cat{C}$ together with a unit $\eta: 1_{\cat{C}}\to \ca{M}(1)$  and  partial composition maps  $\circ_i:\ca{M}(m)\ot \ca{M}(n)\to \ca{M}(m+n-1)$, for $1\leq i\leq m$, 
that  satisfy associativity and unit conditions (see \cite[Definition 1.14]{MSS}). 
One denotes by $\ca{A}s$ the non-$\Sigma$ operad of associative algebras in $\cat{C}$ given by $\ca{A}s(n)=1_{\cat{C}}$ for $n\geq 0$.  
 A non-$\Sigma$ operad $\ca{M}$ is called \textit{multiplicative} if there is a morphism of operads $\alpha:\ca{A}s\to \ca{M}$. 
 A morphism of multiplicative operads  $\alpha:\ca{A}s\to \ca{M}$ and $\alpha':\ca{A}s\to \ca{M}'$    is a morphism of operads $f:\ca{M}\to \ca{M}'$ such that  $f\circ \alpha = \alpha'$. 

In what follows, one recalls the notions of left, right, bi module over a non-$\Sigma$ operad.

\begin{de}\label{de: modules}
Let $\ca{M}$ and $\ca{N}$ be two non-$\Sigma$ operads and let $\ca{Z}=\{\ca{Z}(m)\}_{m\geq 0}$ be a collection of objects of $\cat{C}$. 
 Consider the following morphisms: 
 \begin{equation*}
  \rho_i: \ca{Z}(m)\ot \ca{N}(k) \to \ca{Z}(k+m-1) ~~  \text{ for } 1\leq i\leq m ~\text{(right action)}
 \end{equation*}
 \begin{equation*}
  \lambda: \ca{M}(k)\ot \ca{Z}(m_1)\ot \cdots \ot \ca{Z}(m_k) \to \ca{Z}(m_1+\cdots +m_k) ~\text{(left action).}
  \end{equation*}
 Consider the following relations. 
\begin{enumerate}
\item Unit condition for the left action:
$\scalebox{1}{
\begin{tikzpicture}[>=stealth,thick,draw=black!50, arrow/.style={->,shorten >=1pt}, point/.style={coordinate}, pointille/.style={draw=red, top color=white, bottom color=red},scale=0.8,baseline=-0.6ex]
\matrix[row sep=10mm,column sep=5mm,ampersand replacement=\&]
{
 \node (00) {$\ca{Z}(m)= 1_{\cat{C}}\ot \ca{Z}(m)$}; 
 \& \node (01){$ \ca{M}(1)\ot \ca{Z}(m)$} ; \&  \node (02){$ \ca{Z}(m)$} ;\\
}; 
\path
   	  (00)     edge[above,->]      node[yshift=0mm] {\scriptsize{$\eta\ot id$}}  (01)
 	  (01)     edge[above,->]      node {\scriptsize{$\lambda$}}  (02)
 	  ; 
\end{tikzpicture}}$
 is the identity. 
  \item Associativity of the left action:
  all the diagrams of the following form commute
  \begin{center}
 \scalebox{.9}{
\begin{tikzpicture}[>=stealth,thick,draw=black!50, arrow/.style={->,shorten >=1pt}, point/.style={coordinate}, pointille/.style={draw=red, top color=white, bottom color=red},scale=0.8]
\matrix[row sep=10mm,column sep=10mm,ampersand replacement=\&]
{
 \node (00) {$\ca{M}(k)\ot \ca{M}(l)\ot \bigotimes_{1\leq p\leq k+l-1} \ca{Z}(m_p)$}; 
 \& \node (01){$\ca{M}(k)  \bigotimes_{1\leq p< i} \ca{Z}(m_p)\ot \ca{Z}(x)\ot \bigotimes_{i+l\leq p\leq k+l-1} \ca{Z}(m_p)$} ;\\
 \node (10){$\ca{M}(k+l-1)\ot \bigotimes_{1\leq p\leq k+l-1} \ca{Z}(m_p)$} ;
 \& \node (11){$\ca{Z}(m_1+...+m_{k+l-1})$} ;\\
}; 
\path
   	  (00)     edge[below,->]      node[yshift=-1.5mm] {$(id\ot \lambda\ot id)\circ \tau_{2,(3,i+1)}$}  (01)
 	  (00)     edge[left,arrow,->]      node {$(-\circ_i-)\ot id $}  (10)
 	  (01)     edge[right,->]      node {$\lambda$}  (11)
 	  (10)     edge[above,arrow,->]      node {$\lambda$}  (11)
 	  ; 
\end{tikzpicture}}
\end{center}
where $x:=m_i+...+m_{i+l-1}$, for $1\leq i\leq k$, and $\tau_{2,(3,i+1)}$ denotes the permutation of the second factor with the $i-1$ factors that follow. 
  \item Unity condition of right action: 
  $\scalebox{1}{
\begin{tikzpicture}[>=stealth,thick,draw=black!50, arrow/.style={->,shorten >=1pt}, point/.style={coordinate}, pointille/.style={draw=red, top color=white, bottom color=red},scale=0.8,baseline=-0.6ex]
\matrix[row sep=10mm,column sep=5mm,ampersand replacement=\&]
{
 \node (00) {$\ca{Z}(m)= \ca{Z}(m)\ot 1_{\cat{C}}$}; 
 \& \node (01){$ \ca{Z}(m)\ot  \ca{N}(1)$} ; \&  \node (02){$ \ca{Z}(m)$} ;\\
}; 
\path
   	  (00)     edge[above,->]      node[yshift=0mm] {\scriptsize{$\eta\ot id$}}  (01)
 	  (01)     edge[above,->]      node {\scriptsize{$\rho_i$}}  (02)
 	  ; 
\end{tikzpicture}}$ 
  is the identity. 

\item Associativity and commutativity of right action on different inputs: all the diagrams of the following form commute
  \begin{equation*}
 \scalebox{.85}{
\begin{tikzpicture}[>=stealth,thick,draw=black!50, arrow/.style={->,shorten >=1pt}, point/.style={coordinate}, pointille/.style={draw=red, top color=white, bottom color=red},scale=0.75]
\matrix[row sep=10mm,column sep=3mm,ampersand replacement=\&]
{
 \node (00) {$\ca{Z}(m)\ot \ca{N}(k)\ot \ca{N}(l)$}; 
 \& \node (01){$\ca{Z}(m+k-1) \ot \ca{N}(l)$} ;\\
 \node (10){$\ca{Z}(m)\ot \ca{N}(k+l-1)$} ;
 \& \node (11){$\ca{Z}(m+k+l-2)$.} ;\\
}; 
\path
   	  (00)     edge[below,->]      node[yshift=-1mm] {$\rho_t \ot id $}  (01)
 	  (00)     edge[left,arrow,->]      node {$id \ot (-\circ_i-)$}  (10)
 	  (01)     edge[right,->]      node {$\rho_{t+i-1}$}  (11)
 	  (10)     edge[above,arrow,->]      node[yshift=0mm] {$\rho_t$}  (11)
 	  ; 
\end{tikzpicture}}
 \scalebox{.85}{
\begin{tikzpicture}[>=stealth,thick,draw=black!50, arrow/.style={->,shorten >=1pt}, point/.style={coordinate}, pointille/.style={draw=red, top color=white, bottom color=red},scale=0.75]
\matrix[row sep=10mm,column sep=3mm,ampersand replacement=\&]
{
 \node (00) {$\ca{Z}(m)\ot \ca{N}(k)\ot \ca{N}(l)$}; 
 \& \node (01){$\ca{Z}(m+k-1) \ot \ca{N}(l)$} ;\\
 \node (10){$\ca{Z}(m+l-1)\ot \ca{N}(k)$} ;
 \& \node (11){$\ca{Z}(m+k+l-2)$.} ;\\
}; 
\path
   	  (00)     edge[below,->]      node[yshift=-1mm] {$\rho_t \ot id $}  (01)
 	  (00)     edge[left,arrow,->]      node {$\rho_t\circ \tau_{2,3}$}  (10)
 	  (01)     edge[right,->]      node {$\rho_{t+i-1}$}  (11)
 	  (10)     edge[above,arrow,->]      node[yshift=0mm] {$\rho_i$}  (11)
 	  ; 
\end{tikzpicture}}
  \end{equation*}
  \item Associativity of right and left actions: \label{item: assoc left action}
  all the diagrams of the following form commute
  \begin{equation*} 
   \scalebox{.9}{
\begin{tikzpicture}[>=stealth,thick,draw=black!50, arrow/.style={->,shorten >=1pt}, point/.style={coordinate}, pointille/.style={draw=red, top color=white, bottom color=red},scale=0.8]
\matrix[row sep=10mm,column sep=25mm,ampersand replacement=\&]
{
 \node (00) {$\ca{M}(k)\ot \bigotimes_{1\leq p\leq k} \ca{Z}(m_p) \ot \ca{N}(l)$}; 
 \& \node (01){$\ca{M}(k)  \bigotimes_{1\leq p< s} \ca{Z}(m_p)\ot \ca{Z}(x)\ot \bigotimes_{s<p\leq k} \ca{Z}(m_p)$} ;\\
 \node (10){$\ca{Z}(m_1+...+m_k)\ot \ca{N}(l)$} ;
 \& \node (11){$\ca{Z}(m_1+...+m_{k}+l-1)$,} ;\\
}; 
\path
   	  (00)     edge[below,->]      node[yshift=-1.6mm] {$(id\ot id^{\ot s-1} \ot \rho_i\ot id^{\ot k-s})\circ \tau_{(s+2,k+1),k+2}$}  (01)
 	  (00)     edge[left,arrow,->]      node {$\lambda\ot id $}  (10)
 	  (01)     edge[right,->]      node {$\lambda$}  (11)
 	  (10)     edge[above,arrow,->]      node[yshift=0mm] {$\rho_{i+m_1+...+m_{s-1}-s+1}$}  (11)
 	  ; 
\end{tikzpicture}}
\end{equation*}
where $x=m_s+l-1$.
\end{enumerate}
We establish the following terminology: 
\begin{itemize}
\item $(\ca{Z},\lambda)$ is called a \emph{left module} over $\cal{M}$ if $\lambda$ satisfies (1) and (2).
\item $(\ca{Z},\rho)$ is called  a \emph{right module} over $\cal{N}$ if the $\rho_i$ satisfy (3) and (4). 
 \item $(\ca{Z},\lambda,\rho)$ is called a \emph{bimodule} over $\cal{M}-\ca{N}$ if $\lambda$ and $\rho_i$ satisfy (1)-(5). 
\end{itemize}
\end{de}

\begin{exple}
A non-$\Sigma$ operad $\ca{M}$ is canonically a bimodule over itself (\ie over $\ca{M}-\ca{M}$).  
\end{exple}

Given two non-$\Sigma$ operads $\ca{M}$ and $\ca{N}$, the category of bimodules over $\ca{M}-\ca{N}$ is denoted by  $\bmodm$; the morphisms are morphisms that commute with each of the actions maps. 
A bimodule over $\cal{M}-\ca{M}$ is simply called a bimodule over $\cal{M}$. 
 
 \begin{de}\label{de: f equiva}
  Let $\ca{Z}'$ be a left (resp. right, bi) module over $\ca{M}'$. 
A morphism of non-$\Sigma$ operads $f:\ca{M}\to \ca{M}'$ endows  $\ca{Z}'$ with a  structure of left (resp. right, bi) module over $\ca{M}$ by pulling back the actions maps along $f$ ; we denote the resulting  left (resp. right, bi) module over $\ca{M}$ by $f^*\ca{Z}'$. Explicitly,  $\lambda^{f^*\ca{Z}'}=\lambda'\circ(f\ot id^{\ot k})$ and $\rho_i^{f^*\ca{Z}'}=\rho_i'\circ( id \ot f)$. 

Moreover, if $\ca{Z}$ is a left (resp. right, bi) module over $\ca{M}$, then a morphism $g: \ca{Z} \to \ca{Z}'$  is said \emph{$f$-equivariant} if it is a morphism of left (resp. right, bi) modules over $\ca{M}$, $g: \ca{Z} \to f^*\ca{Z}'$. 
 \end{de}

\begin{exple}
A left module over a multiplicative operad is, by pullback, a  left module over $\ca{A}s$.   
\end{exple}

In \cite[Definition 4.1]{Turchin-Hodge}, the notion of \emph{weak} bimodule over a non-$\Sigma$ operad $\ca{M}$ (in $\cat{Top}$) is introduced. In our framework, it refers to a right module over $\ca{M}$ together with a \emph{weak} left action 
\begin{equation*}
  \lambda^w_i: \ca{M}(k)\ot \ca{Z}(m) \to \ca{Z}(k+m-1) ~~  \text{ for } 1\leq i\leq k,  
 \end{equation*}
that satisfies natural associativity and unit  conditions \ie (1) above and (W1)-(W3) below.  
\begin{enumerate}[W1.]
 \item Associativity of the weak left action:\label{item: assoc L WEAK}
  all the diagrams of the following form commute
  \begin{equation*}
   \scalebox{.9}{
\begin{tikzpicture}[>=stealth,thick,draw=black!50, arrow/.style={->,shorten >=1pt}, point/.style={coordinate}, pointille/.style={draw=red, top color=white, bottom color=red},scale=0.8,baseline=-0.5ex]
\matrix[row sep=10mm,column sep=10mm,ampersand replacement=\&]
{
 \node (00) {$\ca{M}(k)\ot \ca{M}(l)\ot \ca{Z}(m)$}; 
 \& \node (01){$\ca{M}(k) \ot \ca{Z}(l+m-1)$} ;\\
 \node (10){$\ca{M}(k+l-1)\ot \ca{Z}(m)$} ;
 \& \node (11){$\ca{Z}(k+l+m-2)$} ;\\
}; 
\path
   	  (00)     edge[below,->]      node[yshift=-.4mm] {$id\ot \lambda^w_j$}  (01)
 	  (00)     edge[left,arrow,->]      node {$(-\circ_i-)\ot id $}  (10)
 	  (01)     edge[right,->]      node {$\lambda^w_i$}  (11)
 	  (10)     edge[above,arrow,->]      node {$\lambda^w_{j+i-1}$}  (11)
 	  ; 
\end{tikzpicture}}
\text{ for $1\leq i\leq k$ and $1\leq j\leq l$}. 
\end{equation*}
\item Associativity of right and left actions: \label{item: assoc RL WEAK}
  all the diagrams of the following form commute
  \begin{equation*}
    \scalebox{.9}{
\begin{tikzpicture}[>=stealth,thick,draw=black!50, arrow/.style={->,shorten >=1pt}, point/.style={coordinate}, pointille/.style={draw=red, top color=white, bottom color=red},scale=0.8,baseline=-0.5ex]
\matrix[row sep=10mm,column sep=10mm,ampersand replacement=\&]
{
 \node (00) {$\ca{M}(k)\ot \ca{Z}(m) \ot \ca{M}(l)$}; 
 \& \node (01){$\ca{M}(k) \ot  \ca{Z}(m+l-1)$} ;\\
 \node (10){$\ca{Z}(k+m-1)\ot \ca{M}(l)$} ;
 \& \node (11){$\ca{Z}(k+m+l-2)$} ;\\
}; 
\path
   	  (00)     edge[below,->]      node {$id\ot \rho_t$}  (01)
 	  (00)     edge[left,arrow,->]      node {$\lambda^w_i\ot id $}  (10)
 	  (01)     edge[right,->]      node {$\lambda^w_i$}  (11)
 	  (10)     edge[above,arrow,->]      node {$\rho_{i+t-1}$}  (11)
 	  ; 
\end{tikzpicture}}
\text{ for $1\leq i\leq k$ and $1\leq t\leq m$}. 
\end{equation*}
\item Compatibility between the actions and the operad composition: \label{item: compat op RL WEAK}
  all the diagrams of the following form commute
  \begin{equation*}
 \scalebox{.85}{
\begin{tikzpicture}[>=stealth,thick,draw=black!50, arrow/.style={->,shorten >=1pt}, point/.style={coordinate}, pointille/.style={draw=red, top color=white, bottom color=red},scale=0.75]
\matrix[row sep=10mm,column sep=3mm,ampersand replacement=\&]
{
 \node (00) {$\ca{M}(k)\ot \ca{Z}(m) \ot \ca{M}(l)$}; 
 \& \node (01){$\ca{M}(k+l-1) \ot  \ca{Z}(m)$} ;\\
 \node (10){$\ca{Z}(k+m-1)\ot \ca{M}(l)$} ;
 \& \node (11){$\ca{Z}(k+m+l-2)$} ;\\
}; 
\path
   	  (00)     edge[above,->]      node[yshift=3mm] {$((-\circ_t-)\ot id)\circ \tau_{2,3}$}  (01)
 	  (00)     edge[left,arrow,->]      node {$\lambda^w_i\ot id$}  (10)
 	  (01)     edge[right,->]      node {$\lambda^w_{i+l-1}$}  (11)
 	  (10)     edge[above,arrow,->]      node[yshift=2mm] {$\rho_t$}  (11)
 	  ; 
\end{tikzpicture}}
 \scalebox{.85}{
\begin{tikzpicture}[>=stealth,thick,draw=black!50, arrow/.style={->,shorten >=1pt}, point/.style={coordinate}, pointille/.style={draw=red, top color=white, bottom color=red},scale=0.75]
\matrix[row sep=10mm,column sep=3mm,ampersand replacement=\&]
{
 \node (00) {$\ca{M}(k)\ot \ca{Z}(m) \ot \ca{M}(l)$}; 
 \& \node (01){$\ca{M}(k+l-1) \ot  \ca{Z}(m)$} ;\\
 \node (10){$\ca{Z}(k+m-1)\ot \ca{M}(l)$} ;
 \& \node (11){$\ca{Z}(k+m+l-2)$,} ;\\
}; 
\path
   	  (00)     edge[above,->]      node[yshift=3mm] {$((-\circ_t-)\ot id)\circ \tau_{2,3}$}  (01)
 	  (00)     edge[left,arrow,->]      node {$\lambda^w_{i}\ot id$}  (10)
 	  (01)     edge[right,->]      node {$\lambda^w_{i}$}  (11)
 	  (10)     edge[above,arrow,->]      node[yshift=2mm] {$\rho_{t+m-1}$}  (11)
 	  ; 
\end{tikzpicture}}
  \end{equation*}
for $1\leq t<i\leq k$ in the left-sided diagram and $1\leq i<t\leq k$ in the right-sided diagram. 
\end{enumerate}

\begin{exple}
 A non-$\Sigma$ operad $\ca{M}$ is a weak bimodule over itself. 
\end{exple}

 \begin{exple}\label{ex: weak mod and map}
Consider a bimodule $\ca{Z}$ over $\ca{M}$ and suppose it comes together with a bimodule map $\iota: \ca{M}\to \ca{Z}$; recall that $\eta:1_{\cat{C}}\to \ca{M}(1)$ denotes the unit. 
 Pre-composing the left action $\lambda$ of $\ca{Z}$ by $\iota \eta$ at all but one input provides maps $\lambda^w_i:=\lambda\circ(id\ot (\iota\eta)^{\ot i-1}\ot id \ot (\iota\eta)^{\ot k-i}):\ca{M}(k)\ot \ca{Z}(m) \to \ca{Z}(k+m-1)$ that endow $\ca{Z}$ with a weak bimodule structure over  $\ca{M}$. Moreover, for this structure, $\iota$ is a morphism of weak bimodules.   
 \end{exple}

As observed in \cite[Lemma 4.2]{Turchin-Hodge}, one of the interesting features of the \emph{weak} bimodules is the following. 

\begin{lem}\label{lem: ASSOC-weak=cosimpl}
  The structure of a cosimplicial object  is equivalent to the structure of a weak bimodule over $\ca{A}s$.
 \end{lem}

\subsection{Coloured operads}

Coloured (symmetric) operads are also known as \emph{small symmetric multicategories}, see \cite[Definition 2.2.21]{Leinster}. For our purposes one uses left actions for the symmetric groups $\Sigma_n$, so that a coloured operad with set of colours $Col$ (or a $Col$-coloured operad) consists of: 
\begin{itemize}
 \item for each $k\geq 0$ and each $(k+1)$-tuple $(\n_1,...,\n_k;\n)$ of colours $\n_i,\n\in Col$, an object
  $\ca{O}(\n_1,...,\n_k;\n)$ in $\cat{C}$; 
  \item for each colour $\n$, a unit $1_{\cat{C}}\to \ca{O}(\n;\n)$; 
  \item for each  $1\leq i\leq k$,  $(\n_1,\dots,\n_k;\n)$ and $(f_1,\dots,f_l;\n_i)$, substitution maps
 \begin{equation*}
  \circ_i:\ca{O}(\n_1,\dots,\n_k;\n)\ot \ca{O}(f_1,\dots,f_l;\n_i) \to \ca{O}(\n_1,\dots,\n_{i-1},f_1,\dots,f_l,\n_{i+1},\dots,\n_k;\n)
 \end{equation*}
 \item for each $\sigma\in \Sigma_k$, a map $\sigma_*:\ca{O}(\n_1,\dots,\n_k;\n)\to \ca{O}(\n_{\sigma^{-1}(1)},\dots,\n_{\sigma^{-1}(k)};\n)$.
\end{itemize}
Substitution maps are required to satisfy the natural unit, associativity and equivariance axioms.

\begin{exple}
 A \emph{symmetric} operad $\ca{P}$ is a $1$-coloured operad with  $\ca{P}(k)=\ca{P}(\underbrace{*,...,*}_k;*)$. 
\end{exple}

 For a coloured operad $\ca{O}$, the category of its unary operations is called its  \emph{underlying category} and is denoted by $\ca{O}_u$: the objects of $\ca{O}_u$ are the colours; the morphisms   are $\ca{O}_u(\n,f):=\ca{O}(\n;f)$ for $\n,f\in {Col}$.  
The operadic structure of $\ca{O}$ is encoded as functors 
\begin{equation*}
 \ca{O}(\underbrace{-,\cdots, -}_{k}; -): (\ca{O}_u^{\text{op}})^{\ot k} \ot \ca{O}_u \to \cat{C}, ~~~ k\geq 0.
\end{equation*}

Recall that an 
 $\ca{O}$-algebra $X$ is a family $\{X(\n)\}_{\n\in Col}$ of objects $X(\n)\in \cat{C}$ equipped with 
 morphisms 
 \begin{align}\label{eq: action OX}
  \ca{O}(\n_1,...,\n_k;\n)\ot X(\n_1)\ot \dots \ot X(\n_k)\to X(\n),~~~ \n_1,...,\n_k,\n\in Col
 \end{align}
subject to the natural unit, associative and equivariance axioms.
\begin{de}
Let $\cat{C}$ be a \emph{monoidal model category}; see \cite{Hov}.  
 A morphism of $Col$-coloured operads $\ca{O}\to \ca{O}'$ is a \emph{weak equivalence} if and only if each of its components $\ca{O}(n_1,\dots,n_k;n)\to \ca{O}'(n_1,\dots,n_k;n)$ is a weak equivalence.  
Two $Col$-coloured operads $\ca{O}$ and $\ca{O}'$  are said \emph{weakly equivalent} if there is a zig-zag of weak equivalences of $Col$-coloured operads $\ca{O}\leftarrow \cdot \to \ca{O}'$. 
\end{de}
\begin{rmq}
The weak equivalence as described above is  part of the model category structure on the category of $Col$-coloured operads  as established in \cite[Section 3]{BergerMoerdijkResolutionOfColouredOperads} whenever $\cat{C}$ satisfies properties of \textit{loc. cit., Theorem 2.1}.  
\end{rmq}

\subsection{SC-operads and coloured SC-operads}

The SC type operads (see \cite{BataninSymmetrization})  are a special type of  $2$-coloured operads whose structure mimics that of the Swiss-Cheese operad introduced in \cite{Voronov}. Explicitly, by an \emph{SC type operad} (or SC-operad),  one means  a  $\{\close, \open\}$-coloured operad $\ca{O}$  such that  $\ca{O}(c_1,...,c_n;\close)=0$ if there exists a $1\leq i\leq n$ such that $c_i=\open$.  
The colour $\close$ is called the  \emph{closed} colour; the colour $\open$ is called the  \emph{open} colour.

Let us define the coloured version of the SC type operads. 
\begin{de}\label{de: col SC oper} Let $Col$ be a set (of colours). 
 A \emph{coloured SC-operad} is a $Col$-coloured operad $\ca{O}$ that satisfies the following hypotheses.
 \begin{itemize}
 \item[H1.] $Col= {Col}_{\close} \sqcup {Col}_{\open}$.
 \item[H2.] The collection of the  $\ca{O}(\n_1,\dots, \n_k;\n)$ for $\n_i,\n\in {Col}_{\close}$ and $k\geq 0$ forms a sub-operad of $\ca{O}$.
 \item[H3.] The collection of the  $\ca{O}(\n_1,\dots, \n_j;\n)$ for $\n_i,\n\in {Col}_{\open}$ and $j\geq 0$ forms a sub-operad of $\ca{O}$.
 \item[H4.] $\ca{O}(\n_1,\dots, \n_j;\n)=0 \in \cat{C}$ for any $\n\in Col_{\close}$ if there exists $1\leq i\leq j$, such that $\n_i\in {Col}_{\open}$, where $j\geq 1$.
\end{itemize}
  The sub-operad in H2 is called the \emph{closed part of $\ca{O}$}; the sub-operad in H3 is called the \emph{open part of $\ca{O}$}. For $c\in \{\close; \open\}$,  a colour of ${Col}_{c}$ is  called \emph{colour of type} $c$ or  \emph{$c$ colour}.
\end{de}

\begin{exple}
 An SC type operad is a $Col$-coloured SC-operad with $Col_\close=\{\close\}$ and $Col_\open=\{\open\}$. 
\end{exple}

The underlying category $\ca{O}_u$ of a coloured SC-operad contains two particular categories:
\begin{itemize}
 \item $\ca{O}_u^{\close}$ is the sub-category of $\ca{O}_u$ with objects the colours in ${Col}_{\close}$ and morphisms the  $\ca{O}_u(\n,f)$ for $\n,f\in {Col}_{\close}$;
 \item $\ca{O}_u^{\open}$ is the sub-category of $\ca{O}_u$ with objects the colours in ${Col}_{\open}$ and morphisms the $\ca{O}_u(\n,f)$ for $\n,f\in {Col}_{\open}$.
\end{itemize}

Note also that, from the hypothesis on $\ca{O}$, an $\ca{O}$-algebra $X$ can be seen as a pair $(X_{\close},X_{\open})$ where $X_{\close}$ is the sub-family $\{X_{\close}(\n)\}_{\n\in Col_{\close}}$ and $X_{\open}$ is the sub-family $\{X_{\open}(\n)\}_{\n\in Col_{\open}}$.

\subsection{SC functor-operads}\label{sec: SC funt-op}

Here one defines the SC analogues to the functor-operads and their algebras. 
The notion of functor-operad is introduced in \cite{McClureSmith-cosimplicial} and generalizes the notion of operad (see also \cite{Batanin-Berger-Lattice}). 

 Let us fix $\cat{A}$ and $\cat{B}$ be two $\cat{C}$-categories.

For $k\geq 0$, for a collection of $\cat{C}$-functors  
 $ \{ \xi_{\cat{A}_1,...,\cat{A}_k;\cat{A}_{k+1}}: \cat{A}_{1}\ot \cdots \ot \cat{A}_{k} \to \cat{A}_{k+1}\}_{\cat{A}_i\in \{\cat{A},\cat{B}\}}$ 
 and for  $\sigma \in \Sigma_{k}$, 
 we denote by  $ \xi_{\cat{A}_1,...,\cat{A}_k;\cat{A}_{k+1}}^{\sigma}: \cat{A}_{1}\ot \cdots \ot \cat{A}_{k} \to \cat{A}_{k+1}$ the functor 
 \begin{equation*}
\xi_{\cat{A}_1,...,\cat{A}_k;\cat{A}_{k+1}}^{\sigma}(X_1,...,X_{k})=\xi_{\cat{A}_{\sigma^{-1}(1)},...,\cat{A}_{\sigma^{-1}(k)};\cat{A}_{k+1}}(X_{\sigma^{-1}(1)},...,X_{\sigma^{-1}(k)}).
 \end{equation*}
 A collection of $\cat{C}$-functors 
 $\{ \xi_{\cat{A}_1,...,\cat{A}_k;\cat{A}_{k+1}}: \cat{A}_{1}\ot \cdots \ot \cat{A}_{k} \to \cat{A}_{k+1}\}_{\cat{A}_i\in \{\cat{A},\cat{B}\}}$
  is called \emph{twisted symmetric} if there exist  $\cat{C}$-natural transformations 
 $\phi_{\sigma,\cat{A}_1,...,\cat{A}_k;\cat{A}_{k+1}}: \xi_{\cat{A}_1,...,\cat{A}_k;\cat{A}_{k+1}}\to \xi_{\cat{A}_1,...,\cat{A}_k;\cat{A}_{k+1}}^{\sigma}$ for $\sigma\in \Sigma_k$, such that 
 $\phi_{\sigma_1\sigma_2,\cat{A}_1,...,\cat{A}_k;\cat{A}_{k+1}}=(\phi_{\sigma_1,\cat{A}_1,...,\cat{A}_k;\cat{A}_{k+1}})^{\sigma_2}\phi_{\sigma_2,\cat{A}_1,...,\cat{A}_k;\cat{A}_{k+1}} $ 
 and such that $\phi_{id,\cat{A}_1,...,\cat{A}_k;\cat{A}_{k+1}}$ is the identity transformation where $id$ denotes the neutral element of $\Sigma_{k}$.

 \begin{de}\label{de: SC functor-operad}
 An \emph{SC functor-operad} $\xi= \{\xi_{\cat{A}_1,...,\cat{A}_k;\cat{A}_{k+1}}\}_{k,\cat{A}_i}$ over $(\cat{A},\cat{B})$  is the  
 data, for each $k\geq 0$, of a twisted symmetric collection 
$   \xi_{\cat{A}_1,...,\cat{A}_k;\cat{A}_{k+1}}: \cat{A}_{1}\ot \cdots \ot \cat{A}_{k} \to \cat{A}_{k+1}$ 
 indexed by the $(k+1)$-tuples $(\cat{A}_1,...,\cat{A}_k;\cat{A}_{k+1})$ of categories in $\{\cat{A},\cat{B}\}$ such that  $\cat{A}_{k+1}=\cat{B}$ whenever there exists $1\leq i\leq k$ such that $\cat{A}_{i}=\cat{B}$. 
 Such a collection is required to be endowed with natural transformations
 \begin{multline*}
  \mu_{[\cat{A}]_{1,i_1},...,[\cat{A}]_{k,i_k};\cat{A}_{k+1}}: 
  \xi_{\cat{A}_1,...,\cat{A}_{k};\cat{A}_{k+1}}\circ(\xi_{\cat{A}_{1,1},...,\cat{A}_{1,i_1};\cat{A}_{1}}\ot \dots \ot \xi_{\cat{A}_{k,1},...,\cat{A}_{k,i_k};\cat{A}_k}) \\
  \to \xi_{\cat{A}_{1,1},...,\cat{A}_{k,i_k};\cat{A}_{k+1}},~~~\text{ for } i_1,...,i_k\geq 0,
 \end{multline*}
 where $[\cat{A}]_{a,b}=(\cat{A}_{a,1},...,\cat{A}_{a,b};\cat{A}_{a})$. 
These natural transformations have to satisfy the following three conditions.
\begin{enumerate}
 \item For $\cat{A}_0\in \{\cat{A},\cat{B}\}$, the functor  $\xi_{\cat{A}_0;\cat{A}_0}$ is the identity and $\xi_{\cat{A}_{k+1};\cat{A}_{k+1}}\circ \xi_{\cat{A}_1,...,\cat{A}_{k};\cat{A}_{k+1}}=\xi_{\cat{A}_1,...,\cat{A}_{k};\cat{A}_{k+1}}= \xi_{\cat{A}_1,...,\cat{A}_{k};\cat{A}_{k+1}}\circ (\xi_{\cat{A}_1;\cat{A}_1}\ot \dots \ot \xi_{\cat{A}_k;\cat{A}_k})$ where the equalities are obtained via  $\mu_{(\cat{A}_1,...,\cat{A}_k;\cat{A}_{k+1});\cat{A}_{k+1}}$ and $\mu_{(\cat{A}_1;\cat{A}_1),...,(\cat{A}_{k};\cat{A}_k);\cat{A}_{k+1}}$ respectively.
 \item The natural transformations $\mu_{[\cat{A}]_{1,i_1},...,[\cat{A}]_{k,i_k};\cat{A}_{k+1}}$ are associative.
 \item All diagrams of the following forms commute:
  \begin{center}
\begin{tikzpicture}[>=stealth,thick,draw=black!50, arrow/.style={->,shorten >=1pt}, point/.style={coordinate}, pointille/.style={draw=red, top color=white, bottom color=red},scale=0.8]
\matrix[row sep=12mm,column sep=12mm,ampersand replacement=\&]
{
 \node (00) {$\xi_{\cat{A}_1,...,\cat{A}_{k};\cat{A}_{k+1}}\circ(\xi_{\cat{A}_{1,1},...,\cat{A}_{1,i_1};\cat{A}_{1}}\ot \dots \ot \xi_{\cat{A}_{k,1},...,\cat{A}_{k,i_k};\cat{A}_k})$}; \& \node (01){$ \xi_{\cat{A}_{1,1},...,\cat{A}_{k,i_k};\cat{A}_{k+1}}$} ;\\
 \node (10){$\xi_{\cat{A}_1,...,\cat{A}_k;\cat{A}_{k+1}}^{\sigma}\circ(\xi_{\cat{A}_{1,1},...,\cat{A}_{1,i_1};\cat{A}_1}^{\sigma_1}\ot \dots \ot \xi_{\cat{A}_{k,1},...,\cat{A}_{k,i_k};\cat{A}_{k}}^{\sigma_k})$} ;\& \node (11){$ \xi_{\cat{A}_{1,1},...,\cat{A}_{k,i_k};\cat{A}_{k+1}}^{\sigma(\sigma_1,...,\sigma_{k+j})}$.} ; \\
}; 
\path
   	  (00)     edge[left,->]      node {$\phi_{\sigma}\circ (\phi_{\sigma_1}\ot\dots \ot \phi_{\sigma_{k+j}})$}  (10)
 	  (00)     edge[above,arrow,->]     node[yshift=2mm] {$ \mu_{[\cat{A}]_{1,i_1},...,[\cat{A}]_{k,i_k};\cat{A}_{k+1}}$}  (01)
 	  (01)     edge[left,->]      node {$\phi_{\sigma(\sigma_1,\dots,\sigma_{k+j})}$}  (11)
 	  (10)     edge[below,arrow,->]   node[yshift=-3mm]  {$ \mu_{[\cat{A}]_{1,i_1},...,[\cat{A}]_{k,i_k};\cat{A}_{k+1}}$}  (11)
 	  ; 
\end{tikzpicture}
\end{center}
\end{enumerate}
\end{de}

\begin{de}
 Let $\xi= \{\xi_{\cat{A}_1,...,\cat{A}_k;\cat{A}_{k+1}}\}_{k\geq 0}$ be an SC functor-operad over $(\cat{A},\cat{B})$.
 A \emph{$\xi$-algebra} $X$ is a pair $(X_\cat{A},X_\cat{B})\in \cat{A}\ot \cat{B}$ equipped with morphisms in $\cat{A}_{k+1}$,  
 \begin{align*}
  \alpha_{\cat{A}_1,...,\cat{A}_k;\cat{A}_{k+1}}:\xi_{\cat{A}_1,...,\cat{A}_k;\cat{A}_{k+1}}(X_{\cat{A}_1},...,X_{\cat{A}_k})&\to X_{\cat{A}_{k+1}},~~~ k\geq 0,
 \end{align*}
 subject to the following conditions.
\begin{enumerate}
 \item $\alpha_{\cat{A}_1}=1_{X_{\cat{A}_1}}$;
 \item $\alpha_{\cat{A}_1,...,\cat{A}_k;\cat{A}_{k+1}}\circ \phi_{\sigma}=\alpha_{\cat{A}_1,...,\cat{A}_k;\cat{A}_{k+1}}$, for all $\sigma \in \Sigma_k$;  
 \item  all the diagrams of the following form commute
 \end{enumerate}\begin{center}
 \scalebox{0.98}{
\begin{tikzpicture}[>=stealth,thick,draw=black!50, arrow/.style={->,shorten >=1pt}, point/.style={coordinate}, pointille/.style={draw=red, top color=white, bottom color=red},scale=0.8]
\matrix[row sep=15mm,column sep=10mm,ampersand replacement=\&]
{
\node (00) {$\xi_{\cat{A}_1,...,\cat{A}_k;\cat{A}_{k+1}}\circ(\xi_{[\cat{A}]_{1,i_1}}(X_{1,i_1})\ot \dots \ot \xi_{[\cat{A}]_{k,i_k}}(X_{k,i_k}))$}; \& \node (01){$\xi_{\cat{A}_{1,1},...,\cat{A}_{k,i_k};\cat{A}_{k+1}}(X_{\cat{A}_{1,1}},...,X_{\cat{A}_{k,i_k}})$} ;\\
 \node (10){$\xi_{\cat{A}_1,...,\cat{A}_k;\cat{A}_{k+1}}(X_{\cat{A}_{1}},...,X_{\cat{A}_{k+1}})$} ;\& \node (11){$X_{\cat{A}_{k+1}}$} ; \\
}; 
\path
   	  (00)     edge[right,->]      node {$\xi_{\cat{A}_1,...,\cat{A}_k;\cat{A}_{k+1}}(\alpha_{[\cat{A}]_{1,i_1}}\ot\dots \ot \alpha_{[\cat{A}]_{k,i_k}})$}  (10)
 	  (00)     edge[above,arrow,->]     node[yshift=2mm] {$\mu_{[\cat{A}]_{1,i_1},...,[\cat{A}]_{k,i_k};\cat{A}_{k+1}}$}  (01)
 	  (01)     edge[right,->]      node {$\alpha_{\cat{A}_{1,1},...,\cat{A}_{k,i_k};\cat{A}_{k+1}}$}  (11)
 	  (10)     edge[below,arrow,->]   node[yshift=-1mm]  {$\alpha_{\cat{A}_1,...,\cat{A}_k;\cat{A}_{k+1}}$}  (11)
 	  ; 
\end{tikzpicture}}
\end{center}
 where $X_{a,b}$ denotes $X_{\cat{A}_{a,1}},...,X_{\cat{A}_{a,b}}$ and where $[\cat{A}]_{a,b}=(\cat{A}_{a,1},...,\cat{A}_{a,b};\cat{A}_{a})$. 
\end{de}

\begin{rmq}
An {SC functor-operad} over $(\cat{A},\cat{B})$  is a particular example of an \emph{internal symmetric operad} in $\Catend_{\cat{A},\cat{B}}$ (the endomorphism SC type operad  of $(\cat{A},\cat{B})$ in $\cat{Cat}$). 
The notion of internal symmetric operad was introduced in \cite[Definition 9.3]{Bataninhigher}. 
\end{rmq}

 \subsection{Condensation}\label{sec: condensation}

 In this section we explain how we extend the condensation process described in \cite[Section 1]{Batanin-Berger-Lattice} to the case of coloured SC-operads. 
 
 Let  $\ca{O}$ be a coloured SC-operad and let $\delta=(\delta_{\close},\delta_{\open})$ be a pair of two functors $\delta_{\close}: \ca{O}_u^{\close}\to \cat{C}$ and  $\delta_{\open}: \ca{O}_u^{\open}\to \cat{C}$. 
We will define the \emph{condensation operad} of $\ca{O}$, denoted by $\Coend_{\ca{O}}(\delta)$, 
as well as an associated functor 
 \begin{align*}
  \ca{O}\text{-algebra} \longrightarrow \Coend_{\ca{O}}(\delta)\text{-algebra}. 
  \end{align*}
 The operad $\Coend_{\ca{O}}(\delta)$ is obtained by condensing each type of colours into one colour, so that it is an SC type operad. 
 It is obtained in two steps. 
\paragraph{From $\ca{O}$ to the SC functor-operad $\xi(\ca{O})$.}

Recall that, by the hypotheses H2 and H3 from Definition \ref{de: col SC oper}, both $\ca{O}_u^{\close}$ and $\ca{O}_u^{\open}$ are $\cat{C}$-categories. Moreover, the category $\cat{C}^{\ca{O}_u^{\close}}$ (resp. $\cat{C}^{\ca{O}_u^{\open}}$) of $\cat{C}$-functors from $\ca{O}_u^{\close}$ (resp. from $\ca{O}_u^{\close}$) to $\cat{C}$ is a $\cat{C}$-category. 
 For $k\geq 0$ and $(c_1,...,c_k;c_{k+1})$ a tuple of elements in $\{\close;\open\}$ satisfying 
 \begin{equation}\label{eq: col cond}
c_{k+1}=\open \text{ if there exists $1\leq i\leq k$ such that  $c_i=\open$},
 \end{equation}
 one lets $\cat{A}_i:= \cat{C}^{ \ca{O}_u^{c_i}}$ and one defines the $\cat{C}$-functor
\begin{align*}
 \xi(\ca{O})_{c_1,...,c_k;c_{k+1}}: \cat{A}_1\ot \cdots \ot \cat{A}_k \to \cat{A}_{k+1} 
\end{align*}
as the coend
\begin{equation*}
 \xi(\ca{O})_{c_1,...,c_k;c_{k+1}}(X_{c_1},...,X_{c_k})(n)=
 \ca{O}({\underbrace{-,\dots,-}_k};n)\ot_{\ca{O}_u^{c_1}\ot \cdots \ot \ca{O}_u^{c_k}}X_{c_1}(-)\ot \cdots \ot X_{c_k}(-).
\end{equation*}

We have an SC analogue to \cite[Proposition 1.8]{Batanin-Berger-Lattice} or \cite{Day-Street}:
\begin{prop} \label{prop: SC func op}
The functors $\xi(\ca{O})_{c_1,...,c_k;c_{k+1}}$ extend to an SC functor-operad $\xi(\ca{O})$, such that the category of $\ca{O}$-algebras and the category of $\xi(\ca{O})$-algebras are isomorphic.
\end{prop}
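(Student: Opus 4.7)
The proof follows the same pattern as Batanin--Berger's Proposition~1.8, which is itself a variant of Day--Street's classical construction of a functor-operad from an enriched operad via a coend. The additional bookkeeping provided by hypotheses H1--H4 is what guarantees that the resulting structure is genuinely an \emph{SC} functor-operad rather than a plain one.

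First I would construct the composition natural transformations $\mu$. By the universal property of the coend, giving a natural transformation
\[
\xi(\ca{O})_{c_1,\dots,c_k;c_{k+1}}\circ \bigl(\xi(\ca{O})_{c_{1,1},\dots,c_{1,i_1};c_1} \ot \cdots \ot \xi(\ca{O})_{c_{k,1},\dots,c_{k,i_k};c_k}\bigr) \to \xi(\ca{O})_{c_{1,1},\dots,c_{k,i_k};c_{k+1}}
\]
is the same datum as a $\ca{O}_u^{c_{1,1}}\ot\cdots\ot \ca{O}_u^{c_{k,i_k}}$-dinatural family of morphisms
\[
\ca{O}(n_1,\dots,n_k;n) \ot \ca{O}(m_{1,1},\dots,m_{1,i_1};n_1) \ot \cdots \ot \ca{O}(m_{k,1},\dots,m_{k,i_k};n_k) \to \ca{O}(m_{1,1},\dots,m_{k,i_k};n),
\]
which is precisely the operadic composition of $\ca{O}$. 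Similarly, the twisting symmetries $\phi_{\sigma}$ are induced by the $\Sigma_k$-action on $\ca{O}$, so that the family $\xi(\ca{O})_{c_1,\dots,c_k;c_{k+1}}$ is twisting symmetric in the sense of Definition \ref{de: twisting functors}.

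Second, the three axioms required in Definition \ref{de: SC functor-operad}---unit, associativity of $\mu$, and the twisting-equivariance diagram---translate, through the coend/Yoneda correspondence, into precisely the unit, associativity and equivariance axioms satisfied by the coloured operad $\ca{O}$, and therefore hold. The constraint that $c_{k+1}=\open$ whenever some $c_i=\open$, which distinguishes an SC functor-operad from a plain functor-operad, is forced by H4: for any tuple with $c_{k+1}=\close$ and some $c_i=\open$, every object $\ca{O}(n_1,\dots,n_k;n)$ entering the coend is the zero object, so $\xi(\ca{O})_{c_1,\dots,c_k;\close}=0$ and no further structure is needed. Hypotheses H2 and H3 in turn ensure that the endofunctors $\xi(\ca{O})_{c;c}$ are the identity, securing the unit axiom.

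Third, for the equivalence of algebra categories, let $X=(X_\close,X_\open)$ be an $\ca{O}$-algebra. For each admissible tuple $(c_1,\dots,c_k;c_{k+1})$ the structure maps (\ref{eq: action OX}) assemble into a $\ca{O}_u^{c_1}\ot\cdots\ot \ca{O}_u^{c_k}$-dinatural family
\[
\ca{O}(n_1,\dots,n_k;n)\ot X_{c_1}(n_1)\ot\cdots\ot X_{c_k}(n_k)\to X_{c_{k+1}}(n),
\]
which by the universal property of the coend is in bijection with a morphism $\alpha_{c_1,\dots,c_k;c_{k+1}}:\xi(\ca{O})_{c_1,\dots,c_k;c_{k+1}}(X_{c_1},\dots,X_{c_k})\to X_{c_{k+1}}$; conversely every family $(\alpha_\bullet)$ arises in this way. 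The unit, equivariance and associativity axioms for an $\ca{O}$-action correspond term by term to the three conditions defining a $\xi(\ca{O})$-algebra.

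The main technical obstacle is verifying that the associativity square for a $\xi(\ca{O})$-algebra unwinds, after the coend identifications, to the operadic associativity of the $\ca{O}$-action. This is a formal but lengthy diagram chase parallel in spirit to the one carried out in \cite[Proposition 1.8]{Batanin-Berger-Lattice}, and it proceeds unchanged here because H2--H4 only restrict which colour-tuples are non-trivial without affecting the structure of the composition maps themselves.
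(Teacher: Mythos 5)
Your proposal is correct and follows essentially the same route as the paper: the composition transformations $\mu$ are obtained from the operadic composition of $\ca{O}$ via the universal property of the coend, the unit axiom via Yoneda, the twisting symmetry via equivariance, and the algebra correspondence via the bijection between dinatural families and morphisms out of the coend, with H4 guaranteeing that the restricted collection of functors recovers all the action maps. No gaps.
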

\begin{proof}
A straightforward verification, along the lines of  \cite{Day-Street},  shows that the family of the $\xi(\ca{O})_{c_1,...,c_k;c_{k+1}}$ forms an SC functor-operad.

Via the hypothesis H2 and H3 from Definition \ref{de: col SC oper}, an $\ca{O}$-algebra $X$ can be seen as a pair $(X_{\close},X_{\open})$ where  $X_{\close}$ and $X_{\open}$ are functors $X_{\close}:\ca{O}_u^{\close}\to  \cat{C}$ and $X_{\open}:\ca{O}_u^{\open}\to  \cat{C}$ respectively.
 Therefore, the  maps   \eqref{eq: action OX}   induce maps 
\begin{multline*} 
\xi_{c_1,...,c_k;c_{k+1}}(\ca{O})(X_{c_1},...,X_{c_k})(\n_{k+1})= \\ \int^{\n_1,...,\n_{k}}\ca{O}(\n_1,...,\n_k;\n_{k+1})\ot X_{c_1}(\n_1)\ot \dots \ot X_{c_k}(\n_k) 
\to X_{c_{k+1}}(\n_{k+1}),  
 \end{multline*} 
 for $\n_{k+1}\in Col$. They form the maps 
  $\alpha_{c_1,...,c_k;c_{k+1}}:
  \xi(\ca{O})_{c_1,...,c_k;c_{k+1}}(X_{c_1},...,X_{c_k})\to X_{c_{k+1}}$, for $k\geq 0$.  
We conclude that $X$ is a  $\xi(\ca{O})$-algebra because of the unit, associativity and equivariance properties of maps \eqref{eq: action OX}. The isomorphism follows from the universal property of the coend. 
 \end{proof}

\paragraph{From $\xi(\ca{O})$ to the coendomorphism operad $\Coend_{\ca{O}}(\delta)$.} 
The operad  $\Coend_{\ca{O}}(\delta)$ is the coendomorphism operad of the SC functor-operad $\xi(\ca{O})$. 
Explicitly, it is given by: 
 \begin{align*}
  \Coend_{\ca{O}}(\delta)(c_1,...,c_k;c_{k+1})=
  \Hom_{\cat{C}^{\ca{O}_u^{c_{k+1}}}}(\delta_{c_{k+1}},\xi(\ca{O})_{c_1,...,c_k;c_{k+1}}({\delta_{c_1},...,\delta_{c_k}})),
 \end{align*}
 for $(c_1,...,c_k;c_{k+1})$ satisfying \eqref{eq: col cond}. 
The composition maps
 \begin{multline*}
 \Coend_{\ca{O}}(\delta)(c_1,...,c_k;c_{k+1})\ot 
 \Coend_{\ca{O}}(\delta)(c_{1,1},...,c_{1,i_1};c_{1})\ot \cdots \ot \Coend_{\ca{O}}(\delta)(c_{k,1},...,c_{k,i_k};c_{k})\\
 \to  \Coend_{\ca{O}}(\delta)(c_{1,1},...,c_{k,i_k};c_{k+1})
\end{multline*}
 are given by sending maps $f\ot g_1\ot \dots \ot g_k$ to the composite 
 \begin{center}
\begin{tikzpicture}[>=stealth,thick,draw=black!50, arrow/.style={->,shorten >=1pt}, point/.style={coordinate}, pointille/.style={draw=red, top color=white, bottom color=red},scale=0.8]
\matrix[row sep=10mm,column sep=13mm,ampersand replacement=\&]
{
 \node (00) {$\delta_{c_{k+1}}$};\&  \node (03){$\xi(\ca{O})_{c_{1,1},...,c_{k,i_k};c_{k+1}}(\delta_{c_{1,1}},...,\delta_{c_{k,i_k}})$} ; \\
   \node   (01){$\xi(\ca{O})_{c_1,...,c_k;c_{k+1}}(\delta_{c_1},...,\delta_{c_k})$}  ;
 \& \node (02){$\xi(\ca{O})_{c_1,...,c_{k};c_{k+1}}(\xi(\ca{O})_{[c]_{1,i_1}}(\delta),...,\xi(\ca{O})_{[c]_{k,i_k}}(\delta))$.} 
 ;\\
}; 
\path
   	  (00)     edge[left,->]      node {$f$}  (01)
 	  (01)     edge[above,->]      node[yshift=3mm] {\small{$\xi(\ca{O})_{c_{1},...,c_k;c_{k+1}}(g_1,...,g_k)$}}  (02)
 	  (02)     edge[right,arrow,->]   node[yshift=0mm]  {$\alpha_{c_{1},...,c_{k};c_{k+1}}$}  (03)
 	  ; 
\end{tikzpicture}
\end{center}
The action of $\Sigma_k$ on $\Coend_{\ca{O}}(\delta)$  is given by post-composing with the natural transformations $\phi_{\sigma,c_1,...,c_k}({\delta_{c_1},...,\delta_{c_k}}):\xi(\ca{O})_{c_{1},...,c_k;c_{k+1}}({\delta_{c_1},...,\delta_{c_k}})\to \xi(\ca{O})_{c_{\sigma^{-1}(1)},...,c_{\sigma^{-1}(k)};c_{k+1}}({\delta_{c_{\sigma^{-1}(1)}},...,\delta_{c_{\sigma^{-1}(k)}}})$. 
 \\

Given an $\ca{O}$-algebra $X=(X_{\close},X_{\open})$, we set 
\begin{align*}
 Tot_{\delta_{\close}}X_{\close}:=\Hom_{\cat{C}^{\ca{O}_u^{\close}}}(\delta_{\close},X_{\close}) ~~~\text{ and }~~~
Tot_{\delta_{\open}}X_{\open}:=\Hom_{\cat{C}^{\ca{O}_u^{\open}}}(\delta_{\open},X_{\open}).
\end{align*}
Since $X$ is a $\xi(\ca{O})$-algebra by  Proposition \ref{prop: SC func op} and since $\Coend_{\ca{O}}(\delta)$ is the coendomorphism operad of $\xi(\ca{O})$, it follows that  the pair  $(Tot_{\delta_{\close}}X_{\close},Tot_{\delta_{\open}}X_{\open})$ is a ${\Coend}_{\ca{O}}(\delta)$-algebra: 
the action maps 
\begin{equation}\label{eq: action maps}
 \Coend_{\xi(\ca{O})}(\delta)(c_1,...,c_k;c_{k+1})\ot Tot_{\delta_{c_1}}X_{c_1}\ot\cdots \ot Tot_{\delta_{c_k}}X_{c_k} \to Tot_{\delta_{c_{k+1}}}X_{c_{k+1}}
\end{equation}
 are given by sending maps $f\ot g_1\ot \dots \ot g_k$ to the composite 
 \begin{center}
\begin{tikzpicture}[>=stealth,thick,draw=black!50, arrow/.style={->,shorten >=1pt}, point/.style={coordinate}, pointille/.style={draw=red, top color=white, bottom color=red},scale=0.8]
\matrix[row sep=10mm,column sep=15mm,ampersand replacement=\&]
{
 \node (00) {$\delta_{c_{k+1}}$};\& \node (03){$X_{c_{k+1}}$} ; \\
   \node (01){$\xi(\ca{O})_{c_1,...,c_k;c_{k+1}}(\delta_{c_1},...,\delta_{c_k})$} ;
 \& \node (02){$\xi(\ca{O})_{c_1,...,c_k;c_{k+1}}(X_{c_1},...,X_{c_k})$.} ;\\
}; 
\path
   	  (00)     edge[left,->]      node {$f$}  (01)
 	  (01)     edge[above,->]      node[yshift=3mm] {\small $\xi(\ca{O})_{c_1,...,c_k;c_{k+1}}(g_1,...,g_k)$}  (02)
 	  (02)     edge[right,arrow,->]   node[yshift=1mm]  {$\alpha_{c_1,...,c_k;c_{k+1}}$}  (03)
 	  ; 
\end{tikzpicture}
\end{center}

Unit, associative and equivariance properties of the maps \eqref{eq: action maps}  are deduced from the SC functor-operad properties of $\xi(\ca{O})$.

\section{Cellular decompositions of the Swiss Cheese operad}\label{sec: cell decomp}
The little cubes operad $\ca{C}$ has a cellular decomposition indexed by the extended complete graph operad $\ca{K}$, see \cite{Berger:combinatorialmodel} and \cite[4.1]{BFV}. 
We extend this result to the Swiss Cheese operads $\SC_m$, $m\geq 1$ what provides a recognition principle for SC type operads. 
In particular, we construct a poset operad $\cgraph_{m}$ that indexes the cells $(\SC_m)^{(\alpha)}$ of $\SC_m$.  
This leads to  a zig-zag of weak equivalences of operads
 \begin{center}
\scalebox{1}{
\begin{tikzpicture}[>=stealth,thick,draw=black!50, arrow/.style={->,shorten >=1pt}, point/.style={coordinate}, pointille/.style={draw=red, top color=white, bottom color=red},scale=0.8]
\matrix[row sep=5mm,column sep=15mm,ampersand replacement=\&]
{
 \node (00) {$\SC_m$};  \& \node (01){$\hocolim_{\alpha \in \cgraph_{m}} (\SC_m)^{(\alpha) }$} ; \& \node (10){$\Ba \cgraph_{m}$,} ;\\
}; 
\path
   	  (00)     edge[above,<-]      node {$\sim$}  (01)
 	  (01)     edge[above,->]      node {$\sim$}  (10)
 	  ; 
\end{tikzpicture}}
\end{center}
between the Swiss Cheese operad $\SC_m$ and the operad of the geometric realization of the nerve of $\cgraph_{m}$.  
Besides, there is a second way to index the cells $(\SC_m)^{(\alpha)}$;  this is done by another  poset operad $\cgraph'_m$, providing a  similar zig-zag.

\subsection{The Swiss Cheese operad}
The Swiss Cheese operad that we use is the cubical version of the one defined in \cite{Operadsandmotives}.

Let $m\geq 1$. Let $Sym: \mathbb{R}^m\to \mathbb{R}^m$ be the reflection $Sym(x_1,...x_m)=(x_1,...,-x_m)$, and let $Half_+$ be the upper half space 
\begin{equation*}
 Half_+= \{ (x_1,...,x_m)\in \mathbb{R}^m| x_m>0\}.
\end{equation*}
The standard cube $C_0$ in $\mathbb{R}^m$ is $C_0=[-1,1]^{\times m}$.
A cube $C$ in the standard cube is of the form ${C}=[x_1,y_1]\times [x_2,y_2]\times \dots \times [x_m,y_m]$ with  {$-1<x_j<y_j<1$} for $1\leq j\leq m$.

\begin{de}\label{de: operad SC+}
 For  $n \geq 0$ and $c_i,c \in \{\close, \open\}$  we define a topological $\Sigma_n$-space $\SC_m(c_1,...,c_n;c)$ as
 the empty-set if  $c=\close$ and there exists $1\leq i\leq n$ such that $c_i=\open$; for the other cases, it is  
 \begin{itemize}
  \item the space of the little $m$-cubes operad $\ca{C}^{(m)}(n)$ defined in \cite{May} for $c=\close$; 
  \item the empty set if $n=0$;
  \item the one-point space if $n=1$;
  \item in the case $s+t=n\geq 2$ with $s,t\geq 0$ such that $s$ colours $c_i$ are $\close$ and $t$ colours $c_j$ are $\open$, the space of configuration of $2s+t$  disjoint cubes $(C_1,...,C_{2s+t})$ in the standard cube $C_0\in \mathbb{R}^m$ such that $Sym(C_i)=C_{i+s}$ for $1\leq i\leq s$ and $Sym(C_i)=C_{i}$ for $2s+1\leq i\leq 2s+t$ and such that all the cubes $(C_1,...,C_{s})$ are in the upper half space.
 \end{itemize}
\end{de}
\begin{rmq}\label{rmq: half cube}
Because of the symmetry conditions imposed by $Sym$, we may think of $\SC_m(c_1,...,c_n;\open)$ as the configuration space of cubes $(C_1,...,C_s)$  and semi-cubes $(C_{s+1},...,C_{s+t})$ lying in the standard semi-cube $Half_+\cap C_0$. 
\end{rmq}
Similarly to the little $m$-cubes operad $\ca{C}^{(m)}$ the composition maps  
 \begin{align*}
  \circ_i: \SC_m(c_1,...,c_n;c) \times  \SC_m(d_1,...,d_r;c_i) \to  \SC_m(c_1,...,c_{i-1},d_1,...,d_r,c_{i+1},...,c_n;c)
 \end{align*}
 are defined as substitutions of cubes. 
We denote the resulting SC type operad by $\SC_m$.

\newcommand{\catpreop}{\mathbf{\Lambda}^\text{SC}}

\subsection{The {SC extended complete graph} operad}

We define the \emph{SC (or relative) extended complete graph} operad $\cgraph$.
 It is an SC type operad in the category of posets. We provide two filtrations by sub-operads $\{\cgraph_{m}\}_{m\geq 1}$ and $\{\cgraph'_{m}\}_{m\geq 1}$. Their closed part are isomorphic to $\ca{K}_{m}$ and their open part are isomorphic to  $\ca{K}_{m-1}$, where $\{\ca{K}_{m}\}_{m\geq 1}$ denotes the extended complete graph operad defined in \cite[Section 4.1]{BFV}.

 \subsubsection{Definition of $\cgraph$}
 
 Given $n$ colours $c_i\in \{\close, \open\}$, we denote by $\{\widetilde{c}_1,...,\widetilde{c}_n\}$ the set with 
 \begin{equation}\label{eq: notation ctilde}
  \widetilde{c}_i=\begin{cases}
                   i &\text{ if } c_i=\close;\\
                   \underline{i} &\text{ if } c_i=\open.
                  \end{cases}
 \end{equation}
 A colouring and an orientation $(\mu,\sigma)$ on the complete graph on $\{\widetilde{c}_1,...,\widetilde{c}_n\}$ is, for each $1\leq i<j\leq n$, a strict positive natural number  $\mu_{i,j}\in \mathbb{N}^{>0}$  and an orientation  $\sigma_{i,j}$ (that is, $\widetilde{c}_i\to \widetilde{c}_j$ or $\widetilde{c}_i\leftarrow \widetilde{c}_j$). 
 A \emph{monochromatic acyclic orientation} of a complete graph is a colouring and an orientation such that there exist no oriented cycles with the same colour, \emph{i.e.} there are no configurations of the form $\widetilde{c}_{i_1}\to \widetilde{c}_{i_2}\to \cdots \to \widetilde{c}_{i_k}\to \widetilde{c}_{i_1}$ with $\mu_{i_1,i_2}=\mu_{i_2,i_3}=\cdots =\mu_{i_{k-1},i_k}=\mu_{i_{k},i_1}$. 
 A \emph{marked} monochromatic acyclic orientation $(\mu,\sigma)^c$ is a monochromatic acyclic orientation $(\mu,\sigma)$ together with   a colour $c\in \{\close, \open\}$. %
 
 If there exists an $i$ such that $c_i=\open$, then we define $\cgraph(c_1,...,c_n;\close)$ as the empty set. 
 Otherwise, $\cgraph(c_1,...,c_n;c)$ is the set of the marked monochromatic acyclic orientations $(\mu,\sigma)^c$ of the complete graph on $\{\widetilde{c}_1,...,\widetilde{c}_n\}$. 
 
 The poset structure is given by 
  \begin{equation*}
 (\mu,\sigma)^c\leq (\mu',\sigma')^c\Leftrightarrow ~\forall i<j, \text{ or }(\mu_{i,j},\sigma_{i,j})= (\mu'_{i,j},\sigma'_{i,j})~\text{or }  \mu_{i,j}<\mu'_{i,j}. 
 \end{equation*}
 
  Given a permutation $\sigma\in \Sigma_n$ and an element $(\mu,\tau)^c\in \cgraph(c_1,...,c_n;c)$, the resulting element $\sigma\cdot(\mu,\tau)^c\in \cgraph(c_{\sigma^{-1}(1)},...,c_{\sigma^{-1}(n)};c)$ is given by permuting the numbers $i$ by $\sigma$ without changing neither the underline nor the orientation nor the colouring. For example, 
  the edges $\underline{i}\to j$ of $(\mu,\tau)^c$ with colours $\mu_{i,j}$ become the edges  $\underline{\sigma(i)}\to \sigma(j)$ with the same colours $\mu_{i,j}$. 
 
 The compositions 
 \begin{multline*}
  \gamma^{\cgraph}:\cgraph(c_1,...,c_n;c)\times\cgraph(c_{1,1},...,c_{1,k_1};c_{1})\times... \times  \cgraph(c_{n,1},...,c_{n,k_n};c_{n})
  \to \cgraph(c_{1,1},...,c_{n,k_n};c)
 \end{multline*}
send a tuple $(\alpha;\alpha_1,...,\alpha_n)$ of  $\cgraph(c_1,...,c_n;c)\times\cgraph(c_{1,1},...,c_{1,k_1};c_{1})\times\dots \times  \cgraph(c_{n,1},...,c_{n,k_n};c_{n})$ to an element in $\cgraph(c_{1,1},...,c_{n,k_n};c)$ obtained as follows. 
The sub complete graphs with vertices in the same block $\{c_{i,1},...,c_{i,k_i}\}$ is oriented and coloured as  $\alpha_i\in \cgraph(c_{i,1},...,c_{i,k_i};c_{i})$; the edges with vertices in two different blocks  are oriented and coloured as the edges between the corresponding vertices in  $\alpha\in \cgraph(c_1,...,c_n;c)$. 
For example, 
\begin{equation*}
\gamma^{\cgraph}\left(
\begin{tikzpicture}[>=stealth, arrow/.style={->,shorten >=3pt}, point/.style={coordinate}, pointille/.style={draw=red, top color=white, bottom color=red},scale=0.6,baseline=0ex,decoration={markings, mark=at position 0.65 with {\arrow[scale=1.2]{>}}}]%
 \coordinate (A) at (0,0);
 \coordinate (B) at (2,0);
  \draw [decoration={markings, mark=at position 0.65 with {\arrow[scale=1.5,blue]{>}}},postaction={decorate}] (A) -- ++(B) node [below,midway,blue] {\footnotesize{$2$}};
  \draw [black,fill=black] (A) circle (0.1)  ;
 \draw [black,fill=black] (B) circle (0.1)  ;
 \draw (A) node [above] {$\un{1}$};
 \draw (B) node [above] {$2$};
 \draw ($(B)+(.4,.45)$) node [above] {$\open$};
\end{tikzpicture}%
;
\begin{tikzpicture}[>=stealth, arrow/.style={->,shorten >=3pt}, point/.style={coordinate}, pointille/.style={draw=red, top color=white, bottom color=red},scale=0.6,baseline=0ex,decoration={markings, mark=at position 0.65 with {\arrow[scale=1.2]{>}}}]%
 \coordinate (A) at (0,0);
 \coordinate (B) at (2,0);
  \draw [decoration={markings, mark=at position 0.65 with {\arrow[scale=1.5,red]{>}}},postaction={decorate}] (A) -- ++(B) node [below,midway,red] {\footnotesize{$3$}};
  \draw [black,fill=black] (A) circle (0.1)  ;
 \draw [black,fill=black] (B) circle (0.1)  ;
 \draw (A) node [above] {${1}$};
 \draw (B) node [above] {$\un{2}$};
 \draw ($(B)+(.4,.45)$) node [above] {$\open$};
\end{tikzpicture}%
,
\begin{tikzpicture}[>=stealth, arrow/.style={->,shorten >=3pt}, point/.style={coordinate}, pointille/.style={draw=red, top color=white, bottom color=red},scale=0.6,baseline=0ex,decoration={markings, mark=at position 0.65 with {\arrow[scale=1.2]{>}}}]%
 \coordinate (A) at (0,0);
 \coordinate (B) at (2,0);
  \draw [postaction={decorate}] (B) -- (A) node [below,midway] {\footnotesize{$5$}};
  \draw [black,fill=black] (A) circle (0.1)  ;
 \draw [black,fill=black] (B) circle (0.1)  ;
 \draw (A) node [above] {$1$};
 \draw (B) node [above] {$2$};
\end{tikzpicture}%
\right) 
=
\begin{tikzpicture}[>=stealth, arrow/.style={->,shorten >=3pt}, point/.style={coordinate}, pointille/.style={draw=red, top color=white, bottom color=red},scale=0.6,baseline=0ex,decoration={markings, mark=at position 0.65 with {\arrow[scale=1.2]{>}}},baseline=3ex]%
 \coordinate (A) at (0,0);
 \coordinate (B) at (2,0);
 \coordinate (C) at (0,2);
 \coordinate (D) at (2,2);
  \draw [decoration={markings, mark=at position 0.65 with {\arrow[scale=1.5,blue]{>}}},postaction={decorate}] (A) -- (B) node [below,midway,blue] {\footnotesize{$2$}};
  \draw [decoration={markings, mark=at position 0.65 with {\arrow[scale=1.5,red]{>}}},postaction={decorate}] (A) -- (C) node [left,midway,red] {\footnotesize{$3$}};
  \draw [decoration={markings, mark=at position 0.65 with {\arrow[scale=1.5,blue]{>}}},postaction={decorate}] (A) -- (D) node [above,midway,blue,yshift=1mm] {\footnotesize{$2$}};
  \draw [decoration={markings, mark=at position 0.65 with {\arrow[scale=1.5,blue]{>}}},postaction={decorate}] (C) -- (B) node [below,midway,blue,yshift=-1mm] {\footnotesize{$2$}};
  \draw [postaction={decorate}] (B) -- (D) node [right,midway] {\footnotesize{$5$}};
  \draw [decoration={markings, mark=at position 0.65 with {\arrow[scale=1.5,blue]{>}}},postaction={decorate}] (C) -- (D) node [above,midway,blue] {\footnotesize{$2$}};
  \draw [black,fill=black] (A) circle (0.1)  ;
 \draw [black,fill=black] (B) circle (0.1)  ;
 \draw [black,fill=black] (C) circle (0.1)  ;
 \draw [black,fill=black] (D) circle (0.1)  ;
 \draw (A) node [left] {$1$};
 \draw (B) node [right] {$4$};
 \draw (C) node [left] {$\un{2}$};
 \draw (D) node [right] {$3$};
 \draw ($(D)+(.7,.1)$) node [above] {$\open$};
\end{tikzpicture}%
.
\end{equation*}

\subsubsection{Filtrations of $\cgraph$}  
We define two different filtrations of $\cgraph$ by sub-operads $(\cgraph_{m})_{m\geq 1}$ and $(\cgraph'_{m})_{m\geq 1}$.
\\ 
 For $m\geq 1$, the sub-operad $\cgraph_{m}\subset \cgraph$ is defined as follows.  
 The closed part  is defined by 
 \begin{equation*}
   \cgraph_{m}(\close,...,\close;\close) =\{ (\mu,\sigma)^{\close} \in \cgraph(\close,...,\close;\close) ~|~ \mu_{i,j}\leq m ~\forall~i<j\}. 
  \end{equation*}
  The non-closed part is defined, for an $(n+1)$-tuple of colours $(c_1,...,c_n;\open)$, by: 
  \begin{align}\label{de: compl graph altern}
   \cgraph_{m}(c_1,...,c_n;\open) =\{ (\mu,\sigma)^{\open} \in \cgraph(c_1,...,c_n;\open) ~|  \nonumber 
   &~ \mu_{i,j}\leq m &&\text{ if } c_i=c_j=\close, \\\nonumber
   &~ \mu_{i,j}\leq m-1 &&\text{ if }  c_i=c_j=\open, \\ 
   &~\mu_{i,j}\leq m-1 &&\text{ if }  \underline{i}\to j \text{ or } i\leftarrow \underline{j} \\ \nonumber
   &~\mu_{i,j}\leq m &&\text{ if }  i\to \underline{j}  \text{ or } \un{i} \leftarrow {j} \nonumber
   \}.
  \end{align}
  The second sub-operad $\cgraph'_{m}\subset \cgraph$ is obtained by exchanging the last two conditions on $\mu_{i,j}$ above. 
  Explicitly, 
 \begin{equation*}
   \cgraph'_{m}(\close,...,\close;\close) =\cgraph_{m}(\close,...,\close;\close),
  \end{equation*}
  and
  \begin{align}\label{de: compl graph}
   \cgraph'_{m}(c_1,...,c_n;\open) =\{ (\mu,\sigma)^{\open} \in \cgraph(c_1,...,c_n;\open) ~|  \nonumber 
   &~ \mu_{i,j}\leq m &&\text{ if } c_i=c_j=\close, \\\nonumber
   &~ \mu_{i,j}\leq m-1 &&\text{ if }  c_i=c_j=\open, \\ 
   &~ \mu_{i,j}\leq m &&\text{ if }  \underline{i}\to j  \text{ or } i\leftarrow \underline{j} \\ \nonumber 
   &~ \mu_{i,j}\leq m-1 &&\text{ if }  i\to \underline{j}  \text{ or } \un{i} \leftarrow {j} \nonumber
   \}.
  \end{align}

  \begin{rmq}
 For $m=1$ the conditions where $\mu_{i,j}\leq m-1$ cannot be satisfied. It follows that $\cgraph_{1}(c_1,...,c_n;\open)$ and $\cgraph'_{1}(c_1,...,c_n;\open)$ are empty whenever the tuple $(c_1,...,c_n)$ has more than one open colour.  
\end{rmq}

\subsection{Cellular decompositions of the Swiss Cheese operad}

The idea of cellular decomposition of operads comes from \cite{Berger:combinatorialmodel}. It consists in a cellular decomposition of each spaces that is compatible with the operad structure.

Recall from \cite[3.1]{Batanin-Berger-Lattice} that, given a topological space $X$ and a poset $\ca{A}$, 
an $\ca{A}$-\emph{cellulation} of $X$ is a functor $\Theta:\ca{A}\to \cat{Top}$ such that: 
\begin{enumerate}
 \item $\colim_{\alpha \in \ca{A}} \Theta(\alpha)\cong X$;\label{item 1}
 \item for each $\alpha\in \ca{A}$, the canonical map $\colim_{\beta<\alpha}\Theta(\beta)\to \Theta(\alpha)$ is  a closed cofibration;\label{item 3}
 \item for each $\alpha\in \ca{A}$, the ``cell``  $\Theta(\alpha)$ is contractible.\label{item 4}
\end{enumerate} 
Such an $\ca{A}$-cellulation provides a zig-zag of weak equivalences: 
 \begin{equation}\label{lem: weak equiv BA X}
 X\cong \colim_{\alpha \in \ca{A}} \Theta(\alpha) \longleftarrow \hocolim_{\alpha \in \ca{A}} \Theta(\alpha) \longrightarrow \hocolim_{\alpha \in \ca{A}} (*)\cong \Ba \ca{A},
  \end{equation}
where $\Ba \ca{A}$ denotes the realization of the nerve of the category $\ca{A}$.
Moreover, if $X$ and $\ca{A}$ are operads and if the cellular decomposition of $X$ is compatible with its operadic structure, then all the objects in \eqref{lem: weak equiv BA X} are operads and the weak equivalences are morphisms of operads. It is straightforward to check that this holds for SC type operads for which the notion of a compatible cellular decomposition is as follows.

\begin{de} Let $\ca{A}$ be a poset SC type operad.  
 A topological SC type operad $\ca{O}$ is called an $\ca{A}$-\emph{cellular operad} if, for each $(n+1)$-tuple of colours $(c_1,...,c_n;c)$, there is an $\ca{A}(c_1,...,c_n;c)$-cellulation of $\ca{O}(c_1,...,c_n;c)$, 
 \begin{equation*}
  \Theta_{c_1,...,c_n;c}: \ca{A}(c_1,...,c_n;c)\to \cat{Top},
 \end{equation*} 
   subject to the following two compatibilities. 
 \begin{enumerate}
  \item Compatibility with the $\Sigma_n$-action: 
$\Theta_{c_{\sigma^{-1}(1)},...,c_{\sigma^{-1}(n)};c}(\sigma\cdot \alpha)=\sigma\cdot \Theta_{c_1,...,c_n;c}(\alpha)$  for all  $\sigma\in \Sigma_n$  and $\alpha\in \ca{A}(c_1,...,c_n;c)$. 
 \item Compatibility with the operadic composition: 
 \begin{multline*}
 \gamma^{\ca{O}}\Big( \Theta_{c_1,...,c_n;c}(\alpha)\times \Theta_{c_{1,1},...,c_{1,k_1};c_1}(\alpha_1)\times \dots \times \Theta_{c_{n,1},...,c_{n,k_n};c_n}(\alpha_n)\Big)\\
 \subseteq \Theta_{c_{1,1},...,c_{n,k_n};c}\left(\gamma^{\ca{A}}(\alpha;\alpha_1,...,\alpha_n)\right),
 \end{multline*}
 for all variables $c,c_i,c_{i,j},\alpha,\alpha_i$, where $\gamma^{\ca{O}}$ and $\gamma^{\ca{A}}$ denote the composition maps of $\ca{O}$ and $\ca{A}$ respectively. 
 \end{enumerate}
 \end{de}
 
 In what follows, one shows that Berger's cellular decomposition of the little $m$-cubes operad (see \cite[Theorem 1.16]{Berger:combinatorialmodel} and \cite{BFV}) extends to $\SC_m$. 
\begin{theo}\label{th: decompo}
Let $m\geq 1$. The Swiss Cheese operad $\SC_m$ has the structure of an $\cgraph_{m}$-cellular operad as well as the  structure  of  an  $\cgraph'_{m}$-cellular operad.
 \end{theo}
\begin{proof}
We start by the $\cgraph'_{m}$-cellular decomposition; we explain at the end of the proof how to proceed for the other decomposition. 

We use the description of $\SC_m$ via cubes and semi-cubes given in Remark \ref{rmq: half cube}. The integer $m\geq 1$ is fixed. 
 
 \textbf{Notation:} 
 \textit{For $C_1$ either a cube or a semi-cube and $C_2$ either a cube or a semi-cube,  we write $C_1\square_{\mu} C_2$ if they are separated by a hyperplane  $H_i$ orthogonal to the $i$-th coordinate axis for some $i\leq \mu$, such that whenever there is no separating hyperplane $H_i$  for $i<\mu$, the left element $C_1$ lies in the \underline{negative} side of $H_{\mu}$ and $C_2$ lies in the \underline{positive} side of $H_{\mu}$.}
\\
 
 For $\alpha=(\mu,\sigma)^c \in  \cgraph'_m(c_1,...,c_k;c)$, we define $\SC_m(c_1,...,c_k;c)^{(\alpha)}$ to be the cell 
 \begin{align*}
  \{ (C_1,...,C_{k})\in \SC_m(c_1,...,c_k;c)~|~ C_i\square_{\mu_{i,j}} C_j \text{ if } \widetilde{c}_i\to \widetilde{c}_j \text{ and } 
   C_j\square_{\mu_{i,j}} C_i \text{ if } \widetilde{c}_i \leftarrow \widetilde{c}_j  \}.
 \end{align*}
 
 For example, consider the following configurations in $\SC_2(\open,\close;\open)$, 
 \begin{equation*}
X=\begin{tikzpicture}[>=stealth, arrow/.style={->,shorten >=3pt}, point/.style={coordinate}, pointille/.style={draw=red, top color=white, bottom color=red},scale=.6,baseline=4ex]
  \draw [black,fill=mygray2] (0,0) rectangle (6,3)  ;
  \draw [black,fill=mygray2] (1,0) rectangle (3,1)  ;
  \draw [black,fill=mygray] (4,.5) rectangle (5,2) ;
  \draw [line width=1.2,-] (0,0) -- (6,0);
  \draw (2,.5) node  {$\underline{1}$};
  \draw (4.5,1.25) node  {$2$};
  \draw [line width=.9,-] (3.5,3.5) -- (3.5,-.5);
  \draw (4,3.5) node  {$H_1$};
\end{tikzpicture}
~,~ 
Y=
\begin{tikzpicture}[>=stealth, arrow/.style={->,shorten >=3pt}, point/.style={coordinate}, pointille/.style={draw=red, top color=white, bottom color=red},scale=.6,baseline=4ex]
  \draw [black,fill=mygray2] (0,0) rectangle (6,3)  ;
  \draw [black,fill=mygray2] ($(1,0)+(2,0)$) rectangle ($(3,1)+(2,0)$)  ;
  \draw [black,fill=mygray] ($(4,.5)+(-3,0)$) rectangle ($(5,2)+(-3,0)$) ;
  \draw [line width=1.2,-] (0,0) -- (6,0);
  \draw ($(2,.5)+(2,0)$) node  {$\underline{1}$};
  \draw ($(4.5,1.25)+(-3,0)$) node  {$2$};
  \draw [line width=.9,-] (2.5,3.5) -- (2.5,-.5);
  \draw (3,3.5) node  {$H_1$};
\end{tikzpicture}
\text{ and } 
Z= 
\begin{tikzpicture}[>=stealth, arrow/.style={->,shorten >=3pt}, point/.style={coordinate}, pointille/.style={draw=red, top color=white, bottom color=red},scale=.6,baseline=4ex]
  \draw [black,fill=mygray2] (0,0) rectangle (6,3)  ;
  \draw [black,fill=mygray2] (1,0) rectangle (3,1)  ;
  \draw [black,fill=mygray] ($(4,.5)+(-1.5,.775)$) rectangle ($(5,2)+(-1.5,.775)$) ;
  \draw [line width=1.2,-] (0,0) -- (6,0);
  \draw (2,.5) node  {$\underline{1}$};
  \draw ($(4.5,1.25)+(-1.5,.775)$) node  {$2$};
  \draw [line width=.9,-] (-.5,1.15) -- (6.5,1.15);
  \draw (-.5,1.5) node  {$H_2$};
\end{tikzpicture} ,
\end{equation*} 
and the following elements 
\begin{equation*}
 \alpha= 
 \begin{tikzpicture}[>=stealth, arrow/.style={->,shorten >=3pt}, point/.style={coordinate}, pointille/.style={draw=red, top color=white, bottom color=red},scale=0.6,baseline=0ex,decoration={markings, mark=at position 0.65 with {\arrow[scale=1.2]{>}}}]%
 \coordinate (A) at (0,0);
 \coordinate (B) at (2,0);
  \draw [decoration={markings, mark=at position 0.65 with {\arrow[scale=1.5,black]{>}}},postaction={decorate}] (A) -- ++(B) node [below,midway,black] {\footnotesize{$1$}};
  \draw [black,fill=black] (A) circle (0.1)  ;
 \draw [black,fill=black] (B) circle (0.1)  ;
 \draw (A) node [above] {$\un{1}$};
 \draw (B) node [above] {$2$};
 \draw ($(B)+(.4,.45)$) node [above] {$\open$};
\end{tikzpicture}%
~,~ \beta= 
 \begin{tikzpicture}[>=stealth, arrow/.style={->,shorten >=3pt}, point/.style={coordinate}, pointille/.style={draw=red, top color=white, bottom color=red},scale=0.6,baseline=0ex,decoration={markings, mark=at position 0.65 with {\arrow[scale=1.2]{<}}}]%
 \coordinate (A) at (0,0);
 \coordinate (B) at (2,0);
  \draw [decoration={markings, mark=at position 0.65 with {\arrow[scale=1.5,black]{<}}},postaction={decorate}] (A) -- ++(B) node [below,midway,black] {\footnotesize{$1$}};
  \draw [black,fill=black] (A) circle (0.1)  ;
 \draw [black,fill=black] (B) circle (0.1)  ;
 \draw (A) node [above] {$\un{1}$};
 \draw (B) node [above] {$2$};
 \draw ($(B)+(.4,.45)$) node [above] {$\open$};
\end{tikzpicture}%
\text{ and } \gamma= \begin{tikzpicture}[>=stealth, arrow/.style={->,shorten >=3pt}, point/.style={coordinate}, pointille/.style={draw=red, top color=white, bottom color=red},scale=0.6,baseline=0ex,decoration={markings, mark=at position 0.65 with {\arrow[scale=1.2]{>}}}]%
 \coordinate (A) at (0,0);
 \coordinate (B) at (2,0);
  \draw [decoration={markings, mark=at position 0.65 with {\arrow[scale=1.5,black]{>}}},postaction={decorate}] (A) -- ++(B) node [below,midway,black] {\footnotesize{$2$}};
  \draw [black,fill=black] (A) circle (0.1)  ;
 \draw [black,fill=black] (B) circle (0.1)  ;
 \draw (A) node [above] {$\un{1}$};
 \draw (B) node [above] {$2$};
 \draw ($(B)+(.4,.45)$) node [above] {$\open$};
\end{tikzpicture}%
\text{ that form } \ca{RK}'_2(\open,\close;\open).
\end{equation*}

The cell $\SC_2(\open,\close;\open)^{(\alpha)}$ is made of configurations of type $X$; the cell 
$\SC_2(\open,\close;\open)^{(\beta)}$ is made of configurations of type $Y$; and, 
the cell  $\SC_2(\open,\close;\open)^{(\gamma)}$ is made of configurations of type $X$, $Y$ and configurations of type  $Z$ (which intersect  with configurations of type $X$ or $Y$  whenever $C_1$ and $C_2$ are separated by $2$ hyperplanes $H_1$ and $H_2$). 
\\

 As a remark, note that in $\SC_m$, whenever $C_1$ is a semi-cube and $C_2$ is a cube, if $H_m$ exists, then $C_1$ lies in the negative side of $H_m$ (e.g. $Z$). 
 From this one can see that, with this definition of the cells, the condition  $\mu_{i,j}\leq m-1$ if $i\to \underline{j}$ given in the definition of $\ca{RK}'_m$  is necessary for ensuring the contractibility of the cells. 
Indeed, if one removes the above condition, then there exists $\delta_m= \begin{tikzpicture}[>=stealth, arrow/.style={->,shorten >=3pt}, point/.style={coordinate}, pointille/.style={draw=red, top color=white, bottom color=red},scale=0.6,baseline=0ex,decoration={markings, mark=at position 0.65 with {\arrow[scale=1.2]{<}}}]%
 \coordinate (A) at (0,0);
 \coordinate (B) at (2,0);
  \draw [decoration={markings, mark=at position 0.65 with {\arrow[scale=1.5,black]{<}}},postaction={decorate}] (A) -- ++(B) node [below,midway,black] {\footnotesize{$m$}};
  \draw [black,fill=black] (A) circle (0.1)  ;
 \draw [black,fill=black] (B) circle (0.1)  ;
 \draw (A) node [above] {$\un{1}$};
 \draw (B) node [above] {$2$};
 \draw ($(B)+(.4,.45)$) node [above] {$\open$};
\end{tikzpicture}%
$
that indexes the space $\SC_m(\open,\close;\open)^{(\delta_m)}$, which turns out to be  homotopic to the $m-2$-sphere (for $m=2$, one has  
$\SC_2(\open,\close;\open)^{(\delta_2)}= \SC_2(\open,\close;\open)^{(\alpha)} \sqcup \SC_2(\open,\close;\open)^{(\beta)}$). 
\\

 Our definition recovers that of Berger when considering the open part and the closed part of $\SC_m$ separately. 
 The main input of our construction, then, resides in the interaction between cubes and semi-cubes.  
 At first sight it could appear useless to consider separating hyperplanes between cubes and semi-cubes since contractibility is not hindered by their relative positions (for example $\SC_m(\open,\close;\open)$ is contractible).  
 However it is important to do so for the operadic composition of the cells.  
This is because, in a configuration of cubes and semi-cubes, if one substitutes a semi-cube, then a cube may appear; the position of such a cube has to be compared with that of the other cubes. 
For instance, consider the following substitution: 
 \begin{equation*}
\begin{tikzpicture}[>=stealth, arrow/.style={->,shorten >=3pt}, point/.style={coordinate}, pointille/.style={draw=black, densely dotted, line width=1.2},scale=.6,baseline=+4ex]
  \draw [black,fill=mygray2] (0,0) rectangle (6,3)  ;
  \draw [black,fill=mygray2] (1,0) rectangle (3,1)  ;
  \draw [black,fill=mygray] (2,1.5) rectangle (3,2.5)  ;
  \draw [black,fill=mygray] (4,.5) rectangle (5,2) ;
  \draw [line width=1.2,-] (0,0) -- (6,0);
  \draw (2,.5) node  {$\underline{1}$};
  \draw (2.5,2) node  {$2$};
  \draw (4.5,1.25) node  {$3$};
  \draw [line width=1.2,-] (3.5,4) -- (3.5,-1);
  \draw (4,4) node  {$H_1$};
  \draw [pointille] (-1,1.25) -- (3.25,1.25);
  \draw (-1,1.6) node  {$H_2$};
\end{tikzpicture}
\circ_{\un{1}}
\begin{tikzpicture}[>=stealth, arrow/.style={->,shorten >=3pt}, point/.style={coordinate}, pointille/.style={draw=red, top color=white, bottom color=red},scale=.6,baseline=4ex]
  \draw [black,fill=mygray2] (0,0) rectangle (6,3)  ;
  \draw [black,fill=mygray2] (1,0) rectangle (3,1)  ;
  \draw [black,fill=mygray] (4,.5) rectangle (5,2) ;
  \draw [line width=1.2,-] (0,0) -- (6,0);
  \draw (2,.5) node  {$\underline{1}$};
  \draw (4.5,1.25) node  {$2$};
\end{tikzpicture}
=
\begin{tikzpicture}[>=stealth, arrow/.style={->,shorten >=3pt}, point/.style={coordinate}, pointille/.style={draw=black, densely dotted, line width=1.2},scale=.6,baseline=4ex]
  \draw [black,fill=mygray2] (0,0) rectangle (6,3)  ;
  \draw [black,fill=mygray2] (1.25,0) rectangle (1.75,.35)  ;
  \draw [black,fill=mygray] (2.35,.25) rectangle (2.65,.75) ;
  \draw [black,fill=mygray] (2,1.5) rectangle (3,2.5)  ;
  \draw [black,fill=mygray] (4,.5) rectangle (5,2) ;
  \draw [line width=1.2,-] (0,0) -- (6,0);
  \draw (1,.5) node  {$\underline{1}$};
  \draw (2.5,2) node  {$3$};
  \draw (4.5,1.25) node  {$4$};
  \draw (3,.5) node  {$2$};
  \draw [line width=1.2,-] (3.5,4) -- (3.5,-1);
  \draw (4,4) node  {$H_1$};
  \draw [pointille] (-1,1.25) -- (3.25,1.25);
  \draw (-1,1.65) node  {$H_2$};
\end{tikzpicture},
\end{equation*}
 then, in the resulting configuration (the right-hand side standard semi-cube), knowing the position of $2$ relative to $3$ and $4$ requires the knowledge of  the position of $\un{1}$ relative to $2$ and $3$ in the first term (the left-hand side standard semi-cube).

   For the contractibility of the cells,  both closed and open cells are known as being contractible; the  same argument as \cite[Theorem 1.16]{Berger:combinatorialmodel}  shows that open/closed cells also are. 
   
   The fact that $\colim_{\alpha \in \ca{RK}'_m(c_1,...,c_k;c)} \SC_m(c_1,...,c_k;c)^{(\alpha)}\cong \SC_m(c_1,...,c_k;c)$, essentially follows from the $2$ following facts.  
   \begin{itemize}
    \item If $x\in \SC_m(c_1,...,c_k;c)$ then there exists an $\alpha \in  \ca{RK}'_m(c_1,...,c_k;c)$ such that $x\in \SC_m(c_1,...,c_k;c)^{(\alpha)}$. This is immediate since any two cubes/semi-cubes are separated by a hyperplane.   For example, if $C_i$ is a semi-cube and $C_j$ a cube, then  $\alpha=(\mu,\sigma)^c$ may be chosen such that  $\mu_{i,j}=m$ and $\sigma_{i,j}= \un{i}\to j$. 
    \item  If  $\alpha=(\mu,\sigma)^c$ and $\beta=(\mu',\sigma')^c$ are incomparable (neither $\alpha\leq \beta$ nor $\alpha\geq \beta$) and are such that there exists  $x=(C_1,...,C_k)\in \SC_m(c_1,...,c_k;c)^{(\alpha)}\cap \SC_m(c_1,...,c_k;c)^{(\beta)}$, then  $x\in \SC_m(c_1,...,c_k;c)^{(\gamma)}$ for a $\gamma$ such that $\gamma < \alpha$ and $\gamma< \beta$.  
   This follows from the observation that, for each $i<j$ such that $(\mu_{i,j},\sigma_{i,j})$ and $(\mu'_{i,j},\sigma'_{i,j})$ are not comparable, there exists  $\mu''_{i,j}< \mu_{i,j}=\mu'_{i,j}$ such that $C_i\square_{\mu''_{i,j}}C_j$ or  $C_j\square_{\mu''_{i,j}}C_i$; let us denote by $\sigma''_{i,j}$ the corresponding orientation. The element $\gamma=(\mu'',\sigma'')^c$ is given by $(\mu''_{i,j},\sigma''_{i,j})$ as above  whenever $i<j$ indexes not comparable components and by $(\mu''_{i,j},\sigma''_{i,j})=\min\{(\mu_{i,j},\sigma_{i,j}),(\mu'_{i,j},\sigma'_{i,j})\}$ otherwise. 
 \end{itemize}

  The other cellular decomposition (indexed by $\cgraph_m$)  is obtained by {exchanging}, in \textbf{Notation} above, the terms \underline{\emph{negative}} and \underline{\emph{positive}}. In what concerns the closed and the open parts, such an exchange is not relevant. However, as we have remarked before, whenever $C_1$ is a semi-cube and $C_2$ is a cube, if $H_m$ exists, then $C_1$ lies in the negative side of $H_m$, hence 
  the condition  $\mu_{i,j}\leq m-1$ if $i\leftarrow \underline{j}$ given in the definition of $\ca{RK}_m$. 
\end{proof}

\section{The operad $\ca{RL}$}\label{sec: RL}
\subsection{Definition of the operad $\ca{RL}$}\label{sec: RL def}

We describe a coloured SC-operad $\ca{RL}$ in the category of sets, $\cat{Set}$.

The operad $\ca{RL}$ has  two natural filtrations by sub operads $\ca{RL}_m$ and $\ca{RL}'_m$ for $m\geq1$. For each $m\geq 1$, we can think of $\ca{RL}_m$  (resp. $\ca{RL}'_m$) as mixes between the sub operads $\ca{L}_m$ and $\ca{L}_{m-1}$ of the lattice paths operad $\ca{L}$ introduced in \cite{Batanin-Berger-Lattice}. 
The operad   $\ca{RL}$ has $2$ types of colours, the closed colours $\mathbb{N}$ and the open  colours $\un{\mathbb{N}}$,  while $\ca{L}$ has one type of colours  $\mathbb{N}$. 
\\

We generalize \cite[Section 2]{Batanin-Berger-Lattice} which serves as a basis for this section. In particular we refer to \textit{loc. cit.} for the definition of the category of bipointed small categories $\cat{Cat}_{*,*}$, ordinals $[n]$  and for the tensor product of ordinals $[i]\ot [j]$. Decorating each object of the ordinal $[n]$ with an underline gives the  \emph{underlined ordinal} $[\un{n}]$.   
One denotes by $\un{\mathbb{N}}$ the set of the natural numbers decorated by an underline; for $\un{n}\in \un{\mathbb{N}}$ and $k\in \mathbb{Z}_{\geq -n}$ one lets $\un{n}+k:=\un{n+k}$. 
Let us denote by $ev:\mathbb{N}\sqcup \un{\mathbb{N}}\to \mathbb{N}$ the map defined by 
$ev(\n)=n$ if $\n=n$ or $\n=\un{n}$.

\paragraph{Definition of $\ca{RL}$.}

The set of colours of $\ca{RL}$ is $Col= Col_{\close} \sqcup Col_{\open}$ 
where $Col_{\close}:=\mathbb{N}$ and $Col_{\open}:=\un{\mathbb{N}}$. Hence, $n\in Col_{\close}$ whereas $\underline{n}\in Col_{\open}$.

For a $(k+1)$-tuple of colours $(\n_1,...,\n_k;\n)$ in $Col$, the set $\ca{RL}(\n_1,...,\n_k;\n)$ is defined as:
\begin{itemize}
 \item the empty set $\emptyset$, if $\n\in Col_{\close}$ and if there is an $i$ such that $\n_i\in Col_{\open}$; 
 \item $\textbf{Cat}_{*,*}([\n+1],[\n_1+1]\ot [\n_2+1]\ot \dots \ot [\n_k+1])$, otherwise. 
\end{itemize}
The substitutions maps are the natural extension to that of $\ca{L}$, that is, they are given by tensor and composition in $\cat{Cat}_{*,*}$. 

\begin{rmq}
Accordingly, one recovers the lattice path operad,  
\begin{align*}
 \ca{RL}(n_1,...,n_k;n)=\ca{L}(n_1,...,n_k;n)= \textbf{Cat}_{*,*}([n+1],[n_1+1]\ot [n_2+1]\ot \dots \ot [n_k+1]),
\end{align*} 
for all $(k+1)$-tuple $(n_1,...,n_k;n)$ of colours in $Col_{\close}=\mathbb{N}$.  
\end{rmq}

For instance, an element $x\in \ca{RL}(n_1,\un{n_2};\no)$ 
is a functor  $x: [\un{n+1}] \to [n_{1}+1]\ot [\un{n_2+1}]$  that sends $(0,\un{n+1})$ on $((0,\un{0}),(n_{1}+1,\un{n_2+1}))$ and is determined by the image of the $n$ remaining objects of $[\un{n+1}]$ and the morphisms into the lattice $[n_{1}+1]\ot [\un{n_2+1}]$. 

\begin{exple}\label{ex: lattice 1}
 The following lattice path $x$ belongs to $\ca{RL}(3,\un{2};\un{3})$:
\begin{equation}\label{eq: lattice ex} 
 \scalebox{0.75}{
\begin{tikzpicture}[>=stealth,thick,draw=black!55, arrow/.style={->,shorten >=1pt}, point/.style={coordinate}, pointille/.style={draw=black!43, top color=white, bottom color=gray},scale=0.8,baseline=1ex]
\matrix[row sep=5mm,column sep=5mm,ampersand replacement=\&]
{
 \node (30) {$(0,\un{3})$};		\& \node (31){} ; 		\&\node (32) {};		\& \node (33) {$\bullet$};	\& \node (34) {$x(\un{4})$};\\
 \node (20) {$\cdot$};		\& \node (21){$\bullet$} ;  	\&\node (22) {$\bullet$};	\& \node (23) {$x(\un{2})=x(\un{3})$};	\& \node (24) {}; \\
 \node (10) {};		\& \node (11){$x(\un{1})$} ;  	\&\node (12) {};		\& \node (13) {};		\& \node (14) {};\\
 \node (00) {$x(\un{0})$};	\& \node (01){$\bullet$} ;  	\&\node (02) {};		\& \node (03) {};		\& \node (04) {$(4,\un{0})$};\\
}; 
\path
   	  (00)     edge[above,->]      node {$1$}  (01)
   	  (01)     edge[left,->]      node {$\un{2}$}  (11)
   	  (11)     edge[left,->]      node {$\un{2}$}  (21)
   	  (21)     edge[above,->]      node {$1$}  (22)
   	  (22)     edge[above,->]      node {$1$}  (23)
   	  (23)     edge[left,->]      node {$\un{2}$}  (33)
   	  (33)     edge[above,->]      node {$1$}  (34)
   	  (00)     edge[pointille,-]      node {}  (30)
   	  (21)     edge[pointille,-]      node {}  (31)
   	  (22)     edge[pointille,-]      node {}  (32)
   	  (02)     edge[pointille,-]      node {}  (22)
   	  (03)     edge[pointille,-]      node {}  (23)
   	  (04)     edge[pointille,-]      node {}  (34)
   	  (01)     edge[pointille,-]      node {}  (04)
   	  (11)     edge[pointille,-]      node {}  (14)
   	  (23)     edge[pointille,-]      node {}  (24)
   	  (10)     edge[pointille,-]      node {}  (11)
   	  (20)     edge[pointille,-]      node {}  (21)
   	  (30)     edge[pointille,-]      node {}  (33)
 	  ; 
\end{tikzpicture}}
\end{equation}
\end{exple}
\paragraph{Integer-strings representation.} 
In \cite[paragraph 2.2]{Batanin-Berger-Lattice} a description of $\ca{L}$  in terms of integer-strings is given. 
One has the obvious similar bijective correspondence  for $\ca{RL}$, where one considers natural numbers and underlined natural numbers; the latter correspond to the open colours. 
 Additionally, we put an extra label according to the nature of the output colour. 
Explicitly, given a lattice path $x\in \ca{RL}(\n_1,...,\n_k;\n)$,  one runs through it starting from $x(0)$ (or $x(\un{0})$) and ending at $x(\n+1)$. Along the way, each time one meets an edge that is  parallel to the $i$-axis one writes down $i$   if $\n_i\in Col_{\close}$ and $\un{i}$  if $\n_i\in Col_{\open}$; one writes down a vertical bar each time one meets an  $x(a)$ for $1\leq ev(a)\leq ev(\n)$. One adds an $\open$ if the output colour belongs to $Col_{\open}$. 
 
 The $ev(\n)$ vertical bars of the integer-string $x$ subdivide it into $ev(\n)+1$ (possibly empty) \emph{substrings}. 
 The substrings are indexed by $[ev(\n)]$. 
 \begin{exple}\label{ex: integer-string}
 The lattice path $x$ from \eqref{eq: lattice ex} corresponds to the integer-string $(1\un{2}|\un{2}11||\un{2}1)^{\open}$.
\end{exple}
 \begin{exple}
 $(121)^{\open}\in \ca{RL}(1,0;\un{0})$  whereas $ (121)\in \ca{RL}(1,0;0)$. 
\end{exple}

Let us exhibit the corresponding composition on integer-string representations via an example.
\begin{exple}
 \begin{equation*}
 (1\un{2}|1\un{4}\un{2}31||\un{2}\un{4})^{\open}\circ_{\un{2}} (1\un{3}|21\un{3}|\un{3}1)^{\open}=(1{2\un{4}}|1\un{6}{32\un{4}}51||{\un{4}2}\un{6})^{\open}.
\end{equation*}
We have renumbered the integer-string $(1\un{2}|1\un{4}\un{2}31||\un{2}\un{4})^{\open}$ by increasing the integers greater than $\un{2}$ by $2$ (=one less than the number of integers in the second integer-string): $(1\un{2}|1\un{6}\un{2}51||\un{2}\un{6})^{\open}$. We have increased the integers of the second integer-string by $1$ (= one less than the value of $\un{2}$):  $(2\un{4}|32\un{4}|\un{4}2)^{\open}$. Finally, we have has substituted the three occurrences of $\un{2}$ by the three substrings  $2\un{4}$,  $32\un{4}$ and  $\un{4}2$. 
\end{exple}

The action of the symmetric group $\sigma\cdot x\in \ca{RL}(\n_{\sigma^{-1}(1)},...,\n_{\sigma^{-1}(k)};\n)$ is obtained by permuting the number $i$ (resp. $\un{i}$) of the string-integer representation of $x$ by the number $\sigma(i)$ (resp. $\un{\sigma(i)}$). 
\begin{exple}
For $x=(1\un{2}|3\un{2}11||\un{2}1)^{\open}$ and $\sigma(1)=2$, $\sigma(2)=3$, $\sigma(3)=1$, 
one has $\sigma\cdot x=(2\un{3}|1\un{3}22||\un{3}2)^{\open}$. 
\end{exple}

\paragraph{The underlying category of $\ca{RL}$.} 
It follows directly from \cite[paragraph 2.4]{Batanin-Berger-Lattice} and the definition of $\ca{RL}$ that 
the two underlying sub-categories $(\ca{RL})_u^{\close}$ and $(\ca{RL})_u^{\open}$ are (canonically isomorphic to) the simplicial category $\triangle$. 
Thus, for each $k\geq 0$, one has a functor
\begin{align*}
 \ca{RL}(-,...,-;-) : (\triangle^{\text{op}})^k\times \triangle \to \cat{Set},
\end{align*} 
that is, a multisimplicial/cosimplicial set.

\paragraph{Dual interpretation of $\ca{RL}$.}\label{sec: dual interp}
For later use, let us mention that $\ca{RL}$ has a dual interpretation as given in \cite[Lemma 2.7]{Batanin-Berger-Lattice}. In particular, an element $x\in \ca{RL}(\n_1,...,\n_k;\n)$ determines a $k$-tuple $(x_1,...,x_k)\in \triangle([\n_1],[\n])\times \cdots \times \triangle([\n_k],[\n])$. 
The value of $x_i(r)$ corresponds to the substring of the integer-string $x$ in which the $r+1$-th occurrence of $i$ (or $\un{i}$)  appears.  We refer to \textit{loc. cit.} for more details.  
\begin{exple}
The integer-string $x$  from Example \ref{ex: integer-string} determines the pair $(x_1,x_2)\in \triangle([3],[\un{3}])\times \triangle([\un{2}],[\un{3}])$ given by 
  $x_1:(0,1,2,3)\mapsto (\un{0},\un{1},\un{1},\un{3}) \text{ and } x_2:(\un{0},\un{1},\un{2})\mapsto (\un{0},\un{1},\un{3})$. 
\end{exple}

\paragraph{Filtrations of $\ca{RL}$ by sub operads.}

Let us define two maps 
\begin{equation*}
 c_{i,j},\hat{c}_{i,j}: \ca{RL}(\n_1,...,\n_k;\n)\to \mathbb{N}.
\end{equation*}
The map $ c_{i,j}$ is defined as in \cite[paragraph 2.9]{Batanin-Berger-Lattice} and will be used to defined the complexity index for the closed and the open parts of $\ca{RL}$. The map $\hat{c}_{i,j}$ will be used to control the interaction between the closed and the open part. 
\\

For $1\leq i<j\leq k$, we denote by  
\begin{equation*}
 \phi_{ij}:\ca{RL}(\n_1,...,\n_k;\n)\to \ca{RL}(\n_i,\n_j;\n)
\end{equation*}
the projection induced by the canonical projection 
\begin{equation*}
 p_{ij}:  [\n_1+1]\ot \cdots \ot [\n_k+1] \to [\n_i+1]\ot [\n_j+1].
\end{equation*}

For  $x\in \ca{RL}(\n_1,...,\n_k;\n)$ and $1\leq i<j\leq k$, we define $c_{ij}(x)$ as the number of changes of directions in the lattice paths $\phi_{ij}(x)$. 

The second number $\hat{c}_{i,j}(x)$ is defined as follows. 
Recall that if $x\in \ca{RL}(\n_1,...,\n_k;\n)$, then its integer-string representation is, in particular, a sequence of numbers (underlined or not) between $1$ and $k$. For $1\leq i\leq k$, we set $i^-$ (resp. $\un{i}^-$) to be the first occurrence of $i$ (resp. $\un{i}$) in the integer-string representation. Equivalently, $i^-$ (resp. $\un{i}^-$) is the first edge of the lattice $x$ which is in the $i$-th direction. 
We write $r^-<s^-$ if the element $r^-$  precedes $s^-$. 

For $1\leq i<j\leq k$, we set:
\begin{equation}\label{eq: filtra}
           \hat{c}_{i,j}(x)= \begin{cases}
            c_{i,j}(x)  &\text{ if } ~i^-<\un{j}^-;\\
	    c_{i,j}(x)+1 &\text{ if } ~i^->\un{j}^-;\\
	    c_{i,j}(x)+1  &\text{ if } ~\un{i}^-<j^-;\\
	    c_{i,j}(x) &\text{ if } ~\un{i}^->j^-.
           \end{cases}
\end{equation}

For $m\geq 1$, we define $\ca{RL}_m(\n_1,...,\n_k;\n)$ as the set of elements $x \in \ca{RL}(\n_1,...,\n_k;\n)$ satisfying the three conditions:
\begin{align*}
 \max_{(i,j)} c_{i,j}(x)\leq m,~~~ 
 \max_{(\un{i},\un{j})} c_{i,j}(x)\leq m-1 ~ \text{ and } 
 \max_{(\un{i},j)\text{ or }(i,\un{j})} \hat{c}_{i,j}(x) \leq m.
\end{align*}

  Changing the number defined in \eqref{eq: filtra} to:
  \begin{equation}\label{eq: filtra2}  
           \hat{c}'_{i,j}(x)= \begin{cases}
            c_{i,j}(x)+1  &\text{ if } ~i^-<\un{j}^-;\\
	    c_{i,j}(x) &\text{ if } ~i^->\un{j}^-;\\
	    c_{i,j}(x)  &\text{ if } ~\un{i}^-<j^-;\\
	    c_{i,j}(x)+1 &\text{ if } ~\un{i}^->j^-,
           \end{cases}
\end{equation}
provides  another filtration of $\ca{RL}$ by sub-operads $\ca{RL}'_m$. Explicitly,   
 $\ca{RL}'_m(\n_1,...,\n_k;\n)$ is the set of elements $x \in \ca{RL}(\n_1,...,\n_k;\n)$ satisfying the three conditions:
\begin{align*}
 \max_{(i,j)} c_{i,j}(x)\leq m,~~~ 
 \max_{(\un{i},\un{j})} c_{i,j}(x)\leq m-1 ~ \text{ and } 
 \max_{(\un{i},j)\text{ or }(i,\un{j})} \hat{c}'_{i,j}(x) \leq m.
\end{align*}

A relation between these two filtrations and the two cellular decompositions of $\ca{SC}_m$ from Theorem  \ref{th: decompo} is given in the next section.

\subsection{The operad $\Coend_{\ca{RL}_m}(\delta_{})$ as an SC type operad}

Given a functor $\delta:\catsimp\to \cat{C}$ where $\cat{C}$ is a \emph{monoidal model category}, by following the method developed in \cite[Sections 3.5-3.6]{Batanin-Berger-Lattice}, we construct a zig-zag of weak equivalences of operads 
  \begin{equation}\label{eq: zig-zag}
  \Coend_{\ca{RL}_m}(\delta) \longleftarrow \Coend_{\widehat{\ca{RL}}_m}(\delta) \longrightarrow \Ba_{\delta} \cgraph_{m},
\end{equation}
whenever $\delta$ satisfies some conditions. Here, $\Ba_{\delta} \ca{A}$ denotes the $\delta$-realization of the nerve of the category $\ca{A}$. 
The intermediate operad $\widehat{\ca{RL}}_m$ is defined using homotopy colimits in $\cat{C}$ applied on a decomposition of $\ca{RL}_m$ indexed by $\cgraph_{m}$.  
\\

We use most of the material from and the same conventions as in \cite[Sections 3.5-3.6]{Batanin-Berger-Lattice}. 
In particular, we require $\delta$ to be a  \emph{standard system of simplices}. This confers on  homotopy colimits good properties, such as  compatibility with the symmetric monoidal structure of $\cat{C}$. 
The following functors $\delta_{yon}$, $\delta_{\text{Top}}$ and $\delta_{\ring}$ are such standard system of simplices.  

Let 
\begin{equation*}
\scalebox{1}{
\begin{tikzpicture}[>=stealth,thick,draw=black!50, arrow/.style={->,shorten >=1pt}, point/.style={coordinate}, pointille/.style={draw=red, top color=white, bottom color=red},scale=0.8,baseline=-.6ex]
\matrix[row sep=15mm,column sep=12mm,ampersand replacement=\&]
{
 \node (00) {$\delta_{Top}: \triangle$}; \& \node (01){$\cat{Set}^{\catsimpop}$} ;\& \node (02){$\cat{Top}$} ;\\
}; 
\path
	  
   	  (00)     edge[above,->]      node {$\delta_{yon}$}  (01)
 	  (01)     edge[above,arrow,->]      node {$|-|$}  (02)
 	  ; 
\end{tikzpicture}}
\text{ and } 
\scalebox{1}{
\begin{tikzpicture}[>=stealth,thick,draw=black!50, arrow/.style={->,shorten >=1pt}, point/.style={coordinate}, pointille/.style={draw=red, top color=white, bottom color=red},scale=0.8,baseline=-.6ex]
\matrix[row sep=15mm,column sep=12mm,ampersand replacement=\&]
{
 \node (00) {$\delta_{\ring}:\triangle$}; \& \node (01){\cat{Set}$^{\catsimpop}$} ;\& \node (02){$\cat{Ch}(\ring)$} ;\\
}; 
\path
	  
   	  (00)     edge[above,->]      node {$\delta_{yon}$}  (01)
 	  (01)     edge[above,arrow,->]      node {$C_*(-;\ring)$}  (02)
 	  ; 
\end{tikzpicture}} 
\end{equation*}
be the two functors where:
\begin{itemize}
 \item $\delta_{yon}([n])=\Hom_{\triangle}(-,[n])$ is the Yoneda functor;
 \item $|-|:\cat{Set}^{\catsimpop} \to \cat{Top}$ is the geometric realization;  and,
 \item $C_*(-;\ring): \cat{Set}^{\catsimpop} \to \cat{Ch}(\ring)$ is the normalized chain complex. 
\end{itemize}

Let us recall  that for $\delta=(\delta_{\close},\delta_{\open})$ with $\delta_{\close},\delta_{\open}: \catsimp \to \cat{C}$, the functor $ \xi(\ca{RL}_m)_{c_1,...,c_k;c}(\delta)$  denotes the $\delta$-realization: 
\begin{align*}
 \xi(\ca{RL}_m)_{c_1,...,c_k;c}(\delta)(n)= \ca{RL}_m(\underbrace{-,\dots,-}_k;n)\ot_{\catsimp^k} \delta_{c_1}(-)\ot \cdots \ot \delta_{c_k}(-),
\end{align*}
where we use implicitly the strong monoidal functor $\cat{Set}\to \cat{C}$,  $E\mapsto \coprod_{e\in E} 1_{\cat{C}}$.

We use the same functor  $\delta_{\close}=\delta_{\open}$ and we denote it by $\delta$. This way, by using  condensation from Section \ref{sec: condensation}, we define the SC type operad $\Coend_{\ca{RL}_m}(\delta)$, and similarly for $\Coend_{\ca{RL}'_m}(\delta)$. 
\\

The idea for providing  the zig-zag \eqref{eq: zig-zag} follows that of \eqref{lem: weak equiv BA X} and relies on 
a decomposition of $\xi(\ca{RL}_m)_{c_1,...,c_k;c}(\delta)$ by ``cells'' $\xi(\ca{RL}_m)_{\alpha}(\delta)$ indexed by the $\alpha\in \cgraph_m(c_1,...,c_k;c)$. 
 Under some properties, the right-sided weak equivalence results from ``the contraction of the cells'', that is, from weak equivalences  $\xi(\ca{RL}_m)_{\alpha}(\delta)\to I$, where $I$ denotes the constant cosimplicial object $I^n=1_{\cat{C}}$; the left-sided  weak equivalence is induced by the natural map $\hocolim \xi(\ca{RL}_m)_{\alpha}(\delta)\to \colim \xi(\ca{RL}_m)_{\alpha}(\delta)$.

In fact, Batanin and Berger \cite[Theorem 3.8]{Batanin-Berger-Lattice} show that, in the closed case, \eqref{eq: zig-zag} holds provided that $\ca{L}_m$ is \emph{strongly $\delta$-reductive}. 
Here we extend their result to $\ca{RL}_m$. For consistency we recall the notion of \emph{strong $\delta$-reductivity} in our context.  Let us also recall  that a weak equivalence in $\cat{C}$ is called \emph{universal} if any pullback of it is again a weak equivalence. 

\begin{de}
 Let $\delta$ be a standard system of simplices in $\cat{C}$. 
The operad $\ca{RL}_m$ is called \emph{$\delta$-reductive} if  for any $n\geq 0$ and  $k\geq 0$ and any colours $c_i,c\in \{\close; \open\}$ satisfying \eqref{eq: col cond}, the map $\xi(\ca{RL}_m)_{c_1,...,c_k;c}(\delta)^n\to \xi(\ca{RL}_m)_{c_1,...,c_k;c}(\delta)^0$ is a universal weak equivalence. 

 The operad $\ca{RL}_m$ is called \emph{strongly $\delta$-reductive} if in addition the induced  maps \\
 $\Coend_{\ca{RL}_m}(\delta)(c_1,...,c_k;c)\to \xi(\ca{RL}_m)_{c_1,...,c_k;c}(\delta)^0$ are universal weak equivalences in $\cat{C}$.
\end{de}

\begin{theo}\label{th: strong red}
 Let $\delta$ be a standard system of simplices in $\cat{C}$. 
 If the operad $\ca{RL}_m$ (resp. $\ca{RL}'_m$) is strongly $\delta$-reductive, then the operad $\Coend_{\ca{RL}_m}(\delta)$ (resp. $\Coend_{\ca{RL}'_m}(\delta)$) is  weakly equivalent to $\Ba_{\delta} \cgraph_m$ (resp.  $\Ba_{\delta} \cgraph'_m$). 
\end{theo}

\begin{proof}
 The cells $\xi(\ca{RL}_m)_{\alpha}(\delta)$ are obtained via a map 
\begin{equation*}%
c_{tot}: \ca{RL}_m(\n_1,...,\n_k;\n)\to \ca{RK}_m(c_1,...,c_k;c),
\end{equation*}
 for  each $(k+1)$-tuple of colours $(c_1,...,c_k;c)$ in $\{\close,\open\}$ and $\n_i\in Col_{c_i}$, $\n\in Col_{c}$. 
Such a map is defined in \eqref{eq: ctot def} below and recovers the maps from  \cite[Proposition 3.4]{Batanin-Berger-Lattice} in the closed case (\ie $c=\close$). 
Note that  there is a slight inaccuracy in \cite{Batanin-Berger-Lattice} since, as we will show in Example \ref{ex: contre-exemple} and Lemma \ref{lem: filtr},  including in the closed case, the map $c_{tot}$ is not a morphism of coloured operads but instead  satisfies  
\begin{equation}\label{eq: ineg}
c_{tot}(x\circ_i y)\leq c_{tot}(x) \circ_i c_{tot}( y). 
\end{equation} 
However, such an inequality is sufficient to apply the method developed in \cite[Sections 3.5-3.6]{Batanin-Berger-Lattice}. 
Indeed, for $\alpha\in \cgraph_{m}(c_1,...,c_k;c)$ and $\n_i\in Col_{c_i}$, $\n\in Col_{c}$, let us define 
\begin{equation}\label{eq: cell RL}
 (\ca{RL}_m)_{\alpha}(\n_1,\dots,\n_k;\n):=\{x\in \ca{RL}_m(\n_1,\dots,\n_k;\n)~|~ c_{tot}(x)\leq \alpha\}. 
\end{equation}
It follows that 
 $\ca{RL}_m(\n_1,\dots,\n_k;\n)=\colim_{\cgraph_{m}(c_1,...,c_k;c)}(\ca{RL}_m)_{\alpha}(\n_1,\dots,\n_k;\n)$;  
the inequality \eqref{eq: ineg} ensures the compatibility of the decomposition with the operadic structures, so that it  implies that this is an equality of coloured operads. 
Moreover,  the operad $\widehat{\ca{RL}}_m$, defined as 
\begin{equation*}
 \widehat{\ca{RL}}_m(\n_1,\dots,\n_k;\n)=\hocolim_{\cgraph_{m}(c_1,...,c_k;c)}(\ca{RL}_m)_{\alpha}(\n_1,\dots,\n_k;\n),
\end{equation*} 
 is an operad (again, because of \eqref{eq: ineg} and because of the compatibility of $\hocolim$ with symmetric monoidal structure). 
It turns out that \eqref{eq: cell RL} form a multisimplicial sub-complex of $\ca{RL}_m(-,...,-;\n)$, so it makes sense to take its $\delta$-realization $\xi(\ca{RL}_m)_{\alpha}(\delta)$. In fact, as cosimplicial objects,   
$ \xi(\ca{RL}_m)_{c_1,...,c_k;c}(\delta)=\colim_{\cgraph_{m}(c_1,...,c_k;c)}\xi(\ca{RL}_m)_{\alpha}(\delta)$ 
and  $\xi(\widehat{\ca{RL}}_m)_{c_1,...,c_k;c}(\delta)$ identifies with $\hocolim_{\cgraph_{m}(c_1,...,c_k;c)}\xi(\ca{RL}_m)_{\alpha}(\delta)$.

From these considerations, it is  straightforward to verify that the proof of \cite[Theorem 3.8]{Batanin-Berger-Lattice} generalizes to our case: the $\delta$-reductivity implies that $\xi(\ca{RL}_m)_{\alpha}(\delta)\to I$ is a weak equivalence; and, 
via the strongly $\delta$-reductivity, the left-sided weak equivalence of the zig-zag \eqref{eq: zig-zag} results from the weak equivalence $\hocolim \xi(\ca{RL}_m)_{\alpha}(\delta)^0\to \colim \xi(\ca{RL}_m)_{\alpha}(\delta)^0$.
\end{proof}

\begin{prop}\label{prop: weak eq}
 For $\delta$ being $\delta_{Top}$ or $\delta_{\ring}$, the operads $\ca{RL}_m$ and $\ca{RL}'_m$ are strongly $\delta$-reductive. 
 Consequently, the operads $\Coend_{\ca{RL}_m}(\delta)$ and $\Coend_{\ca{RL}'_m}(\delta)$ are weakly equivalent to the topological (resp. chain) Swiss Cheese operad $\SC_m$ (resp. $C_*\SC_m$) for $\delta$ being $\delta_{Top}$ (resp. $\delta_{\ring}$).
\end{prop}
\begin{proof}
  Again, this is a straightforward generalization of \cite[Examples 3.10]{Batanin-Berger-Lattice}, where it is shown that 
  $\ca{L}_m$ is strongly $\delta$-reductive for $\delta$ being $\delta_{Top}$ or $\delta_{\ring}$. The very same method applies in our context, by considering  $\xi(\ca{RL}_m)_{c_1,...,c_k;c}(\delta)$ instead of $\xi(\ca{L}_m)_{k}(\delta)$. 
\end{proof}

For a $(k+1)$-tuple $(c_1,...,c_k;c)$ of colours in $\{\close,\open\}$ and $\n_i\in Col_{c_i}$, $\n\in Col_{c}$, let 
\begin{equation*}%
 c_{tot}: \ca{RL}_m(\n_1,...,\n_k;\n)\to \ca{RK}_m(c_1,...,c_k;c),
\end{equation*}
be as follows.
Recall the notation $\wid{c}_i$ from \eqref{eq: notation ctilde}. 
 The element $c_{tot}(x)=(\mu,\sigma)\in \cgraph_m$ is defined, for  $1\leq i<j\leq  k$, by: 
\begin{equation}\label{eq: ctot def}
 \mu_{i,j}=c_{i,j}(x)  ~\text{ and }~
\sigma_{i,j}=
 \begin{cases}
  \widetilde{c}_i \to \widetilde{c}_j &~\text{ if }~ \wid{c}_i^-<\wid{c}_j^- \\
 \widetilde{c}_i \leftarrow \widetilde{c}_j &~\text{ if }~ \wid{c}_i^->\wid{c}_j^-.
 \end{cases}
\end{equation}  
Similarly, let $c'_{tot}: \ca{RL}'_m\to \cgraph'_m$ be the map defined by  the formula \eqref{eq: ctot def} for  $x\in \ca{RL}'_m$. 

\begin{lem}\label{lem: filtr}
For all $ x,y\in \ca{RL}_m$  and $i$ such that  $x\circ_i y$ makes sense, one has the following inequality 
$c_{tot}(x\circ_i y)\leq c_{tot}(x) \circ_i c_{tot}( y)$. %
For all $ x,y\in \ca{RL}'_m$  and $i$ such that  $x\circ_i y$ makes sense, one has the  following inequality   
$c'_{tot}(x\circ_i y)\leq c'_{tot}(x) \circ_i c'_{tot}( y)$. %
\end{lem}
\begin{proof}
We show the first assertion, the second one is similar. In the following arguments the type of colours does not matter, so  we abusively forget about the underline. 
Let $x\in \ca{RL}_m(\n_1,\dots,\n_p;\n)$ and $y\in \ca{RL}_m(f_1,\dots,f_q;\n_i)$ for some $1\leq i\leq p$. 

For an integer $a\neq i$, one defines $a':=a$  if $a<i$ and $a':=a+q-1$ if $a>i$. \\ 
   Recall that, by definition,  the complete graph  $(\mu,\sigma)=c_{tot}(x) \circ_i c_{tot}(y)$ is given by:  
   \begin{align*}
   (\mu_{k+i-1,l+i-1},\sigma_{k+i-1,l+i-1}) &=(c_{k,l}(y),\sigma_{k,l}(c_{tot}(y))) \text{ for } 1\leq k<l\leq q;\\
   (\mu_{r',s'},\sigma_{r',s'})&=(c_{r,s}(x),\sigma_{r,s}(c_{tot}(x))) \text{ for } 1\leq r<s\leq p \text{ such that } i\notin \{r,s\}; \\
   (\mu_{r,k+i-1},\sigma_{r,k+i-1})& =(c_{r,i}(x),\sigma_{r,i}(c_{tot}(x))) 
   \text{ for } 1\leq r< i \text{ and } 1\leq k\leq q;\\
   (\mu_{k+i-1,s'},\sigma_{k+i-1,s'})& =(c_{i,s}(x),\sigma_{i,s}(c_{tot}(x))) 
   \text{ for } i< s\leq p \text{ and } 1\leq k\leq q.
   \end{align*}
On the other hand, let us recall that the integer-string $y$ is subdivided in $n_i+1$ substrings that are delimited by the $n_i$ vertical bars ($n_i:=ev(\n_i)$). Recall that the integer-string $x\circ_i y$ is obtained by substituting the $b$-th occurrence of $i$ by the $b$-th substring (indexed by $b-1$) of $y$ together with a re-indexation of the values. 
 It follows that $c_{k+i-1,l+i-1}(x\circ_i y)= c_{k,l}(y)$ and that the order in which the pair $((k+i-1)^-,(l+i-1)^-)$ appears in $x\circ_i y$ is the same as the order in which the pair $(k^-,l^-)$ appears in $y$. 
Similarly, one has $c_{r',s'}(x\circ_i y)= c_{r,s}(x)$, and the order in which $(r'^-,s'^-)$ appears in $x\circ_i y$ is the same than the order in which the pair $(r^-,s^-)$ appears  in $x$.

Moreover, it is straightforward to see that $c_{r,k+i-1}(x\circ_i y)\leq c_{r,i}(x)$ and that the equality holds if and only if $k$ is present in, at least, one substring per switch between $r$ and $i$. Such an equality  is  illustrated below: there is at least one $k$ in each of the $c_{r,i}(x)$ blocks of substrings
\begin{equation*}\label{eq: compo substring}
\scalebox{.8}{\begin{tikzpicture}[>=stealth, arrow/.style={->,shorten >=3pt}, point/.style={coordinate}, pointille/.style={draw=red, top color=white, bottom color=red},scale=1.6,baseline=-4ex]
 \draw [->] (-1.8,-.35) to (-1.65,-.1);
 \draw [->] (-1.25,-.3) to (-1.3,-.1);
 \draw [->] (-.35,-.5) to (-0.55,-.1);
 \draw [->] (.25,-.5) to (0.05,-.1);
 \draw [->] (1.25,-.5) to (1.3,-.1);
 \draw [->] (2.2,-.35) to (2.15,-.1);
 \draw (0,0) node [] {{$x=(~i^-~i~~\cdots~~ i~~r^-~i~~r~~\cdots ~~r~ ~i~~\cdots ~~i~)$}};
 \draw (0,-.85) node [] {{$\phantom{,}y=(\underbrace{\overbrace{\phantom{k}}^0|\overbrace{{k^-}}^1|\cdots |\overbrace{\phantom{k}}^{}}_{1}|\underbrace{\overbrace{\vphantom{p}k}^{}}_{2}|\cdots |\underbrace{\overbrace{k}^{}|\cdots |\overbrace{k}^{n_i}}_{c_{r,i}(x)})$;}};
  \end{tikzpicture}}
  \end{equation*}
  here, only the integers $r$, $i$ and $k$ are written; the arrows indicate the substitutions. 
   
   In the equality case, the order in which  $(r^-,(k+i-1)^-)$ appears in $x\circ_i y$ is the same as the order in which  $(r^-,i^-)$ appears in $x$. 
Similar  considerations hold for the inequality $c_{k+i-1,s'}(x\circ_i y)\leq  c_{i,s}(x)$. 
   
It follows from the above paragraphs that  $c_{tot}(x\circ_i y)\leq c_{tot}(x) \circ_i c_{tot}( y)$. 
\end{proof}

\begin{exple}\label{ex: contre-exemple}
Here is an example of $x$ and $y$ such that $c_{tot}(x\circ_i y)< c_{tot}(x) \circ_i c_{tot}( y)$. 
For  $x=(121|1)$ and $y=(1|12|2)$, one has $(121|1) \circ_1 (1|12|2) = (1312|2) $ and  
\begin{equation*}
c_{tot}(x\circ_1 y)=
\begin{tikzpicture}[>=stealth, arrow/.style={->,shorten >=3pt}, point/.style={coordinate}, pointille/.style={draw=red, top color=white, bottom color=red},scale=0.6,baseline=0.9ex,decoration={markings, mark=at position 0.65 with {\arrow[scale=1.2]{>}}}]%
 \coordinate (A) at (0,0);
 \coordinate (B) at (2,0);
 \coordinate (C) at (1,1.25);
  \draw [postaction={decorate}] (A) -- ++(B) node [below,midway] {\footnotesize{$1$}};
  \draw [postaction={decorate}] (A) -- (C) node [left,midway] {\footnotesize{$2$}}; 
  \draw [decoration={markings, mark=at position 0.65 with {\arrow[scale=1.5,blue]{>}}},postaction={decorate}] (C) -- (B) node [right,midway,blue] {\footnotesize{$1$}};  
  \draw [black,fill=black] (A) circle (0.1)  ;
 \draw [black,fill=black] (B) circle (0.1)  ;
 \draw [black,fill=black] (C) circle (0.1)  ;
 \draw (A) node [left] {$1$};
 \draw (B) node [right] {$2$};
 \draw (C) node [above] {$3$};
\end{tikzpicture}%
\leq 
\begin{tikzpicture}[>=stealth, arrow/.style={->,shorten >=3pt}, point/.style={coordinate}, pointille/.style={draw=red, top color=white, bottom color=red},scale=0.6,baseline=0.9ex,decoration={markings, mark=at position 0.65 with {\arrow[scale=1.2]{>}}}]%
 \coordinate (A) at (0,0);
 \coordinate (B) at (2,0);
 \coordinate (C) at (1,1.25);
  \draw [postaction={decorate}] (A) -- ++(B) node [below,midway] {\footnotesize{$1$}};
  \draw [postaction={decorate}] (A) -- (C) node [left,midway] {\footnotesize{$2$}}; 
  \draw [postaction={decorate}] (B) -- (C) node [right,midway] {\footnotesize{$2$}};  
  \draw [black,fill=black] (A) circle (0.1)  ;
 \draw [black,fill=black] (B) circle (0.1)  ;
 \draw [black,fill=black] (C) circle (0.1)  ;
 \draw (A) node [left] {$1$};
 \draw (B) node [right] {$2$};
 \draw (C) node [above] {$3$};
\end{tikzpicture}%
=
\begin{tikzpicture}[>=stealth, arrow/.style={->,shorten >=3pt}, point/.style={coordinate}, pointille/.style={draw=red, top color=white, bottom color=red},scale=0.6,baseline=0ex,decoration={markings, mark=at position 0.65 with {\arrow[scale=1.2]{>}}}]%
 \coordinate (A) at (0,0);
 \coordinate (B) at (2,0);
  \draw [postaction={decorate}] (A) -- ++(B) node [below,midway] {\footnotesize{$2$}};
  \draw [black,fill=black] (A) circle (0.1)  ;
 \draw [black,fill=black] (B) circle (0.1)  ;
 \draw (A) node [above] {$1$};
 \draw (B) node [above] {$2$};
\end{tikzpicture}%
\circ_1
\begin{tikzpicture}[>=stealth, arrow/.style={->,shorten >=3pt}, point/.style={coordinate}, pointille/.style={draw=red, top color=white, bottom color=red},scale=0.6,baseline=0ex,decoration={markings, mark=at position 0.65 with {\arrow[scale=1.2]{>}}}]%
 \coordinate (A) at (0,0);
 \coordinate (B) at (2,0);
  \draw [postaction={decorate}] (A) -- ++(B) node [below,midway] {\footnotesize{$1$}};
  \draw [black,fill=black] (A) circle (0.1)  ;
 \draw [black,fill=black] (B) circle (0.1)  ;
 \draw (A) node [above] {$1$};
 \draw (B) node [above] {$2$};
\end{tikzpicture}%
=c_{tot}(x)\circ_1 c_{tot}(y).
 \end{equation*} 
 \end{exple}

\subsection{Action on cochains}

One has the obvious relative version of \cite[Proposition 2.20]{Batanin-Berger-Lattice}: 
\begin{prop}\label{prop: action cosimpl abelian}
 Let $X$ and $Y$ be two simplicial sets equipped with a simplicial map $f:Y\to X$. 
 The pair $(\ring[X],\ring[Y])$ { (resp. $(\Hom(\ring[X],\ring),\Hom(\ring[Y],\ring))$) } is a coalgebra (resp. an algebra) over $\ca{RL}$. In particular, $\Coend_{\ca{RL}}(\delta_{\ring})$ acts on $(C^*(X;\ring),C^*(Y;\ring))$. 
\end{prop}
\begin{proof}
The $\ca{RL}$-coaction in the closed case is as in \cite[Proposition 2.20]{Batanin-Berger-Lattice}. 
Otherwise 
\[
\ring[\ca{RL}(\n_1,...,\n_k;\un{n})] \ot \ring[Y_n]\to \text{tensor products of $\ring[X_{n_i}]$ and $\ring[Y_{n_j}]$}, 
\]
is given  by $x\ot y\mapsto \text{tensor products of } x^*_i(y)$ and $f(x^*_j(y))$, where the $x_i$ and $x_j$ are the components of the dual interpretation of $x$ from Section \ref{sec: dual interp} \ie $x$ corresponds to the $k$-tuple $(x_1,...,x_k)\in \triangle([\n_1],[\n])\times ...\times \triangle([\n_k],[\n])$. 
Since $f$ is a map of simplicial sets the result directly follows from \textit{loc. cit.}
\end{proof}

\subsection{Action on iterated relative loop spaces}\label{sec: cosimpl rel loop space}

In this section we show that, for any two topological spaces $Y\subset X$ pointed at the same point $*$, the operads $\Coend_{\ca{RL}_m}(\delta_{Top})$ and $\Coend_{\ca{RL}'_m}(\delta_{Top})$ acts on the pair $(\Omega^mX,\Omega^m(X,Y))$. 
Here,  $\Omega^mX$ denotes the $m$-fold loop space of $X$ and $\Omega^m(X,Y)$ denotes the $m$-fold relative loop spaces of $(X,Y)$. 
\\

For $m\geq 1$ let $\Delta^m$ be the simplicial $m$-simplex and  $\partial\Delta^m$ its boundary. 
For $0\leq p\leq m$, let $\Lambda_p^m$  be the $m$-horn at $p$. 
Explicitly, $(\Lambda_p^m)_k \subset \Delta^m_k$ is given by elements of the form $yz:[k]\to [m]$ for $z:[k]\to [n]$ and $y:[n]\to[m]$ with $n<m$ and, if $n=m-1$, then  $p\in Im(y)$. 
For more details on these simplicial sets we refer to \cite[p.6-7]{Goerss-Jardine}. 
It immediately follows from the definitions that: 
\begin{lem}\label{lem: not pb}
 Let $x\in\Delta^m$. If $x\notin  {\Lambda_p^m}$, then there is a $k\in\{m-1,m\}$ such that $x=\iota \rho:[n]\twoheadrightarrow [k] \hookrightarrow [m]$. %
 Moreover, if  $k=m-1$, then $\iota=\partial_p:[m-1]\hookrightarrow [m]$ is the $p$-face map ($p\notin Im(\partial_p)$); 
 if $k=m$, then $\iota=id:[m]\to [m]$ is the identity map. 
  In particular, if $x\notin \partial \Delta^m$, then $k=m$, hence $x$ is surjective. 
\end{lem}

Let $\sph^m= \Delta^m/\partial \Delta^m$ be the simplicial $m$-sphere. 
Let $K_{\close}[m]:=(\sph^m,*)$ and $K^p_{\open}[m]:=(\Delta^m/{\Lambda_p^m}, {\partial\Delta^m}/{\Lambda_p^m})$. 
Both $K_{\close}[m]$ and $K^p_{\open}[m]$ are simplicial objects in the category $\cat{CFin}$ whose objects are  pairs  $(A,B)$ of finite sets pointed at the same point with $*\subset B\subset A$ and morphisms $f:(A,B)\to (A',B')$ are morphisms of pointed sets $f:A\to A'$  such that $f(B)\subset B'$. The \emph{wedge sum} is given by $(A,B)\vee (A',B')= (A\vee A',B\vee B')$.

The inclusion $\Lambda_p^m\subset \partial \Delta^m$ induces the projection 
$\pi:K^p_{\open}[m] \to K_{\close}[m]$ that is a map of simplicial objects in $\cat{CFin}$.

\begin{prop}\label{prop: RL-coalg}
 Let $m\geq 1$ and $0\leq p\leq m$. 
 If $p$ is odd, then  the pair $(K_{\close}[m],K^p_{\open}[m])$ is a coalgebra over $\ca{RL}_m$ in $\cat{CFin}$.
 If $p$ is even, then  the pair $(K_{\close}[m],K^p_{\open}[m])$ is a coalgebra over $\ca{RL}'_m$ in $\cat{\cat{CFin}}$.  
\end{prop}
\begin{proof}
Let us fix $m\geq 1$. In order to alleviate notations one will denote $K_\close[m]$ (resp. $K^p_\open[m]$)  by $K^\close$ (resp. $K^\open$). 
 An element $x\in \ca{RL}(\n_1,...,\n_k;\n)$ induces maps 
 \begin{equation*}
  x^*:K^{\close}_n \to K^{\close}_{\n_1}\times \cdots \times K^{\close}_{\n_k} \text{ if } \n=n  \text{ and } 
  \hat{x}^*: K^{\open}_{\un{n}} \to K^{c_1}_{\n_1}\times \cdots \times K^{c_k}_{\n_k} \text{ if } \n=\un{n}.
    \end{equation*}
 For the closed part ($\n=n$), this is \cite[Proposition 2.16]{Batanin-Berger-Lattice}. 
 For $\n=\un{n}$,  let $(x_1,...,x_k)\in \triangle ([\n_1],[\un{n}])\times \cdots \times \triangle ([\n_k],[\un{n}])$ be the $k$-tuple corresponding to $x$ in the dual interpretation of $\ca{RL}$, see Section \ref{sec: dual interp}. 
 For $1\leq i\leq k$, let $n_i=ev(\n_i)$. 
 For $y\in \Delta^m/{\Lambda_p^m}$, the $i$-th component of $\hat{x}^*(y)$ is given by: 
 \begin{equation*}
  \hat{x}_i^*(y)=
  \begin{cases}
  x_i^*(y) &\text{ if } x_i:[\un{n_i}]\to [\un{n}] \\
  \pi(x_i^*(y)) &\text{ if } x_i:[{n_i}]\to [\un{n}]. 
 \end{cases}
\end{equation*}
 We prove that if $x\in \ca{RL}_m$ for $p$ odd (resp. $x\in \ca{RL}'_m$ for $p$ even)  then $\hat{x}^*(y)$ belongs to $K^{c_1}_{\n_1}\vee \cdots \vee K^{c_k}_{\n_k}$. 
 From now on, let us suppose that there are $i\neq j$ such that $x_i^*(y)$ and $x_j^*(y)$ are not at the base point.

Suppose $i$ corresponds to an open colour (\ie $\n_i=\un{n_i}$). 
   In this case, $x_i^*(y):[n_i]\to [n]\to [m]$ is not at the base point  \ie $x_i^*(y)\notin \Lambda_p^m$. From Lemma \ref{lem: not pb}, this means that  
   $Im(x_i)\cap y^{-1}(r)\neq \emptyset$ for each $r\in [m]$ such that $r\neq p$.    
\begin{enumerate}
 \item Suppose $j$ corresponds to an open colour.  
 This means that  $x_j^*(y_s)$ is not at the base point and then $Im(x_j)\cap y^{-1}(r)\neq \emptyset$ for each $r\in [m]$ such that $r\neq p$.  Therefore,   $\un{i}$ and $\un{j}$ appear in $m$ common fibres of $y$. Consequently,  $c_{i,j}(x)\geq m>m-1$.  
 \item Suppose $j$ corresponds to a closed colour. This means that $\pi(x_j^*(y))\notin \partial \Delta^m$, that is,  $x_j^*(y):[n_j]\to [n]\to [m]$ does not belong to $\partial \Delta^m$ and thus is surjective (Lemma \ref{lem: not pb}). It follows that   $Im(x_j)$ intersects all the fibres of $y$ \ie  $Im(x_j)\cap y^{-1}(r)\neq \emptyset$ for each ${r\in [m]}$. In particular, $y:[n]\to [m]$ is surjective and then there are two cases for $x_i$: 
 \begin{enumerate}
  \item   $Im(x_i)\cap y^{-1}(p)\neq \emptyset$ (\ie $Im(x_i)\cap y^{-1}(r)\neq \emptyset$ for each $r\in [m]$) and then $c_{i,j}(x)\geq m+1>m$ (because $\un{i}$ and ${j}$ appear in $m+1$ common fibres of $y$); or,   
 \item  $Im(x_i)\cap y^{-1}(p)= \emptyset$. 
 When $p$ is odd, this implies that $c_{{i},j}(x)>m$ if $j^-<\un{i}^-$ and $c_{{i},j}(x)>m-1$ if $j^->\un{i}^-$. 
 When $p$ is even, this implies that $c_{{i},j}(x)>m$ if $j^->\un{i}^-$ and $c_{{i},j}(x)>m-1$ if $j^-<\un{i}^-$. 
 \end{enumerate}
\end{enumerate}
 Let us illustrate what happens in 2(b) when $p$ is odd and $j^-<\un{i}^-$. In the following integer-string representation of $x$, 
 \begin{equation*}
  x=(\underbrace{\overbrace{j\un{i}}^{0}|\cdots | \overbrace{\un{i}j}^{p-2}|\overbrace{j\un{i}}^{p-1}}_{c_{{i},j}(x)_{|0,p-1}\geq p}|\overbrace{j}^{p}|\underbrace{\overbrace{j\un{i}}^{p+1}|\cdots | \overbrace{\un{i}j}^{m-1}|\overbrace{j\un{i}}^m}_{c_{{i},j}(x)_{|p+1,m}\geq m-p}) ~~ \text{with } j^-<\un{i}^-,
 \end{equation*}  
  we only represent (relevant occurrences of) the integers $\un{i}$ and $j$; the $m+1$ substrings represent the fibres $y^{-1}(r)$, $r\in [m]$ so that, if $n>m$, then these substrings would be subdivided (to end up with $n$ vertical bars). 
 Condition 2 says that the $j$ are present in each of the $m+1$ substrings.  
 The case 2(b) says that the $\un{i}$ are present in all the $m+1$ substrings except the substring $p$. 
 The number of switches $c_{{i},j}(x)_{|0,p-1}$ (resp. $c_{{i},j}(x)_{|p+1,m}$) between $\un{i}$ and $j$ in the first $p$ substrings (resp. in the  last $m-p$ substrings) is then $\geq p$ (resp. $\geq m-p$).  
 If $c_{{i},j}(x)_{|0,p-1}=p$, the conditions $j^-<\un{i}^-$ and $p$ odd imply that, in the substring $p-1$, the occurrences of $j$ all appear before that of $\un{i}$. Therefore, because of the presence of $j$ in the substring $p$, there is an additional switch \ie  $c_{{i},j}(x)\geq c_{{i},j}(x)_{|0,p-1}+1+c_{{i},j}(x)_{|p+1,m}\geq p+1+m-p>m$. 
\end{proof}

\begin{cor}
 Let $*\subset Y\subset X$ be topological spaces. 
 For $m\geq 1$, the pair $(\Omega^mX,\Omega^m(X,Y))$ is an algebra over $\Coend_{\ca{RL}_m}(\delta_{Top})$ and over $\Coend_{\ca{RL}'_m}(\delta_{Top})$. 
 \end{cor}
\begin{proof}
 By adjunction, the $m$-fold relative loop space $\Omega^m(X,Y)=Hom_{Top_*}(|K^p_\open[m]|_{\delta_{Top}},(X,Y))$ is homeomorphic to the $\delta_{Top}$-totalization of $(X,Y,*)^{(K^p_\open[m],*)}$. Similarly, the $m$-fold loop space $\Omega^mX=Hom_{Top_*}(|K_{\close}[m]|_{\delta_{Top}},X)$ is homeomorphic to the $\delta_{Top}$-totalization of $(X,*)^{(K_{\close}[m],*)}$. 
 Proposition \ref{prop: RL-coalg} implies that $((X,*)^{(K_{\close}[m],*)},(X,Y,*)^{(K^p_\open[m],*)})$ is an $\ca{RL}_m$-algebra for $p$ odd, and an $\ca{RL}'_m$-algebra for $p$ even. The result follows from condensation (Section \ref{sec: condensation}). 
\end{proof}

\section{The relative surjection operad}\label{sec: rel surj operad}

\newcommand{\e}{id}

We define two SC type operads $\ca{RS}_m$ and $\ca{RS}'_m$ that are sub-operads of $\Coend_{\ca{RL}_m}(\delta_{\ring})$ and $\Coend_{\ca{RL}'_m}(\delta_{\ring})$, respectively. 
We show that these inclusions are weak equivalences. 
\\

Since we are using $\delta_{\ring}$-realization, the Dold-Kan correspondence provides a convenient way to present the cosimplicial chain complex $\xi_{c_1,...,c_k;c}(\ca{RL}_m)( \delta_{\ring})$  as well as the operad $\Coend_{\ca{RL}_m}(\delta_{\ring})$.  We closely follows \cite[Section 3]{BBM} in which this point of view is adopted. In particular, we refer to \cite[Section 3.3]{BBM} for normalized totalizations of (co)simplicial abelian  groups; we adopt same notations except that, since we have defined  $\ca{RL}_m$ in the category of sets, we apply $\mathbb{Z}[-]$ to make it a multisimplicial/cosimplicial abelian  group. 
 This way, one identifies 
   $\Coend_{\ca{RL}_m}(\delta_{\ring})$ 
with $\overline{Nor}\un{Nor}(\mathbb{Z}[\ca{RL}_m])$, where $\un{Nor}(-)$ stands for the normalized realization of multisimplicial abelian groups and $\overline{Nor}(-)$ stands for the normalized totalization of cosimplicial dg-abelian groups.

Let $(c_1,...,c_k;c)$ be a $(k+1)$-tuple of colours in $ \{\close,\open\}$. 
As complexes, we set 
\[\ca{RS}_m(c_1,...,c_k;c)^*:=\un{Nor}(\mathbb{Z}[\ca{RL}_m(-,...,-;\n)]), 
\]
where $\n$ is of type $c$ such that $ev(\n)=0$  and the colours in the $i$-th input are of type $c_i$ for $1\leq i\leq k$.  
More explicitly, $\ca{RS}_m(c_1,...,c_k;c)^*$ is obtained from 
$\bigoplus_{*=-(n_1+...+n_k)}\mathbb{Z}[\ca{RL}_m(\n_1,...,\n_k;\n)]$, 
where $\n$ is of type $c$ such that $ev(\n)=0$ and $n_i:=ev(\n_i)$ and  $\n_i$ is of type $c_i$ for $1\leq i\leq k$,   
by modding out the images of the simplicial degeneracies.

 We closely follow \cite[equation (4) and (5)]{BBM} in which the following \emph{whiskering} map and partial compositions are defined for the closed case.

On an integer-string $x\in \ca{RL}_m(\n_1,...,\n_k;\n)$ with $ev(\n)=0$, the $n$-whiskering $w_n(x)$ is a signed sum of integer-strings  obtained from $x$ by: 
copying integers and adding a vertical bar between each copy (e.g. $i\mapsto i|i|i$) 
with the requirement that the total number of vertical bars is $n$. Note that there is no bar between two adjacent integers with different values (e.g. $i|j$ is not allowed). 
\begin{exple}
 $(\un{1}|\un{1}2|2|2)^\open$ is a term of $w_3((\un{1}2)^\open)$. 
\end{exple}
The whiskering 
\begin{align*}
w:\ca{RS}_m \to \overline{Nor}\un{Nor}(\mathbb{Z}[\ca{RL}_m])
\end{align*}
is defined by $x\mapsto \Pi w_n(x)$ for $x\in \ca{RL}_m(\n_1,...,\n_k;\n)$ with $ev(\n)=0$, and is extended by linearity.

Let us describe the partial compositions of ${\ca{RS}}_m$. 
For two integer-string $f\in{\ca{RS}}_m(c_1,...,c_k;c)$ and $g\in{\ca{RS}}_m(d_1,...,d_j;c_i)$
we set 
\begin{equation}\label{eq: compo RS}
 f\circ_{i}^{\ca{RS}_m} g = f\circ_{i} w_{val_i(f)} g,
\end{equation}
where $val_i(f)$ is one less than the number of occurrences of $ \wid{c}_i$ in the integer-string $f$. %
 We extend the compositions $\circ_i^{\ca{RS}_m}$ by linearity.

We define an operad $\ca{RS}'_m$ in the same way as $\ca{RS}_m$, by replacing  $\ca{RL}_m$ by $\ca{RL}'_m$ in the above paragraphs. 

\begin{prop}
The partial compositions $\circ_i^{\ca{RS}_m}$ (resp. $\circ_i^{\ca{RS}'_m}$) endow $\ca{RS}_m$ (resp. $\ca{RS}'_m$) with an operad structure. 
 The inclusion $w:\ca{RS}_m \hookrightarrow \Coend_{\ca{RL}_m}(\delta_{\ring})$ (resp. $w':\ca{RS}'_m \hookrightarrow \Coend_{\ca{RL}'_m}(\delta_{\ring})$) is a weak equivalence of operads. 
\end{prop}
\begin{proof}
 Except for signs, the fact that $w$ is compatible with the operadic structures is straightforward from the definition. 
  To get signs for the whiskering, we can proceed by a laborious induction, by requiring the whiskering to be a morphism of operads. 
  One can also remark that, concerning signs, the type of the colours (closed or open) does not matter. 
  This is because closed and open colours obey the same (co)simplicial and composition rules (with different constraints). 
   Thus, signs can be chosen using the same method than in the non-relative case (disregarding -for signs only- the type of colours): one could proceed as proposed in \cite[Proposition 3.2]{BBM}, that is, embed $\ca{RS}_m$ into the operad of coendomorphisms of the chains of a high dimensional simplex and then, choose signs in such a way that they match with those of \cite[Section 2.2]{BergerFresse} in the closed and in the open cases and, in fact, in the open/closed case, disregarding type of colours when choosing signs.  This can be compared with Proposition \ref{prop: action cosimpl abelian}, by asking the map $f$ to be $id:\Delta^n\to \Delta^n$.

Let us denote by 
\begin{equation*}
\pi:  \overline{Nor}\un{Nor}(\mathbb{Z}[\ca{RL}_m])\to  \un{Nor}(\mathbb{Z}[\ca{RL}_m])^0= \ca{RS}_m  
\end{equation*}
 the map induced by the projection $p:\un{Nor}(\mathbb{Z}[\ca{RL}_m])^\cdot\to  \un{Nor}(\mathbb{Z}[\ca{RL}_m])^0$. 
Such a map is a weak equivalence by Proposition \ref{prop: weak eq}.   Moreover it satisfies  $\pi \circ w=id$. Thus, $w$ is a weak equivalence. 
\end{proof}

\section{The operads $\ca{RL}_2$ and $\ca{RL}'_2$}\label{sec: RL2}
\subsection{The operad $\ca{RL}_2$ in term of trees}

\subsubsection{The sets  of $\ca{RT}$}\label{sec: pre def RT}

In what follows one will consider \emph{planar rooted trees}; one refers to \cite[II, Section 1.5]{MSS} for the terminology. 
 Our trees have only one external edge, called the \emph{root}; all the other edges have $2$ adjacent vertices. The external vertices (vertices that are adjacent to only one edge) are called the \emph{leaves}.    
Given a vertex $v$ of a rooted tree $T$, the minimal sub-tree of $T$ containing both the root and $v$ has only one edge originating from $v$; such an edge is called the \emph{output of $v$}. The edges originating from a vertex that are not the output are called the \emph{inputs}. 
In our planar trees, the set of the edges originating from a vertex $v$ is cyclically ordered in the clockwise direction. 
This induces a linear order on the set of the inputs of $v$. This also canonically endows the set of the leaves with a linear order.  

One considers planar rooted trees with $4$ types of vertices: 
white round-shaped  
$\begin{tikzpicture}[>=stealth, arrow/.style={->,shorten >=3pt}, point/.style={coordinate}, pointille/.style={draw=red, top color=white, bottom color=red},scale=1]
 \draw [black,fill=white] (0,0) circle (0.11)  ;
\end{tikzpicture}$,
called \emph{closed vertices};  
white square-shaped 
$\begin{tikzpicture}[>=stealth, arrow/.style={->,shorten >=3pt}, point/.style={coordinate}, pointille/.style={draw=red, top color=white, bottom color=red},scale=1]
 \sq{(0,0)}; 
\end{tikzpicture}$,
called \emph{open vertices}; 
black round-shaped 
$\begin{tikzpicture}[>=stealth, arrow/.style={->,shorten >=3pt}, point/.style={coordinate}, pointille/.style={draw=red, top color=white, bottom color=red},scale=1]
 \draw [black,fill=black] (0,0) circle (0.11)  ;
\end{tikzpicture}$, called \emph{neutral vertices};  
 and, black arrow-shaped 
 $~\begin{tikzpicture}[>=stealth, arrow/.style={->,shorten >=0pt}, point/.style={coordinate}, pointille/.style={draw=red, top color=white, bottom color=red},scale=1 ,decoration={markings, mark=at position 0.65 with {\arrow[scale=1.2]{>}}},baseline=-1.5ex]%
   \draw [decoration={markings, mark=at position 0 with {\arrow[scale=2,black]{>}}},postaction={decorate}] (0,0) -- (0,0.01); 
\end{tikzpicture}~$, called the \emph{arrows}. 
A \emph{white} vertex is either a closed vertex or an open vertex. %
\begin{de}\label{de: trees}
Let $T$ be a planar rooted tree. 
 Let $\nu$ be a vertex of $T$. We denote by $T_{\nu}$ the maximal sub-tree of $T$ such that the output of $\nu$ is the root of $T_{\nu}$. 
\end{de}

Let $(\n_1,...,\n_{k},\n)$ be a $k+1$-tuple of colours in $Col$, the set 
 $\ca{RT}(\n_1,...,\n_{k};\n)$ is the empty set if $\n\in Col_{\close}$ and there exists a $i$ such that $\n_i\in Col_{\open}$; 
 otherwise, $\ca{RT}(\n_1,...,\n_{k};\n)$ 
 is the set of equivalence classes of planar rooted trees $T$, satisfying the following: 
\begin{itemize}
 \item the set of the arrows  is a subset of the leaves of $T$ and is of cardinal $ev(\n)$.  
\item the set of the closed (resp. open) vertices  is labelled by the set $\{i\in\{1,...,k\} ~|~ e_i=n_i \}$  (resp. $\{i\in\{1,...,k\} ~|~ e_i=\un{n}_i\}$),  
in such a way that:
 \begin{itemize}
 \item[(F1)]  the vertex labelled by $i\in \{1,...,k\}$ has $ev(\n_i)$ inputs, 
 \item[(F2)] there is no white vertex above an open vertex \ie  if
 $\nu$ is an open vertex, 
 then 
 in the tree $T_{\nu}$  the vertex $\nu$ is the unique white vertex,  \label{item: F1}
 \end{itemize}
 \item if $e$ is an open colour (\ie $e=\un{n}$ for some $n$), then the root of $T$ is decorated by an $\open$. 
\end{itemize}

The equivalence class is the same as in \cite[3.2.1]{DTT-SC}. Explicitly, it is the finest one in which two planar rooted trees are equivalent if one of them can be obtained from the other by either:\\
- the contraction of an edge with neutral  adjacent vertices; or,\\ 
- removing an neutral  vertex with only one edge originating from it and joining the two edges adjacent to this vertex into one edge.

\begin{rmq}
 In the closed case, the trees of $\ca{RT}(n_1,...,n_{k};n)$ have no open vertices. 
\end{rmq}

\begin{figure}[H]
\centering  
\subfigure[An element of $\ca{RT}(3,2,2;6)$.\label{fig: tree closed}]
{
\begin{tikzpicture}[>=stealth, arrow/.style={->,shorten >=3pt}, point/.style={coordinate}, pointille/.style={draw=red, top color=white, bottom color=red},scale=0.65]
 \coordinate (SEPARATOR) at (-3,0);
 \coordinate (SEPARATOR2) at (3,0);
 \draw (SEPARATOR) node [left] {\phantom{a}};
 \draw (SEPARATOR2) node [left] {\phantom{a}};
 \coordinate (A) at (0,0);
 \coordinate (B) at (-1,1);
 \coordinate (C) at (1,1.5);
 \coordinate (D) at (.75,2);
 \draw [-] ($(A)+(0,-.65)$) -- (A);
 \draw [-] (A) -- (B);
 \draw [-] (A) -- (C);
 \draw [-] (C) -- (D);
  \draw [->] (B) -- ($(B)+(-.5,.5)$);
  \draw [->] (B) -- ($(B)+(.25,.75)$);
  \draw [->] (B) -- ($(B)+(.75,.5)$);
  \draw [->] (C) -- ($(C)+(+1.5,.75)$);
  \draw [->] (D) -- ($(D)+(-.5,.5)$);
  \draw [->] (D) -- ($(D)+(.5,.5)$);
 \draw ($(A)+(0,-.2)$) node [below,left] {${3}$};
 \draw ($(B)+(0,-.1)$) node [left] {$1$};
 \draw ($(C)+(.3,0)$) node [below] {$\scriptsize{2}$};
  \draw [black,fill=white] (A) circle (0.12)  ;
 \draw [black,fill=white] (B) circle (0.12)  ;
 \draw [black,fill=white] (C) circle (0.12)  ;
 \draw [black,fill=black] (D) circle (0.11)  ;
\end{tikzpicture}
}
\subfigure[An element of $\ca{RT}(2,\un{0},3,\un{3};\un{5})$.\label{fig: tree open-close2}]  
{  
\begin{tikzpicture}[>=stealth, arrow/.style={->,shorten >=3pt}, point/.style={coordinate}, pointille/.style={draw=red, top color=white, bottom color=red},scale=0.65]
 \coordinate (SEPARATOR) at (4,0);
 \coordinate (SEPARATOR2) at (-3,0);
 \draw (SEPARATOR) node [left] {\phantom{a}};
 \draw (SEPARATOR2) node [left] {\phantom{a}};
 \coordinate (A) at (0,0);
 \coordinate (B) at (-1,1);
 \coordinate (C) at (1,1.5);
 \coordinate (D) at (.5,2);
 \coordinate (E) at (1.75,2.5);
 \coordinate (F) at (2.8,1.9);
 \coordinate (G) at (3.5,2.7);
 \draw [-] ($(A)+(0,-.65)$) -- (A);
 \draw [-] (A) -- (B);
 \draw [-] (A) -- (C);
 \draw [-] (C) -- (D);
 \draw [-] (C) -- (E);
 \draw [-] (C) -- (F);
 \draw [-] (F) -- (G);
  \draw [->] (B) -- ($(B)+(-.5,.5)$);
  \draw [->] (B) -- ($(B)+(.25,.75)$);
  \draw [->] (D) -- ($(D)+(-.5,.5)$);
  \draw [->] (D) -- ($(D)+(.5,.5)$);
  \draw [->] (F) -- ($(F)+(-.5,.75)$);
  \draw [->] (F) -- ($(F)+(0,.75)$);
 \draw ($(B)+(0,-.1)$) node [left] {$1$};
 \draw ($(C)+(.3,0)$) node [below] {$\scriptsize{3}$};
 \draw ($(E)$) node [above] {$\scriptsize{{2}}$};
 \draw ($(F)+(.3,0)$) node [below] {$\scriptsize{{4}}$};
  \draw [black,fill=black] (A) circle (0.11)  ;
 \draw [black,fill=white] (B) circle (0.12)  ;
 \draw [black,fill=white] (C) circle (0.12)  ;
 \draw [black,fill=black] (D) circle (0.11)  ;
 \sq{(E)};
 \sq{(F)};
 \draw [black,fill=black] (G) circle (0.11)  ;
 \draw ($(A)+(.5,-.65)$) node {$\open$};
\end{tikzpicture}
}
\subfigure[An element of $\ca{RT}(2,2;\un{5})$.\label{fig: tree open2}]  
{  
\begin{tikzpicture}[>=stealth, arrow/.style={->,shorten >=3pt}, point/.style={coordinate}, pointille/.style={draw=red, top color=white, bottom color=red},scale=0.65]
 \coordinate (SEPARATOR) at (-3,0);
 \coordinate (SEPARATOR2) at (-3,0);
  \draw (SEPARATOR) node [left] {\phantom{a}};
  \draw (SEPARATOR2) node [left] {\phantom{a}};
 \coordinate (A) at (0,0);
 \coordinate (B) at (-1,1);
 \coordinate (C) at (1,1.5);
 \coordinate (D) at (.75,2);
 \draw [-] ($(A)+(0,-.65)$) -- (A);
 \draw [-] (A) -- (B);
 \draw [-] (A) -- (C);
 \draw [-] (C) -- (D);
  \draw [->] (B) -- ($(B)+(-.5,.5)$);
  \draw [->] (B) -- ($(B)+(.25,.75)$);
  \draw [->] (C) -- ($(C)+(+1.5,.75)$);
  \draw [->] (D) -- ($(D)+(-.5,.5)$);
  \draw [->] (D) -- ($(D)+(.5,.5)$);
 \draw ($(B)+(0,-.1)$) node [left] {$1$};
 \draw ($(C)+(.3,0)$) node [below] {$\scriptsize{2}$};
  \draw [black,fill=black] (A) circle (0.11)  ;
 \draw [black,fill=white] (B) circle (0.12)  ;
 \draw [black,fill=white] (C) circle (0.12)  ;
 \draw [black,fill=black] (D) circle (0.11)  ;
 \draw ($(A)+(.5,-.65)$) node {$\open$};
\end{tikzpicture}
}
\caption{Examples of trees in $\ca{RT}$.}
\end{figure}

The operadic structure of $\ca{RT}$ is explicitly given in the next section and corresponds to the substitution of trees into white vertices.  
\subsubsection{Correspondence between $\ca{RT}$ and $\ca{RL}_2$}\label{sec: corresp RT RL2}

Let us start by constructing a bijection of sets $\Phi:\ca{RT}(\n_1,...,\n_k;\n)\to\ca{RL}_2(\n_1,...,\n_k;\n)$ for each $k+1$-tuple $(\n_1,...,\n_k;\n)$  of colours. 
\\

\begin{itemize}
 \item The map $\Phi$. 
 For a  $T\in \ca{RT}(\n_1,...,\n_k;\n)$ let us construct an integer-string in $\ca{RL}_2(\n_1,...,\n_k;\n)$ as follows.  
One runs through the tree $T$ in clockwise direction starting from the root in such a way that one passes exactly two times on each edges (once per direction). 
 On our way, each time one meets a closed (resp. an open) vertex labelled by an $i\in\{1,...,k\}$ one writes down the corresponding label $i$ (resp. the corresponding label with an underline $\un{i}$) and  each time one meets an arrow one writes down a vertical bar. One adds an extra label $\open$ if the root is decorated by $\open$. 
 
 \item The map $\Phi^{-1}$.   To an integer-string representation one assigns a tree with: one closed (resp. open)  vertex for each different integer (resp. underlined integer) and one arrow for each vertical bar;  
  the white vertices have one input less than the number of occurrences for the corresponding integer; 
 { the corresponding tree is constructed such that its order fits with the reading (from the left to the right) of the integer-string.} 
 One adds an extra label $\open$ on the root if the integer-string is decorated by $\open$. 
 Note that when two equal integers (or two vertical bars) are adjacent in the integer-string this forces the creation of a neutral  vertex. 
\end{itemize}

  \begin{exple}
The tree from Figure \ref{fig: tree open-close2} corresponds to the integer-string $(1|1|13||3\un{2}3\un{4}|\un{4}|\un{4}\un{4}3)^{\open}$.    
  \end{exple}

  We endow $\ca{RT}$ with an operadic structure by transferring  the composition maps of $\ca{RL}_2$ along the bijections above.  %
  Explicitly, $T\circ_v T':=  \Phi^{-1}(\Phi(T)\circ_v \Phi(T'))$ for all composable $T,T'\in \ca{RT}$. 
One can check that the composition maps in $\ca{RT}$ are given by substitution of planar rooted trees into white  vertices: 
 let $T\in \ca{RT}(\n_1,...,\n_k;\n)$ and let $v$ be a vertex of $T$ labelled by $\n_i$. For a tree $T'\in \ca{RT}(f_1,...,f_j;\n_i)$, the tree $T\circ_v T'\in \ca{RT}(\n_1,...,\n_{i-1},f_1,...,f_j,\n_{i+1},...,\n_k;\n)$ is given as follows. The vertex $v$ of $T$ is substituted by the tree $T'$ in such a way that: 
 \begin{itemize}
 \item the root of $T'$ is identified with the root of $v$; 
  \item the ordered set of the  $n_i=ev(\n_i)$ arrows of $T'$ is identified with the ordered set of the   $n_i$ inputs of $v$. 
 \end{itemize}
 The set of the white vertices of $T\circ_v T'$ is labelled by the set  $\{1,...,{k+j-1}\}$  associated to $(\n_1,...,\n_{i-1},f_1,...,f_j,\n_{i+1},...,\n_k)$.

 \begin{figure}[H]
\centering
\begin{tikzpicture}[>=stealth, arrow/.style={->,shorten >=3pt}, point/.style={coordinate}, pointille/.style={draw=red, top color=white, bottom color=red},scale=0.8]
 \coordinate (A) at (0,0);
 \draw [-] (0,-0.5) -- (A) -- (-1,0.6);
 \draw [->] (A) -- (1,1);
 \draw [-] (A) -- (0.1,0.9) -- (0.5,2.5);
 \draw [<->] (-0.8,1.5) -- (0.1,0.9) -- (-0.8,2.4);
 \draw [->] (0.5,2.5) -- (0.9,2.9);
  \draw [black,fill=white] (0,0) circle (0.1)  ;
  \draw [black,fill=white] (0.5,2.5) circle (0.1)  ;
  \draw (0.1,0.9) node {$\bullet$}; 
  \draw [black,fill=white] (-1.1,0.5) rectangle (-.9,0.7) ;
   \draw (0,-0.15) node [left] {$1$};
  \draw (0.6,2.5) node [right] {$2$};
  \draw (-1.05,0.6) node [left] {${3}$};
  \draw (.3,-.4) node {$\open$};
 \draw [black] (2.3,1) circle (0.1)  ;
 \draw (2.4,0.8) node [right] {$1$};
 \coordinate (B) at (5,0);
 \draw [->] (5,-0.5) -- (B) -- (4,1);
 \draw [-] (B) -- (5,1.4);
 \draw [-] (B) -- (6,0.3);
 \draw [<->] (6.4,1) -- (6,0.3) -- (6.5,0);
  \draw [black,fill=white] (B) circle (0.1)  ;
   \draw [black,fill=white] (5,1.4) circle (0.1)  ;
   \draw (6,0.3) node {$\bullet$}; 
   \draw (5,-.15) node [left] {$1$};
   \draw (5.05,1.45) node [right] {$2$};
   \draw (7.5,1) node {$=$}; 
   \coordinate (C) at (9.4,0);
 \draw [-] (9.4,-0.5) -- (C) -- (8.4,0.6);
 \draw [->] (10,0.3) -- (10.5,0);
 \draw [-] (10.1,0.9) -- (10.5,2.5);
 \draw [<->] (9.2,1.5) -- (10.1,0.9) -- (9.2,2.4);
 \draw [-] (C) -- (9,1.1);
 \draw [-] (C) -- (10,0.3) -- (10.1,0.9);
 \draw [->] (10.5,2.5) -- (10.9,2.9);
  \draw [black,fill=white] (9.4,0) circle (0.1)  ;
  \draw [black,fill=white] (10.5,2.5) circle (0.1)  ;
  \draw (10.1,0.9) node {$\bullet$}; 
  \draw (10,0.3) node {$\bullet$}; 
  \draw [black,fill=white] (9,1.1) circle (0.1)  ;
   \draw [black,fill=white] (8.3,0.5) rectangle (8.5,0.7) ;
   \draw (9.4,-0.15) node [left] {$1$};
  \draw (10.6,2.5) node [right] {$3$};
  \draw (8.4,0.6) node [left] {${4}$};   
   \draw (9.05,1.) node [right] {$2$};
   \draw (9.7,-.4) node {$\open$};
    \draw (11.5,1) node {$=$}; 
   \coordinate (D) at (13.4,0);
 \draw [-] (13.4,-0.5) -- (D) -- (12.4,0.6);
 \draw [->] (14.1,0.9) -- (14.5,0.6);
 \draw [-] (14.1,0.9) -- (14.5,2.5);
 \draw [<->] (13.2,1.5) -- (14.1,0.9) -- (13.2,2.4);
 \draw [-] (D) -- (13,1.1);
 \draw [-] (D)  -- (14.1,0.9);
 \draw [->] (14.5,2.5) -- (14.9,2.9);
  \draw [black,fill=white] (13.4,0) circle (0.1)  ;
  \draw [black,fill=white] (14.5,2.5) circle (0.1)  ;
  \draw (14.1,0.9) node {$\bullet$}; 
  \draw [black,fill=white] (13,1.1) circle (0.1)  ;
   \draw [black,fill=white] (12.3,0.5) rectangle (12.5,0.7) ;
   \draw (13.4,-0.15) node [left] {$1$};
  \draw (14.6,2.5) node [right] {$3$};
  \draw (12.4,0.6) node [left] {${4}$};   
   \draw (13.05,1.) node [right] {$2$};
   \draw (13.7,-.4) node {$\open$};
\end{tikzpicture}\caption{An example of composition in $\ca{RT}$.}\label{fig: compo}
\end{figure}

By construction one has: 
\begin{prop}\label{prop: tree}
 The coloured operads  $\ca{RL}_2$ and $\ca{RT}$  are isomorphic.
\end{prop}

\subsection{The operad $\ca{RL}'_2$ in term of trees}
This section is the analogue to the previous section for $\ca{RL}'_2$. 
We define an operad $\ca{RT}'$ as follows. 
For a $(k+1)$-tuple $(\n_1,...,\n_k;\n)$ of  colours in $Col$, 
 the set $\ca{RT}'(\n_1,...,\n_k;\n)$ is defined as $\ca{RT}(\n_1,...,\n_k;\n)$ is, except that the condition (F2) from Section \ref{sec: pre def RT} is replaced by the condition: 
\begin{itemize}
 \item[(F2')] there is no white vertex below an open vertex 
 \ie if $\nu$ is a white vertex of $T$, then in the tree $T_{\nu}$  either the vertex $\nu$ is the unique open  vertex or there is no open vertex. 
\end{itemize}
In particular, for the closed part, $\ca{RT}'(n_1,...,n_k;{n})=\ca{RT}(n_1,...,n_k;{n})$. 
 \\
 
The operadic composition for $\ca{RT}'$ is as for $\ca{RT}$ and we have the analogue of Proposition \ref{prop: tree}:
\begin{prop}\label{prop: tree right}
 The coloured operads  $\ca{RL}'_2$ and $\ca{RT}'$  are isomorphic.
\end{prop}
Here is an example of an element of $\ca{RT}'$ and its corresponding element in $\ca{RL}'_2$:
\begin{equation*}
 \ca{RT}'(3,2,\un{2};\un{6}) \ni
 \begin{tikzpicture}[>=stealth, arrow/.style={->,shorten >=3pt}, point/.style={coordinate}, pointille/.style={draw=red, top color=white, bottom color=red},scale=0.65,baseline=.9ex]
 \coordinate (A) at (0,0);
 \coordinate (B) at (-1,1);
 \coordinate (C) at (1,1.5);
 \coordinate (D) at (.75,2);
 \draw [-] ($(A)+(0,-.65)$) -- (A);
 \draw [-] (A) -- (B);
 \draw [-] (A) -- (C);
 \draw [-] (C) -- (D);
  \draw [->] (B) -- ($(B)+(-.5,.5)$);
  \draw [->] (B) -- ($(B)+(.25,.75)$);
  \draw [->] (B) -- ($(B)+(.75,.5)$);
  \draw [->] (C) -- ($(C)+(+1.5,.75)$);
  \draw [->] (D) -- ($(D)+(-.5,.5)$);
  \draw [->] (D) -- ($(D)+(.5,.5)$);
 \draw ($(A)+(0,-.2)$) node [below,left] {${3}$};
 \draw ($(B)+(0,-.1)$) node [left] {$1$};
 \draw ($(C)+(.3,0)$) node [below] {$\scriptsize{2}$};
 \draw ($(A)+(.5,-.65)$) node {$\open$};
 \draw [black,fill=white] (-.1,-.1) rectangle (.1,.1) ;
 \draw [black,fill=white] (B) circle (0.11)  ;
 \draw [black,fill=white] (C) circle (0.11)  ;
 \draw [black,fill=black] (D) circle (0.11)  ;
\end{tikzpicture}%
\leftrightarrow (\un{3}1|1|1|1\un{3}2||2|2\un{3})^{\open} \in \ca{RL}_2'(3,2,\un{2};\un{6}).
\end{equation*}

\subsection{A few remarks on the operads $\ca{RS}_2$ and $\ca{RS}'_2$}

\begin{prop}\label{prop: generators RS2}
As an operad,  $\ca{RS}_2$ is generated by the following elements 
\begin{align*}
\mu_\close&=(12), ~~ T_k=(1213\cdots 1k1) \text{ for } k\geq 2, \text{ (closed part)} \\
\mu_{\open}&=(\un{1}\un{2})^{\open}, \text{ (open part)} \\
 T_{\underline{j}}&=(1\un{2}1\un{3}\cdots 1\un{j}1 )^{\open}  \text{ for } j\geq 2, ~~
inc=(1)^{\open},
\end{align*}
and the two unit elements $id_\close=(1)$ and $id_\open=(\un{1})^{\open}$. As an operad,  $\ca{RS}'_2$ is generated by 
\begin{align*}
\mu_\close, ~~ T_k \text{ for } k\geq 2,~~ \mu_{\open}, ~~
 T'_{\underline{j}}=(\un{1}2\un{1}3\cdots \un{1}j\un{1} )^{\open}  \text{ for } j\geq 2, ~~
inc,~~ id_\close \text{ and } id_\open. 
\end{align*} 
\end{prop}

\begin{proof}
We prove the statement for $\ca{RS}_2$, the case $\ca{RS}'_2$ is similar. 
We suppose by induction on $N$ that any integer-string of $\ca{RS}_2$ with $N$ different integers  is obtained by operadic compositions of elements cited in the statement. The cases $N=1$ and $N=2$ are trivially verified. \\
In what follows we abusively do not mark the distinction between underlined and non underlined integers. 
Let $x$ be an integer-string of $\ca{RS}_2$ with $N+1$ different integers. 
Because of the filtration condition \eqref{eq: filtra}, $x$ can be written as a sequence $(A_1\cdots A_n)$ where the $A_i$'s are non empty sequences of integers such that, if $j$ belongs to $A_i$, then $j\notin A_s$ for $s\neq i$. Moreover, because of the symmetric group action, one can suppose that the integers of $A_i$ are smaller than the integers of $A_j$ whenever $i<j$.  In this case, if $n>1$, then $x=(\alpha \circ_1(A_1\cdots A_{n-1}))\circ_{\max A_{n-1}+1} (\widetilde{A}_n)$ where $\widetilde{A}_n$ is obtained from $A_n$ by decreasing each number by $\max A_{n-1}$ and $\alpha$ is $(12), (\un{1}\un{2})^\open,(\un{1}2)^\open$, $(1\un{2})^\open$ or $(12)^\open$.  
Since the $A_i$'s are not empty, $(A_1\cdots A_{n-1})$ as well as $A_n$ have at most $N$ different integers and thus satisfy the induction hypotheses. 
If $n=1$ then either $x$ is $T_k$ (or $T_{\un{k}}$) for some $k$ or, $x$ is such that 
$A_1=jB_1jB_2j\cdots jB_pj$ with $1\leq p<N$ and for some integer $j$. Thus there exists at least one $B_{i_0}$ that contains  $2\leq q\leq N-(p-1)$ elements and  $x=(jB_1j\cdots B_{i_0-1}jajB_{i_0+1}j\cdots jB_{p}j)\circ_a \wid{B}_{i_0}$ for some $a$, which concludes the proof.  
\end{proof}

Via Proposition \ref{prop: tree}, the generators of $\ca{RS}_2$ described in Proposition \ref{prop: generators RS2} above, corresponds to the trees: 
\begin{equation*}
\begin{split}
\left[
id_\close=
\begin{tikzpicture}[>=stealth, arrow/.style={->,shorten >=3pt}, point/.style={coordinate}, pointille/.style={draw=red, top color=white, bottom color=red},scale=0.75,baseline=0ex]
 \coordinate (A) at (0,0);
 \draw [-] (A) -- (0,-.5);
 \draw (A) node [above] {\small{$1$}};
 \draw [black,fill=white] (A) circle (0.1)  ;
 \end{tikzpicture},
 ~~
 \mu_\close=
 \begin{tikzpicture}[>=stealth, arrow/.style={->,shorten >=3pt}, point/.style={coordinate}, pointille/.style={draw=red, top color=white, bottom color=red},scale=0.75,baseline=.5ex]%
 \coordinate (A) at (0,0);
 \coordinate (B) at (-.75,.75);
 \coordinate (C) at (.75,.75);
 \draw [-] (A) -- (0,-.5);
 \draw [-] (B) -- (A) -- (C);
 \draw (B) node [above] {\small{$1$}};
 \draw (C) node [above] {\small{$2$}};
 \draw [black,fill=black] (A) circle (0.1)  ;
 \draw [black,fill=white] (B) circle (0.1)  ;
 \draw [black,fill=white] (C) circle (0.1)  ;
 \end{tikzpicture},
 ~~
 T_k=
 \begin{tikzpicture}[>=stealth, arrow/.style={->,shorten >=3pt}, point/.style={coordinate}, pointille/.style={draw=red, top color=white, bottom color=red},scale=0.75,baseline=.5ex]%
 \coordinate (A) at (0,0);
 \coordinate (B) at (-1.5,.75);
 \coordinate (C) at (-.75,.75);
 \coordinate (D) at (.75,.75);
 \coordinate (E) at (1.5,.75);
 \draw [-] (A) -- (0,-.5);
 \draw [-] (B) -- (A) -- (C);
 \draw [-] (D) -- (A) -- (E);
 \draw (A) node [left,yshift=-1.5mm] {\small{$1$}};
 \draw (B) node [above] {\small{$2$}};
 \draw (C) node [above] {\small{$3$}};
 \draw (D) node [above] {\small{$k-1$}};
 \draw (E) node [above] {\small{$k$}};
 \draw (0,.5) node [above] {$\cdots$};
 \draw [black,fill=white] (A) circle (0.1)  ;
 \draw [black,fill=white] (B) circle (0.1)  ;
 \draw [black,fill=white] (C) circle (0.1)  ;
 \draw [black,fill=white] (D) circle (0.1)  ;
 \draw [black,fill=white] (E) circle (0.1)  ;
 \end{tikzpicture}
 \right]
 & \text{(closed part)}\\
\left[
\underbrace{id_\open=
\begin{tikzpicture}[>=stealth, arrow/.style={->,shorten >=3pt}, point/.style={coordinate}, pointille/.style={draw=red, top color=white, bottom color=red},scale=0.75,baseline=.0ex]%
 \coordinate (A) at (0,0);
 \draw [-] (A) -- (0,-.5);
 \draw (A) node [above] {\small{${1}$}};
 \sq{(A)};
 \draw (A) node [right,yshift=-3mm] {$\open$};
 \end{tikzpicture},
 ~~
 \mu_\open=
 \begin{tikzpicture}[>=stealth, arrow/.style={->,shorten >=3pt}, point/.style={coordinate}, pointille/.style={draw=red, top color=white, bottom color=red},scale=0.75,baseline=.5ex]%
 \coordinate (A) at (0,0);
 \coordinate (B) at (-.75,.75);
 \coordinate (C) at (.75,.75);
 \draw [-] (A) -- (0,-.5);
 \draw [-] (B) -- (A) -- (C);
 \draw (B) node [above] {\small{${1}$}};
 \draw (C) node [above] {\small{${2}$}};
 \draw [black,fill=black] (A) circle (0.1)  ;
 \sq{(B)};
 \sq{(C)};
 \draw (A) node [right,yshift=-3mm] {$\open$};
 \end{tikzpicture}}_{\text{(open part)}}, 
~ inc=
\begin{tikzpicture}[>=stealth, arrow/.style={->,shorten >=3pt}, point/.style={coordinate}, pointille/.style={draw=red, top color=white, bottom color=red},scale=0.75,baseline=0ex]%
 \coordinate (A) at (0,0);
 \draw [-] (A) -- (0,-.5);
 \draw (A) node [above] {$1$};
 \draw [black,fill=white] (A) circle (0.1)  ;
 \draw (A) node [right,yshift=-3mm] {$\open$};
 \end{tikzpicture},
 ~~
 T_{\un{j}}=
 \begin{tikzpicture}[>=stealth, arrow/.style={->,shorten >=3pt}, point/.style={coordinate}, pointille/.style={draw=red, top color=white, bottom color=red},scale=0.75,baseline=.5ex]%
 \coordinate (A) at (0,0);
 \coordinate (B) at (-1.5,.75);
 \coordinate (C) at (-.75,.75);
 \coordinate (D) at (.75,.75);
 \coordinate (E) at (1.5,.75);
 \draw [-] (A) -- (0,-.5);
 \draw [-] (B) -- (A) -- (C);
 \draw [-] (D) -- (A) -- (E);
 \draw (A) node [left,yshift=-1.5mm] {\small{$1$}};
 \draw (B) node [above] {\small{${2}$}};
 \draw (C) node [above] {\small{${3}$}};
 \draw (D) node [above] {\small{${j-1}$}};
 \draw (E) node [above] {\small{${j}$}};
 \draw (0,.5) node [above] {$\cdots$};
 \draw [black,fill=white] (A) circle (0.1)  ;
 \sq{(B)};
 \sq{(C)}; 
 \sq{(D)}; 
 \sq{(E)};  
 \draw (A) node [right,yshift=-3mm] {$\open$};
 \end{tikzpicture}
 \right]
 & \text{(non-closed part).}
 \end{split}
\end{equation*}

For $\ca{RS}_2'$, the element $T'_{\un{j}}$ corresponds to 
$ \begin{tikzpicture}[>=stealth, arrow/.style={->,shorten >=3pt}, point/.style={coordinate}, pointille/.style={draw=red, top color=white, bottom color=red},scale=0.75,baseline=.5ex]%
 \coordinate (A) at (0,0);
 \coordinate (B) at (-1.5,.75);
 \coordinate (C) at (-.75,.75);
 \coordinate (D) at (.75,.75);
 \coordinate (E) at (1.5,.75);
 \draw [-] (A) -- (0,-.5);
 \draw [-] (B) -- (A) -- (C);
 \draw [-] (D) -- (A) -- (E);
 \draw (A) node [left,yshift=-1.5mm] {\small{${1}$}};
 \draw (B) node [above] {\small{$2$}};
 \draw (C) node [above] {\small{$3$}};
 \draw (D) node [above] {\small{$j-1$}};
 \draw (E) node [above] {\small{$j$}};
 \draw (0,.5) node [above] {$\cdots$};
 \sq{(A)}; 
 \draw [black,fill=white] (B) circle (0.1)  ;
 \draw [black,fill=white] (C) circle (0.1)  ;
 \draw [black,fill=white] (D) circle (0.1)  ;
 \draw [black,fill=white] (E) circle (0.1)  ;
 \draw (A) node [right,yshift=-3mm] {$\open$};
 \end{tikzpicture}.$

 The operadic structure of $\ca{RS}_2$ is described in terms of trees by means of Proposition \ref{prop: tree}; 
the whiskering $w$ defined in Section \ref{sec: rel surj operad} has a corresponding map on $\ca{RT}$ (roughly, it consists in adding arrows linked to the white vertices) and the composition of two trees is given by transferring the formula \eqref{eq: compo RS} to $\ca{RT}$. 
One has a similar description for $\ca{RS}'_2$. 
For the closed part, detailed are given in \cite{BBM} with a slightly different convention. 
A full description of the operad $\ca{RS}'_2$ in terms of similar trees is given in \cite{HLQ-SC}.

\subsection{The algebras over $\ca{RL}_2$}\label{sec: def repres}

We describe the algebras over $\ca{RL}_2$ where, implicitly, $\ca{RL}_2$ is seen  in $\cat{C}$ by means of the strong  symmetric monoidal functor $\cat{Set}\to \cat{C}$,  $E\mapsto \coprod_{e\in E} 1_{\cat{C}}$. 
Precisely, we show that  $\ca{RL}_2$ encodes the pairs $(\ca{M},\ca{Z})$ subject to the following conditions.
\begin{enumerate}[I.]
 \item $\ca{M}$ is a multiplicative non-$\Sigma$ operad.\label{A1} 
 \item $\ca{Z}\in \bmodmas$ and there is a morphism $\iota:\ca{M}\to \ca{Z}$ in $\bmodmas$.  \label{A2} 
\end{enumerate}

Let $\cat{E}$ be the category with objects the pairs $(\ca{M},\ca{Z})$ satisfying the two conditions \ref{A1} and \ref{A2} above; morphisms are the pairs 
$(f,g): (\ca{M},\ca{Z})\to (\ca{M}',\ca{Z}')$  subject to the following conditions:
\begin{itemize}
 \item $f:\ca{M}\to \ca{M}'$ is a morphism of multiplicative non-$\Sigma$ operads;
 \item $g:\ca{Z}\to\ca{Z}'$ is an $f$-equivariant morphism of left modules over $\ca{M}$; 
 \item $g$ is a morphism of bimodules over $\ca{A}s$; and, 
 \item $\iota'\circ f=g\circ \iota$. 
\end{itemize}
 
 \begin{rmq}
Note that $\cat{E}$ is well defined since, by the last condition $\iota'\circ f=g\circ \iota$, the morphism  $\iota$ of \ref{A2} associated to $(\ca{M},\ca{Z})$ is unique. 
 Remark that the pair $(\ca{A}s,\ca{A}s)$ is the initial object of $\cat{E}$.   
 \end{rmq}

\begin{rmq}
As observed in Example \ref{ex: weak mod and map}, since here we have a morphism $\ca{A}s\to \ca{Z}$, it follows from \ref{A2}  that $\ca{Z}$ is, in particular, a weak bimodule over $\ca{A}s$ (see also proof of Proposition \ref{prop: hypI,II,III correspondance}). By Lemma \ref{lem: ASSOC-weak=cosimpl} this is equivalent to say that $\ca{Z}$ is endowed with a  cosimplicial structure. Likewise,  the multiplicative structure of $\ca{M}$ endows it with a cosimplicial structure. 
These are the cosimplicial structures involved in the $\delta$-totalization that gives rise to the $\Coend_{\ca{RL}_2}(\delta)$-algebra  $(Tot_{\delta}\ca{M},Tot_{\delta}\ca{Z})$. 
\end{rmq}

\begin{prop}\label{prop: hypI,II,III correspondance} 
 The category of $\ca{RL}_2$-algebras in $\cat{C}$ is isomorphic to the category $\cat{E}$.
\end{prop}

\begin{proof} 
We use the interpretation of $\ca{RL}_2$ in terms of planar trees, see Proposition \ref{prop: tree}. 

Let us start with an $\ca{RL}_2$-algebra $(\ca{M},\ca{Z})$. This means that for each $T\in \ca{RL}_2(\n_1,...,\n_p;\n)$ one has a corresponding operation: 
\begin{equation*}
 T\in \Hom( \textit{products of $\ca{M}(n_i)$ and $\ca{Z}(n_j)$}, \ca{Z}(n)) \text{ or, } T\in \Hom(\textit{products of $\ca{M}(n_i)$}, \ca{M}(n)), 
\end{equation*}
  according to the type of  the  output $\n$, where $n_s=ev(\n_s)$ is the value of $e_s$, $1\leq s\leq p$ and $n=ev(\n)$. 

Let us write explicitly the induced structures on $\ca{M}$ and $\ca{Z}$. 
\begin{enumerate}
 \item Operadic structure on $\ca{M}$: the partial compositions on elementary trees are given by
 \begin{equation*}
\scalebox{.75}{
\begin{tikzpicture}[>=stealth, arrow/.style={->,shorten >=3pt}, point/.style={coordinate}, pointille/.style={draw=red, top color=white, bottom color=red},scale=.8,baseline=-0.9ex]%
 \coordinate (A) at (0,0);
 \coordinate (B) at (1.2,1.5);
 \draw [->] (A) -- (-1,1.2);
 \draw [->] (A) -- (0.2,1.2);
 \draw [-] (A) -- (B);
 \draw [->] (A) -- (1.2,0.9);
 \draw [->] (A) -- (1.8,0.6);
 \draw (-1,1.2) node [above] {$1$};
 \draw (.2,1.2) node [above] {$i-1$};
 \draw (1.2,.9) node [above,right] {$i+1$};
 \draw (1.8,.6) node [above,right] {$k$};
 \draw [->] (B) -- ($(B)+ (-.5,.5)$) ;
 \draw [->] (B) -- ($(B)+ (-.2,.6)$) ;
 \draw [->] (B) -- ($(B)+ (.5,.6)$) ;
 \draw ($(B)+ (-.5,.5)$) node [above,yshift=-1mm] {$1$};
 \draw ($(B)+ (.5,.6)$) node [right] {$l$};
  \draw [-]  (0,-0.5) -- (A);
 \draw [black,fill=white] (B) circle (0.13)  ;
  \draw (-0.2,.8) node {$...$};
  \draw (1.3,.6) node {$...$}; 
  \draw [black,fill=white] (0,0) circle (0.13)  ;
\end{tikzpicture}}
 \leftrightarrow\circ_i :
\ca{M}(k)\ot \ca{M}(l)\to \ca{M}(k+l-1).
\end{equation*} 
The operadic structure of $\ca{RL}_2$ provides the associativity condition for partial compositions of $\ca{M}$.   This is similar to (3) below, which we detail. 
 
 \item The multiplicative structure on $\ca{M}$: the morphism $\alpha:\ca{A}s(k)\to \ca{M}(k)$ is given by the corolla in a neutral  vertex with $k$ inputs.   For $k=1$, one has $\alpha=\eta:1_{\cat{C}}\to \ca{M}(1)$ is the unit.  Note that the isomorphism $\ca{A}s(k)\ot \ca{A}s(l)\to \ca{A}s(k+l-1)$ corresponds to the equivalence relation made on the neutral  vertices (see Definition \ref{de: trees}). 
 The multiplication in $\ca{M}$ is given by operations as: 
 \begin{equation*}
\begin{tikzpicture}[>=stealth, arrow/.style={->,shorten >=3pt}, point/.style={coordinate}, pointille/.style={draw=red, top color=white, bottom color=red},scale=0.5,baseline=-0.9ex]%
 \coordinate (A) at (0,0);
 \coordinate (B) at (-.8,.8);
 \coordinate (C) at (.8,.8);
  \draw [-] (A) -- (0,-.5);
 \draw [-] (A) -- (B) ;
 \draw [-] (A) -- (C) ;
 \draw [->] (B) -- ($(B)+ (-0.6,.7)$);
 \draw [->] (B) -- ($(B)+ (0.6,.7)$);
 \draw [->] (C) -- ($(C)+ (-0.6,.7)$);
 \draw [->] (C) -- ($(C)+ (0.1,.8)$);
 \draw [->] (C) -- ($(C)+ (0.6,.7)$);
  \draw [black,fill=black] (A) circle (0.13)  ;
  \draw [black,fill=white] (B) circle (0.15)  ;
  \draw [black,fill=white] (C) circle (0.15)  ;
\end{tikzpicture}%
:  \ca{M}(2)\ot \ca{M}(3)=  \ca{A}s(2)\ot \ca{M}(2)\ot \ca{M}(3)\to  \ca{M}(5). 
 \end{equation*}
 \item Left action of $\ca{M}$ on $\ca{Z}$. 
 The $k$-corollas in a closed vertex with $k$ open vertices at the inputs give the left action $\lambda$: 
\begin{equation*}
\begin{tikzpicture}[>=stealth, arrow/.style={->,shorten >=3pt}, point/.style={coordinate}, pointille/.style={draw=red, top color=white, bottom color=red},scale=0.7,baseline=-0.9ex]%
 \coordinate (A) at (0,0);
 \coordinate (B) at (-1.55,.7);
 \coordinate (C) at (-.3,1.1);
 \coordinate (D) at (1.5,1);
 \draw [-] (A) -- (B);
 \draw [->] (B) -- ($(B)+ (-.5,.5)$) ;
 \draw [->] (B) -- ($(B)+ (-.2,.6)$) ;
 \draw [->] (B) -- ($(B)+ (.3,.6)$) ;
 \draw [->] (C) -- ($(C)+ (-.5,.5)$) ;
 \draw [->] (C) -- ($(C)+ (.3,.6)$) ;
\draw (A) node [left,yshift=-1mm] {\tiny{$1$}};
  \draw (B) node [left] {\tiny{$2$}};
 \draw (C) node [left] {\tiny{${3}$}};
 \draw [-] (A) -- (C);
  \draw [-]  (0,-0.5) -- (A);
 \sq{(B)};
 \draw [-] (A) -- (D);
 \draw [->] (D) -- ($(D)+ (-.5,.5)$) ;
 \draw [->] (D) -- ($(D)+ (-.2,.6)$) ;
 \draw [->] (D) -- ($(D)+ (.3,.6)$) ;
 \sq{(D)};
 \draw (D) node [left] {\tiny{$k+1$}};
 \draw ($(B)+ (-.5,.5)$)  node [left] {\tiny{$1$}};
 \draw ($(B)+ (.3,.6)$)  node [above,yshift=-1.5mm] {\tiny{$m_1$}};
 \draw ($(D)+ (-.5,.5)$)  node [left,xshift=1mm] {\tiny{$1$}};
 \draw ($(D)+ (.3,.6)$)  node [right,xshift=-1mm] {\tiny{$m_k$}};
 \draw [black,fill=white] ($(C)-(.1,.1)$) rectangle ($(C)+(.1,.1)$) ;
  \draw (.45,.8) node {$...$}; 
  \draw [black,fill=white] (0,0) circle (0.11)  ;
  \draw (0,-.5) node [right] {\scriptsize{$\open$}};
\end{tikzpicture}%
\leftrightarrow \lambda: 
\ca{M}(k)\ot \ca{Z}(m_1)\ot \cdots \ot \ca{Z}(m_k) \to \ca{Z}(m_1+...+m_k).
 \end{equation*}
 The operadic structure of $\ca{RL}_2$ gives the associativity of the left action:  
 \begin{align*}
\scalebox{.9}{
\begin{tikzpicture}[>=stealth, arrow/.style={->,shorten >=3pt}, point/.style={coordinate}, pointille/.style={draw=red, top color=white, bottom color=red},scale=0.65,baseline=3ex]%
 \coordinate (A) at (0,0);
 \coordinate (B) at (-1.6,.3);
 \coordinate (C) at (-.2,1.2);
 \coordinate (D) at (1.5,1.3);
 \coordinate (E) at (1.5,.6);
 \coordinate (F) at (1.4,-.1);
 \draw [-]  (0,-0.5) -- (A);
 \draw [-] (A) -- (B);
 \draw [-] (A) -- (C) ;
 \draw [-] (A) -- (D);
 \draw [-] (A) -- (E) ;
 \draw [-] (A) -- (F) ;
 \draw [->] (B) -- ($(B)+ (-.5,.5)$) ;
 \draw [->] (B) -- ($(B)+ (-.2,.6)$) ;
 \draw [->] (B) -- ($(B)+ (.3,.6)$) ;
 \draw [->] (C) -- ($(C)+ (-.5,.5)$) ;
 \draw [->] (C) -- ($(C)+ (.3,.6)$) ;
 \draw [->] (F) -- ($(F)+ (.5,.1)$) ;
 \draw [->] (F) -- ($(F)+ (.5,.-.2)$) ;
 \draw (B) node [left] {\tiny{$3$}};
 \draw (C) node [left] {\tiny{${i+1}$}};
 \draw (E) node [right] {\tiny{${i+2+l}$}};
 \draw (F) node [below,xshift=2mm] {\tiny{$k+l+2$}};
 \sq{(B)};
 \sq{(C)};
 \sq{(E)};
 \sq{(F)};
  \draw (-0.5,.5) node {$...$};
  \draw [black,fill=white] (0,0) circle (0.13)  ;
  \draw (0,-.5) node [right] {\scriptsize{$\open$}};
 \coordinate (A1) at (D);
 \coordinate (B1) at ($(D)+(-1.,.8)$);
 \coordinate (C1) at ($(D)+(.5,.9)$);
 \coordinate (D1) at ($(D)+(1.2,.7)$);
 \draw [-] (A1) -- (B1);
 \draw [-] (A1) -- (C1);
 \draw [->] (B1) -- ($(B1)+ (-.5,.5)$) ;
 \draw [->] (B1) -- ($(B1)+ (-.2,.6)$) ;
 \draw [->] (B1) -- ($(B1)+ (.3,.6)$) ;
 \draw [->] (C1) -- ($(C1)+ (-.5,.5)$) ;
 \draw [->] (C1) -- ($(C1)+ (.3,.6)$) ;
 \draw [-] (A1) -- (D1);
 \draw [->] (D1) -- ($(D1)+ (-.5,.5)$) ;
 \draw [->] (D1) -- ($(D1)+ (-.2,.6)$) ;
 \draw [->] (D1) -- ($(D1)+ (.3,.6)$) ;
 \draw (B1) node [left] {\scriptsize{$i+2$}};
 \draw (D1) node [right] {\scriptsize{$i+1+l$}};
 \draw (A) node [left,yshift=-1.2mm] {\tiny{$1$}};
 \draw (D) node [below] {\tiny{$2$}};
 \sq{(B1)};
 \sq{(C1)};
 \sq{(D1)};
  \draw ($(D)+(-0.2,.8)$) node {$...$};
  \draw [black,fill=white] (A1) circle (0.12)  ;
\end{tikzpicture}%
}  
=
\sigma_{(2,i),i+1}\left(
\scalebox{.9}{
\begin{tikzpicture}[>=stealth, arrow/.style={->,shorten >=3pt}, point/.style={coordinate}, pointille/.style={draw=red, top color=white, bottom color=red},scale=0.65,baseline=2ex]%
 \coordinate (A) at (0,0);
 \coordinate (B) at (-1.6,.3);
 \coordinate (C) at (-.2,1.2);
 \coordinate (D) at (1.5,1.3);
 \coordinate (E) at (1.5,.6);
 \coordinate (F) at (1.4,-.1);
 \draw [-]  (0,-0.5) -- (A);
 \draw [-] (A) -- (B);
 \draw [-] (A) -- (C) ;
 \draw [-] (A) -- (D);
 \draw [-] (A) -- (E) ;
 \draw [-] (A) -- (F) ;
 \draw [->] (B) -- ($(B)+ (-.5,.5)$) ;
 \draw [->] (B) -- ($(B)+ (-.2,.6)$) ;
 \draw [->] (B) -- ($(B)+ (.3,.6)$) ;
 \draw [->] (C) -- ($(C)+ (-.5,.5)$) ;
 \draw [->] (C) -- ($(C)+ (.3,.6)$) ;
 \draw [->] (F) -- ($(F)+ (.5,.1)$) ;
 \draw [->] (F) -- ($(F)+ (.5,.-.2)$) ;
 \draw (A) node [left,yshift=-1.2mm] {\tiny{$1$}};
 \draw (B) node [left] {\tiny{$2$}};
 \draw (C) node [left] {\tiny{${i}$}};
 \draw (E) node [right] {\tiny{${i+2}$}};
 \draw (F) node [below,xshift=2mm] {\tiny{$k+1$}};
 \sq{(B)};
 \sq{(C)};
 \sq{(E)};
 \sq{(F)};
  \draw (-0.5,.5) node {$...$};
  \draw [black,fill=white] (0,0) circle (0.13)  ;
  \draw (0,-.5) node [right] {\scriptsize{$\open$}};
 \coordinate (A1) at (D);
 \draw [->] (A1) -- ($(A1)+ (-.5,.5)$) ;
 \draw [->] (A1) -- ($(A1)+ (.5,.6)$) ;
 \draw (A1) node [right] {\scriptsize{$i+1$}};
 \draw ($(A1)+ (-.5,.5)$) node [above,yshift=-1mm] {\scriptsize{$1$}};
 \draw ($(A1)+ (.5,.6)$) node [above,yshift=-1mm] {\scriptsize{$x$}};
 \draw ($(A1)+(-0,.4)$) node {$...$};
 \sq{(A1)};
\end{tikzpicture}%
}
\circ_{i+1}
\scalebox{.9}{
\begin{tikzpicture}[>=stealth, arrow/.style={->,shorten >=3pt}, point/.style={coordinate}, pointille/.style={draw=red, top color=white, bottom color=red},scale=0.65,baseline=1.5ex]%
 \coordinate (D) at (0,0);
\coordinate (A1) at (D);
 \coordinate (B1) at ($(D)+(-1.,1.)$);
 \coordinate (C1) at ($(D)+(.5,1.1)$);
 \coordinate (D1) at ($(D)+(1.2,.8)$);
 \draw [-] (A1) -- (0,-.5);
 \draw [-] (A1) -- (B1);
 \draw [-] (A1) -- (C1);
 \draw [->] (B1) -- ($(B1)+ (-.5,.5)$) ;
 \draw [->] (B1) -- ($(B1)+ (-.2,.6)$) ;
 \draw [->] (B1) -- ($(B1)+ (.3,.6)$) ;
 \draw [->] (C1) -- ($(C1)+ (-.5,.5)$) ;
 \draw [->] (C1) -- ($(C1)+ (.3,.6)$) ;
 \draw [-] (A1) -- (D1);
 \draw [->] (D1) -- ($(D1)+ (-.5,.5)$) ;
 \draw [->] (D1) -- ($(D1)+ (-.2,.6)$) ;
 \draw [->] (D1) -- ($(D1)+ (.3,.6)$) ;
 \draw (A) node [left,yshift=-1.2mm] {\tiny{$1$}};
 \draw (B1) node [left] {\scriptsize{$2$}};
 \draw (D1) node [right] {\scriptsize{$l+1$}};
 \sq{(B1)};
 \sq{(C1)};
 \sq{(D1)};
  \draw ($(D)+(-0.2,.8)$) node {$...$};
  \draw [black,fill=white] (A1) circle (0.12)  ;
  \draw (0,-.5) node [right] {\scriptsize{$\open$}};
\end{tikzpicture}%
}
\right)
\\
=
\scalebox{.9}{
\begin{tikzpicture}[>=stealth, arrow/.style={->,shorten >=3pt}, point/.style={coordinate}, pointille/.style={draw=red, top color=white, bottom color=red},scale=0.65,baseline=1.5ex]%
 \coordinate (A) at (0,0);
 \coordinate (B) at (-1.6,.3);
 \coordinate (C) at (-.2,1.2);
 \coordinate (D) at (.9,1.5);
 \coordinate (E) at (1.85,1.5);
 \coordinate (F) at (2.6,1.);
 \coordinate (G) at (1.5,.3);
 \coordinate (H) at (1.4,-.2);
 \draw [-] (A) -- (B);
 \draw [-] (A) -- (C);
 \draw [-] (A) -- (G);
 \draw [-] (A) -- (H);
 \draw [->] (B) -- ($(B)+ (-.5,.5)$) ;
 \draw [->] (B) -- ($(B)+ (-.2,.6)$) ;
 \draw [->] (B) -- ($(B)+ (.3,.6)$) ;
 \draw [->] (C) -- ($(C)+ (-.5,.5)$) ;
 \draw [->] (C) -- ($(C)+ (.3,.6)$) ;
 \draw (B) node [left] {\tiny{$2$}};
 \draw (C) node [left] {\tiny{$i+1$}};
   \draw [-]  (0,-0.5) -- (A);
 \draw [->] (D) -- ($(D)+ (-.5,.5)$) ;
 \draw [->] (D) -- ($(D)+ (-.2,.6)$) ;
 \draw [->] (D) -- ($(D)+ (.3,.6)$) ;
 \draw (D) node [left] {\tiny{$i+2$}};
 \draw (F) node [right] {\tiny{$i+1+l$}};
 \draw [line width=1.2,-] (A) -- (D);
 \draw [line width=1.2,-] (A) -- (F);
 \draw [line width=1.2,-] (A) -- (E);
 \draw [->] (E) -- ($(E)+ (-.5,.5)$) ;
 \draw [->] (E) -- ($(E)+ (.3,.6)$) ;
 \draw [->] (F) -- ($(F)+ (-.5,.5)$) ;
 \draw [->] (F) -- ($(F)+ (-.2,.6)$) ;
 \draw [->] (F) -- ($(F)+ (.3,.6)$) ;
 \draw [->] (H) -- ($(H)+ (.5,.1)$) ;
 \draw [->] (H) -- ($(H)+ (.5,.-.2)$) ;
  \draw (-0.5,.5) node {$...$};
  \draw (G) node [right] {\tiny{$i+2+l$}};
  \draw (H) node [below,xshift=1.5mm] {\tiny{$k+l+2$}};
  \draw [black,fill=white] (0,0) circle (0.12)  ;
  \draw (0,-.5) node [right] {\scriptsize{$\open$}};
  \draw (0,-.2) node [left] {\scriptsize{$1$}};
  \sq{(B)};
 \sq{(C)};
 \sq{(D)};
 \sq{(E)};
 \sq{(F)};
 \sq{(G)};
 \sq{(H)};
\end{tikzpicture}%
}
\circ_{1}
\scalebox{.9}{
\begin{tikzpicture}[>=stealth, arrow/.style={->,shorten >=3pt}, point/.style={coordinate}, pointille/.style={draw=red, top color=white, bottom color=red},scale=.65,baseline=1.5ex]%
 \coordinate (A) at (0,0);
 \coordinate (B) at (1.2,1.5);
 \draw [->] (A) -- (-1,1.2);
 \draw [->] (A) -- (0.2,1.2);
 \draw [-] (A) -- (B);
 \draw [->] (A) -- (1.2,0.9);
 \draw [->] (A) -- (1.8,0.6);
 \draw (-1,1.2) node [above,yshift=-1mm] {\scriptsize{$1$}};
 \draw (.2,1.2) node [above,yshift=-1mm] {\scriptsize{$i-1$}};
 \draw (1.2,.9) node [above,right] {\scriptsize{$i+1$}};
 \draw (1.8,.6) node [above,right] {\scriptsize{$k$}};
 \draw [line width=1.2,->] (B) -- ($(B)+ (-.5,.5)$) ;
 \draw [line width=1.2,->] (B) -- ($(B)+ (-.2,.6)$) ;
 \draw [line width=1.2,->] (B) -- ($(B)+ (.5,.6)$) ;
 \draw ($(B)+ (-.5,.5)$) node [above,yshift=-1mm] {\scriptsize{$1$}};
 \draw ($(B)+ (.5,.6)$) node [above,yshift=-1mm] {\scriptsize{$l$}};
  \draw [-]  (0,-0.5) -- (A);
 \draw [black,fill=white] (B) circle (0.13)  ;
  \draw (-0.2,.8) node {$...$};
  \draw (1.3,.6) node {$...$}; 
  \draw [black,fill=white] (0,0) circle (0.13)  ;
  \draw (A) node [left,yshift=-1.2mm] {\tiny{$1$}};
  \draw (B) node [right] {\tiny{$2$}};
\end{tikzpicture}
}
,
\end{align*} 
where the notations $i,k,l,x$ are as in Definition \ref{de: modules} Item 2 and $\sigma_{(2,i),i+1}$ is the block permutation ($\sigma_{(2,i),i+1}(s)=s+1$ for $2\leq s\leq i$; $\sigma_{(2,i),i+1}(i+1)=2$ and $\sigma_{(2,i),i+1}(s)=s$ otherwise). 
The first decomposition corresponds to the top-right path in the diagram of Definition \ref{de: modules} Item 2; the second decomposition  corresponds to the left-bottom path. 

Note that, by pre-composing $\lambda^{\ca{M}}$ with $\alpha:\ca{A}s\to \ca{M}$, one has a left action of $\ca{A}s$: 
\begin{equation*}
\begin{tikzpicture}[>=stealth, arrow/.style={->,shorten >=3pt}, point/.style={coordinate}, pointille/.style={draw=red, top color=white, bottom color=red},scale=0.7,baseline=-0.9ex]%
 \coordinate (A) at (0,0);
 \coordinate (B) at (-1.55,.7);
 \coordinate (C) at (-.3,1.1);
 \coordinate (D) at (1.5,1);
 \draw [-] (A) -- (B);
 \draw [->] (B) -- ($(B)+ (-.5,.5)$) ;
 \draw [->] (B) -- ($(B)+ (-.2,.6)$) ;
 \draw [->] (B) -- ($(B)+ (.3,.6)$) ;
 \draw [->] (C) -- ($(C)+ (-.5,.5)$) ;
 \draw [->] (C) -- ($(C)+ (.3,.6)$) ;
 \draw (B) node [left] {\tiny{$1$}};
 \draw (C) node [left] {\tiny{${2}$}};
 \draw [-] (A) -- (C);
  \draw [-]  (0,-0.5) -- (A);
 \sq{(B)};
 \draw [-] (A) -- (D);
 \draw [->] (D) -- ($(D)+ (-.5,.5)$) ;
 \draw [->] (D) -- ($(D)+ (-.2,.6)$) ;
 \draw [->] (D) -- ($(D)+ (.3,.6)$) ;
 \sq{(D)};
 \draw (D) node [left] {\tiny{$k$}};
 \draw ($(B)+ (-.5,.5)$)  node [left] {\tiny{$1$}};
 \draw ($(B)+ (.3,.6)$)  node [above,yshift=-1.5mm] {\tiny{$m_1$}};
 \draw ($(D)+ (-.5,.5)$)  node [left,xshift=1mm] {\tiny{$1$}};
 \draw ($(D)+ (.3,.6)$)  node [right,xshift=-1mm] {\tiny{$m_k$}};
 \draw [black,fill=white] ($(C)-(.1,.1)$) rectangle ($(C)+(.1,.1)$) ;
  \draw (.45,.8) node {$...$}; 
  \draw [black,fill=black] (0,0) circle (0.11)  ;
  \draw (0,-.5) node [right] {\scriptsize{$\open$}};
\end{tikzpicture}%
\leftrightarrow \lambda^{\ca{A}s}: 
\ca{A}s(k)\ot \ca{Z}(m_1)\ot \cdots \ot \ca{Z}(m_k) \to \ca{Z}(m_1+...+m_k).
 \end{equation*}
 \item  The right action of $\ca{A}s$ on $\ca{Z}$ is given by: 
  \begin{equation*}
  \begin{tikzpicture}[>=stealth, arrow/.style={->,shorten >=3pt}, point/.style={coordinate}, pointille/.style={draw=red, top color=white, bottom color=red},scale=0.65,baseline=-0.9ex]%
 \coordinate (A) at (0,0);
 \coordinate (B) at (0.7,1.5);
 \draw [->] (A) -- (-1,1.2);
 \draw [->] (A) -- (0.2,1.2);
 \draw [-] (A) -- (B);
 \draw [->] (A) -- (1.2,0.9);
 \draw [->] (A) -- (1.8,0.6);
  \draw [-]  (0,-0.5) -- (A);
  \draw [->] (B) -- ($(B)+(-.2,0.5)$);
  \draw [->] (B) -- ($(B)+(+.9,0.5)$);
  \draw (-1,1.2) node [above] {\scriptsize{$1$}};
  \draw (0.1,1) node [above] {\scriptsize{$t-1$}};
  \draw (1.3,0.8) node [above] {\scriptsize{$t+1$}};
  \draw (1.8,.6) node [above,right] {\scriptsize{$k$}};
 \draw [black,fill=white] (-0.1,-0.1) rectangle (0.1,.1) ;
 \draw ($(B)+(-.2,0.5)$) node [left] {\scriptsize{$1$}};
 \draw ($(B)+(+.9,0.5)$) node [right] {\scriptsize{$l$}};
 \draw (-0.2,.8) node {$...$};
 \draw ($(B)+(.3,0.5)$) node {$...$};
 \draw (1.3,.6) node {$...$}; 
  \draw [black,fill=black] (B) circle (0.1)  ;
  \draw (0,-.5) node [right] {\scriptsize{$\open$}};
\end{tikzpicture}
\leftrightarrow \rho_t: \ca{Z}(k)=\ca{Z}(k)\ot \ca{A}s(l)\to \ca{Z}(k+l-1). 
 \end{equation*}
 The associativity for the right action is given by operadic composition of $\ca{RL}_2$. Precisely, for the left-sided square of Definition \ref{de: modules} Item 4, the left-bottom path results from $\ca{A}s(k)\ot \ca{A}s(l)\cong \ca{A}s(k+l-1)$  while the top-right path corresponds to the decomposition of the above tree as  
 \begin{equation*}
  \begin{tikzpicture}[>=stealth, arrow/.style={->,shorten >=3pt}, point/.style={coordinate}, pointille/.style={draw=red, top color=white, bottom color=red},scale=0.65,baseline=-0.9ex]%
 \coordinate (A) at (0,0);
 \coordinate (B) at (0.7,1.5);
 \draw [->] (A) -- (-1,1.2);
 \draw [->] (A) -- (0.2,1.2);
 \draw [-] (A) -- (B);
 \draw [->] (A) -- (1.2,0.9);
 \draw [->] (A) -- (1.8,0.6);
  \draw [-]  (0,-0.5) -- (A);
  \draw [->] (B) -- ($(B)+(-.2,0.5)$);
  \draw [->] (B) -- ($(B)+(+.9,0.5)$);
  \draw (-1,1.2) node [above] {\scriptsize{$1$}};
  \draw (0.1,1) node [above] {\scriptsize{$i-1$}};
  \draw (1.3,0.8) node [above] {\scriptsize{$i+1$}};
  \draw (1.8,.6) node [above,right] {\scriptsize{$m+k-1$}};
 \draw [black,fill=white] (-0.1,-0.1) rectangle (0.1,.1) ;
 \draw ($(B)+(-.2,0.5)$) node [left] {\scriptsize{$1$}};
 \draw ($(B)+(+.9,0.5)$) node [right] {\scriptsize{$l$}};
 \draw (-0.2,.8) node {$...$};
 \draw ($(B)+(.3,0.5)$) node {$...$};
 \draw (1.3,.6) node {$...$}; 
  \draw [black,fill=black] (B) circle (0.1)  ;
  \draw (0,-.5) node [right] {\scriptsize{$\open$}};
  \draw (A) node [left] {\textbf{\scriptsize{${v}$}}};
\end{tikzpicture}
\circ_v
  \begin{tikzpicture}[>=stealth, arrow/.style={->,shorten >=3pt}, point/.style={coordinate}, pointille/.style={draw=red, top color=white, bottom color=red},scale=0.65,baseline=-0.9ex]%
 \coordinate (A) at (0,0);
 \coordinate (B) at (0.7,1.5);
 \draw [->] (A) -- (-1,1.2);
 \draw [->] (A) -- (0.2,1.2);
 \draw [-] (A) -- (B);
 \draw [->] (A) -- (1.2,0.9);
 \draw [->] (A) -- (1.8,0.6);
  \draw [-]  (0,-0.5) -- (A);
  \draw [->] (B) -- ($(B)+(-.2,0.5)$);
  \draw [->] (B) -- ($(B)+(+.9,0.5)$);
  \draw (-1,1.2) node [above] {\scriptsize{$1$}};
  \draw (0.1,1) node [above] {\scriptsize{$t-1$}};
  \draw (1.3,0.8) node [above] {\scriptsize{$t+1$}};
  \draw (1.8,.6) node [above,right] {\scriptsize{$m$}};
 \draw [black,fill=white] (-0.1,-0.1) rectangle (0.1,.1) ;
 \draw ($(B)+(-.2,0.5)$) node [left] {\scriptsize{$1$}};
 \draw ($(B)+(+.9,0.5)$) node [right] {\scriptsize{$k$}};
 \draw (-0.2,.8) node {$...$};
 \draw ($(B)+(.3,0.5)$) node {$...$};
 \draw (1.3,.6) node {$...$}; 
  \draw [black,fill=black] (B) circle (0.1)  ;
  \draw (0,-.5) node [right] {\scriptsize{$\open$}};
\end{tikzpicture}
.
 \end{equation*}
 The right-sided square of Definition \ref{de: modules} Item 4 is obtained similarly. 
 \item The associativity of the left $\ca{M}$-action and right $\ca{A}s$-action on $\ca{Z}$, that is, the square of Definition \ref{de: modules} Item 5, is obtained by considering trees as  
\begin{equation*}
\begin{tikzpicture}[>=stealth, arrow/.style={->,shorten >=3pt}, point/.style={coordinate}, pointille/.style={draw=red, top color=white, bottom color=red},scale=0.65,baseline=1ex]%
 \coordinate (A) at (0,0);
 \coordinate (B) at (-1.5,1);
 \coordinate (C) at (0,1);
 \coordinate (E) at (1.5,1);
 \coordinate (F) at (0,1.5);
 \draw [-] (A) -- (B);
 \draw [-] (A) -- (C);
 \draw [-] (A) -- (E);
 \draw [-] (C) -- (F);
  \draw [-]  (0,-0.5) -- (A);
  \draw [->] (B) -- ($(B)+(-.25,.5)$);
  \draw [->] (B) -- ($(B)+(.25,.5)$);
  \draw [->] (C) -- ($(C)+(-.5,.5)$);
  \draw [->] (C) -- ($(C)+(.5,.5)$);
    \draw [->] (E) -- ($(E)+(-.25,.5)$);
    \draw [->] (E) -- ($(E)+(0,.5)$);
  \draw [->] (E) -- ($(E)+(.25,.5)$);
  \draw [->] (F) -- ($(F)+(-.25,.5)$);
  \draw [->] (F) -- ($(F)+(.25,.5)$);
  \draw (-.5,.7) node {\small{$...$}};
  \draw (.5,.7) node {\small{$...$}};
  \draw [black,fill=white] (A) circle (0.12)  ;
  \sq{(B)};
  \sq{(C)};
  \sq{(E)};
  \draw [black,fill=black] (F) circle (0.10)  ;
  \draw (0,-.5) node [right] {\scriptsize{$\open$}};
  \draw (A) node[left] {\tiny{$1$}};
  \draw (B) node[left] {\tiny{$2$}};
  \draw (C) node[left] {\tiny{$s+1$}};
  \draw (E) node[left] {\tiny{$k+1$}};
\end{tikzpicture}
= 
\begin{tikzpicture}[>=stealth, arrow/.style={->,shorten >=3pt}, point/.style={coordinate}, pointille/.style={draw=red, top color=white, bottom color=red},scale=0.65,baseline=1ex]%
 \coordinate (A) at (0,0);
 \coordinate (B) at (-1.5,1);
 \coordinate (C) at (0,1);
 \coordinate (E) at (1.5,1);
 \draw [-] (A) -- (B);
 \draw [-] (A) -- (C);
 \draw [-] (A) -- (E);
  \draw [-]  (0,-0.5) -- (A);
  \draw [->] (B) -- ($(B)+(-.25,.5)$);
  \draw [->] (B) -- ($(B)+(.25,.5)$);
  \draw [->] (C) -- ($(C)+(-.2,.5)$);
  \draw [->] (C) -- ($(C)+(.2,.5)$);
    \draw [->] (E) -- ($(E)+(-.25,.5)$);
    \draw [->] (E) -- ($(E)+(0,.5)$);
  \draw [->] (E) -- ($(E)+(.25,.5)$);
  \draw [->] (C) -- ($(C)+(-.5,.5)$);
  \draw [->] (C) -- ($(C)+(.5,.5)$);
  \draw (-.5,.7) node {\small{$...$}};
  \draw (.5,.7) node {\small{$...$}};
  \draw [black,fill=white] (A) circle (0.12)  ;
  \sq{(B)};
  \sq{(C)};
  \sq{(E)};
  \draw (0,-.5) node [right] {\scriptsize{$\open$}};
  \draw (A) node[left] {\tiny{$1$}};
  \draw (B) node[left] {\tiny{$2$}};
  \draw (C) node[left] {\tiny{$s+1$}};
  \draw (E) node[left] {\tiny{$k+1$}};
\end{tikzpicture}
\circ_{s+1}
\begin{tikzpicture}[>=stealth, arrow/.style={->,shorten >=3pt}, point/.style={coordinate}, pointille/.style={draw=red, top color=white, bottom color=red},scale=0.65,baseline=0.9ex]%
 \coordinate (B) at (0,0);
 \coordinate (C) at (0,0.6);
 \draw [-] (0,-.5) --  (B) -- (C);
 \draw [->] (B) -- ($(B)+ (-.75,.5)$) ;
 \draw [->] (B) -- ($(B)+ (.75,.5)$) ;
 \draw [->] (C) -- ($(C)+ (-.5,.5)$) ;
 \draw [->] (C) -- ($(C)+ (.5,.5)$) ;
 \sq{(B)};
  \draw [black,fill=black] (C) circle (0.11)  ;
  \draw (0,-.5) node [right] {\scriptsize{$\open$}};
\end{tikzpicture}%
=
\begin{tikzpicture}[>=stealth, arrow/.style={->,shorten >=3pt}, point/.style={coordinate}, pointille/.style={draw=red, top color=white, bottom color=red},scale=0.65,baseline=0.9ex]%
 \coordinate (B) at (0,0);
 \coordinate (C) at (0,0.6);
 \draw [-] (0,-.5) --  (B) -- (C);
  \draw [->] (B) -- ($(B)+ (-1,.25)$) ;
 \draw [->] (B) -- ($(B)+ (-1,.5)$) ;
 \draw [->] (B) -- ($(B)+ (-.25,.5)$) ;
 \draw [->] (B) -- ($(B)+ (.25,.5)$) ;
 \draw [->] (B) -- ($(B)+ (1,.5)$) ;
 \draw [->] (B) -- ($(B)+ (1,.25)$) ;
 \draw [->] (B) -- ($(B)+ (1,.1)$) ;
 \draw [->] (C) -- ($(C)+ (-.5,.5)$) ;
 \draw [->] (C) -- ($(C)+ (.5,.5)$) ;
 \draw (-.5,.5) node {\scriptsize{$...$}};
 \draw (.5,.5) node {\scriptsize{$...$}};
 \sq{(B)};
 \draw (A) node[left,yshift=-2mm] {\tiny{$1$}};
  \draw [black,fill=black] (C) circle (0.11)  ;
  \draw (0,-.5) node [right] {\scriptsize{$\open$}};
\end{tikzpicture}%
\circ_{1}
\begin{tikzpicture}[>=stealth, arrow/.style={->,shorten >=3pt}, point/.style={coordinate}, pointille/.style={draw=red, top color=white, bottom color=red},scale=0.65,baseline=1ex]%
 \coordinate (A) at (0,0);
 \coordinate (B) at (-1.5,1);
 \coordinate (C) at (0,1);
 \coordinate (E) at (1.5,1);
 \draw [-] (A) -- (B);
 \draw [-] (A) -- (C);
 \draw [-] (A) -- (E);
  \draw [-]  (0,-0.5) -- (A);
  \draw [->] (B) -- ($(B)+(-.25,.5)$);
  \draw [->] (B) -- ($(B)+(.25,.5)$);
  \draw [->] (C) -- ($(C)+(0,.5)$);
    \draw [->] (E) -- ($(E)+(-.25,.5)$);
    \draw [->] (E) -- ($(E)+(0,.5)$);
  \draw [->] (E) -- ($(E)+(.25,.5)$);
  \draw [->] (C) -- ($(C)+(-.5,.5)$);
  \draw [->] (C) -- ($(C)+(.5,.5)$);
  \draw (-.5,.7) node {\small{$...$}};
  \draw (.5,.7) node {\small{$...$}};
  \draw [black,fill=white] (A) circle (0.12)  ;
  \sq{(B)};
  \sq{(C)};
  \sq{(E)};
  \draw (0,-.5) node [right] {\scriptsize{$\open$}};
  \draw (A) node[left] {\tiny{$1$}};
  \draw (B) node[left] {\tiny{$2$}};
  \draw (C) node[left] {\tiny{$s+1$}};
  \draw (E) node[left] {\tiny{$k+1$}};
\end{tikzpicture}.
\end{equation*}
\item The morphism  $\iota:\ca{M}\to \ca{Z}$ is given by corollas in a closed vertex with an open output: 
 \begin{equation*}
\begin{tikzpicture}[>=stealth, arrow/.style={->,shorten >=3pt}, point/.style={coordinate}, pointille/.style={draw=red, top color=white, bottom color=red},scale=0.65,baseline=-0.9ex]%
 \coordinate (A) at (0,0);
 \coordinate (B) at (-.75,.75);
 \coordinate (C) at (-.5,.75);
 \coordinate (D) at (.5,.75);
 \coordinate (E) at (.75,.75);
 \draw [->] (A) -- (B);
 \draw [->] (A) -- (C);
 \draw [->] (A) -- (D);
 \draw [->] (A) -- (E);
  \draw [-]  (0,-0.5) -- (A);
  \draw (B) node [left] {\scriptsize{$1$}};
  \draw (E) node [above,right] {\scriptsize{$k$}};
  \draw (0,.6) node {$...$};
  \draw [black,fill=white] (0,0) circle (0.12)  ;
  \draw (0,-.5) node [right] {\scriptsize{$\open$}};
\end{tikzpicture}
\leftrightarrow \iota: \ca{M}(k)\to \ca{Z}(k). 
 \end{equation*}
\end{enumerate}
Note that the operations as those given by $k$-corollas in a closed vertex with $1\leq j\leq k$ open vertices at inputs $1\leq b_1<...<b_j\leq k$, 
 \begin{equation*}
\begin{tikzpicture}[>=stealth, arrow/.style={->,shorten >=3pt}, point/.style={coordinate}, pointille/.style={draw=red, top color=white, bottom color=red},scale=0.65,baseline=0.9ex]%
 \coordinate (A) at (0,0);
 \coordinate (B) at (-1.5,1.3);
 \coordinate (C) at (.2,1.2);
 \coordinate (D) at (1.5,1.3);
 \draw [-] (A) -- (B);
 \draw [->] (B) -- ($(B)+ (-.5,.5)$) ;
 \draw [->] (A) -- ($(-1.5,.5)$) ;
 \draw [->] (B) -- ($(B)+ (-.2,.6)$) ;
 \draw [->] (B) -- ($(B)+ (.3,.6)$) ;
 \draw [->] (C) -- ($(C)+ (-.5,.5)$) ;
 \draw [->] (C) -- ($(C)+ (.3,.6)$) ;
 \draw (-1.5,.5) node [above,left] {\scriptsize{$1$}};
 \draw (B) node [left] {\tiny{$v_{b_1}$}};
 \draw (C) node [left] {\tiny{$v_{b_{r}}$}};
 \draw (1.8,.6) node [below] {\scriptsize{$k$}};
 \draw [-] (A) -- (0.2,1.2);
 \draw [->] (A) -- (0.6,1);
 \draw [->] (A) -- (1.8,0.6);
  \draw [-]  (0,-0.5) -- (A);
 \sq{(B)};
 \draw [-] (A) -- (D);
 \draw [->] (D) -- ($(D)+ (-.5,.5)$) ;
 \draw [->] (A) -- ($(-1.5,.5)$) ;
 \draw [->] (D) -- ($(D)+ (-.2,.6)$) ;
 \draw [->] (D) -- ($(D)+ (.3,.6)$) ;
 \sq{(D)};
 \draw (D) node [right] {\tiny{$v_{b_j}$}};
 \draw [black,fill=white] (0.1,1.1) rectangle (0.3,1.3) ;
  \draw (-0.2,.8) node {$...$};
  \draw (1.3,.6) node {$...$}; 
  \draw [black,fill=white] (0,0) circle (0.13)  ;
  \draw (0,-.5) node [right] {\scriptsize{$\open$}};
\end{tikzpicture}%
\leftrightarrow \lambda_{b_1,...,b_j}: 
\ca{M}(k)\ot \ca{Z}(m_1)\ot \cdots \ot \ca{Z}(m_j) \to \ca{Z}(k-j+\sum_{1\leq p\leq j}m_p),
 \end{equation*}
can be obtained by pre-composing the left action $\lambda$ by $\ca{A}s(1)\to \ca{Z}(1)$ at the all inputs other than the $b_i$'s. This results from the fact that such a $k$-corolla is obtained as: 
\begin{equation}\label{eq: decompo left}
\left(\left(\cdots \left(
\begin{tikzpicture}[>=stealth, arrow/.style={->,shorten >=3pt}, point/.style={coordinate}, pointille/.style={draw=red, top color=white, bottom color=red},scale=0.65,baseline=1.9ex]%
 \coordinate (A) at (0,0);
 \coordinate (B) at (-1.5,1.3);
 \coordinate (C) at (.2,1.2);
 \coordinate (D) at (1.8,1.3);
 \coordinate (E) at (.55,1);
 \draw [-] (A) -- (B);
 \draw [->] (B) -- ($(B)+ (-.5,.5)$) ;
 \draw [-] (A) -- ($(-1.5,.5)$) ;
 \draw [->] ($(-1.5,.5)$) -- ($(-1.5,.5)+(-.75,.25)$) ;
 \draw [->] (B) -- ($(B)+ (-.2,.6)$) ;
 \draw [->] (B) -- ($(B)+ (.3,.6)$) ;
 \draw [->] (C) -- ($(C)+ (-.5,.5)$) ;
 \draw [->] (C) -- ($(C)+ (.3,.6)$) ;
 \draw (-1.5,.5) node [below] {\scriptsize{$v_1$}};
 \draw (B) node [left] {\tiny{$v_{b_1}$}};
 \draw (C) node [left] {\tiny{$v_{b_{r}}$}};
 \draw (1.8,.6) node [below] {\scriptsize{$v_k$}};
 \draw [-] (A) -- (0.2,1.2);
 \draw [-] (A) -- (E);
 \draw [->] (E) -- ($(E)+(.2,.4)$);
 \draw [-] (A) -- (1.8,0.6);
 \draw [->] (1.8,0.6) -- (2.4,.9);
  \draw [-]  (0,-0.5) -- (A);
 \sq{(E)};
 \sq{(B)};
 \draw ($(E)+(-.05,0)$) node [right] {\tiny{$v_{a_s}$}};
 \draw [-] (A) -- (D);
 \draw [->] (D) -- ($(D)+ (-.5,.5)$) ;
 \draw [->] (D) -- ($(D)+ (-.2,.6)$) ;
 \draw [->] (D) -- ($(D)+ (.3,.6)$) ;
 \sq{(D)};
 \draw (D) node [right] {\tiny{$v_{b_j}$}};
 \draw [black,fill=white] (1.7,0.5) rectangle (1.9,0.7) ;
 \draw [black,fill=white] ($(-1.6,.4)$) rectangle ($(-1.4,.6)$) ;
 \draw [black,fill=white] (0.1,1.1) rectangle (0.3,1.3) ;
  \draw (-0.2,.8) node {$...$};
  \draw (1.3,.6) node {$...$}; 
  \draw [black,fill=white] (0,0) circle (0.13)  ;
  \draw (0,-.5) node [right] {\scriptsize{$\open$}};
\end{tikzpicture}%
\circ_{v_k}
\begin{tikzpicture}[>=stealth, arrow/.style={->,shorten >=3pt}, point/.style={coordinate}, pointille/.style={draw=red, top color=white, bottom color=red},scale=0.65,baseline=-0.9ex]%
 \coordinate (A) at (0,0);
 \draw [-] (A) -- ($(0,-.5)$);
 \draw [->] (A) -- ($(0,1)$) ;
 \draw [black,fill=black] (A) circle (0.12)  ;
 \draw (0,-.5) node [right] {\scriptsize{$\open$}};
\end{tikzpicture}%
\right)\cdots  \circ_{v_{a_s}} 
\begin{tikzpicture}[>=stealth, arrow/.style={->,shorten >=3pt}, point/.style={coordinate}, pointille/.style={draw=red, top color=white, bottom color=red},scale=0.65,baseline=-0.9ex]%
 \coordinate (A) at (0,0);
 \draw [-] (A) -- ($(0,-.5)$);
 \draw [->] (A) -- ($(0,1)$) ;
 \draw [black,fill=black] (A) circle (0.12)  ;
 \draw (0,-.5) node [right] {\scriptsize{$\open$}};
\end{tikzpicture}%
\right)
 \circ_{v_{a_{s-1}}} 
 \begin{tikzpicture}[>=stealth, arrow/.style={->,shorten >=3pt}, point/.style={coordinate}, pointille/.style={draw=red, top color=white, bottom color=red},scale=0.65,baseline=-0.9ex]%
 \coordinate (A) at (0,0);
 \draw [-] (A) -- ($(0,-.5)$);
 \draw [->] (A) -- ($(0,1)$) ;
 \draw [black,fill=black] (A) circle (0.12)  ;
 \draw (0,-.5) node [right] {\scriptsize{$\open$}};
\end{tikzpicture} 
 \cdots \circ_{v_1} 
 \begin{tikzpicture}[>=stealth, arrow/.style={->,shorten >=3pt}, point/.style={coordinate}, pointille/.style={draw=red, top color=white, bottom color=red},scale=0.65,baseline=-0.9ex]%
 \coordinate (A) at (0,0);
 \draw [-] (A) -- ($(0,-.5)$);
 \draw [->] (A) -- ($(0,1)$) ;
 \draw [black,fill=black] (A) circle (0.12)  ;
 \draw (0,-.5) node [right] {\scriptsize{$\open$}};
\end{tikzpicture} 
 \right), 
 \end{equation}
 where the $a_s$'s are the inputs other than the $b_j$'s ($a_1=1$ in the picture above). In particular, the weak left action is obtained whenever $j=1$ and the left action corresponds to $j=k$. Doing this to the corollas of $\lambda^{\ca{A}s}$ in  (3) above  endows  $\ca{Z}$ with a structure of weak bimodule over $\ca{A}s$. 
 
Let us remark that, because of the decomposition 
  $\begin{tikzpicture}[>=stealth, arrow/.style={->,shorten >=3pt}, point/.style={coordinate}, pointille/.style={draw=red, top color=white, bottom color=red},scale=0.65,baseline=-0.9ex]%
 \coordinate (A) at (0,0);
 \coordinate (B) at (-.75,.75);
 \coordinate (C) at (-.5,.75);
 \coordinate (D) at (.5,.75);
 \coordinate (E) at (.75,.75);
 \draw [->] (A) -- (B);
 \draw [->] (A) -- (C);
 \draw [->] (A) -- (D);
 \draw [->] (A) -- (E);
  \draw [-]  (0,-0.5) -- (A);
  \draw (B) node [left] {\scriptsize{$1$}};
  \draw (E) node [above,right] {\scriptsize{$k$}};
  \draw (0,.6) node {$...$};
  \draw [black,fill=black] (0,0) circle (0.12)  ;
  \draw (0,-.5) node [right] {\scriptsize{$\open$}};
\end{tikzpicture}
=
  \begin{tikzpicture}[>=stealth, arrow/.style={->,shorten >=3pt}, point/.style={coordinate}, pointille/.style={draw=red, top color=white, bottom color=red},scale=0.65,baseline=-0.9ex]%
 \coordinate (A) at (0,0);
 \coordinate (B) at (-.75,.75);
 \coordinate (C) at (-.5,.75);
 \coordinate (D) at (.5,.75);
 \coordinate (E) at (.75,.75);
 \draw [->] (A) -- (B);
 \draw [->] (A) -- (C);
 \draw [->] (A) -- (D);
 \draw [->] (A) -- (E);
  \draw [-]  (0,-0.5) -- (A);
  \draw (B) node [left] {\scriptsize{$1$}};
  \draw (E) node [above,right] {\scriptsize{$k$}};
  \draw (0,.6) node {$...$};
  \draw [black,fill=white] (0,0) circle (0.12)  ;
  \draw (0,-.5) node [right] {\scriptsize{$\open$}};
  \draw (A) node [right] {\scriptsize{$v$}};
\end{tikzpicture}
\circ_v
\begin{tikzpicture}[>=stealth, arrow/.style={->,shorten >=3pt}, point/.style={coordinate}, pointille/.style={draw=red, top color=white, bottom color=red},scale=0.65,baseline=-0.9ex]%
 \coordinate (A) at (0,0);
 \coordinate (B) at (-.75,.75);
 \coordinate (C) at (-.5,.75);
 \coordinate (D) at (.5,.75);
 \coordinate (E) at (.75,.75);
 \draw [->] (A) -- (B);
 \draw [->] (A) -- (C);
 \draw [->] (A) -- (D);
 \draw [->] (A) -- (E);
  \draw [-]  (0,-0.5) -- (A);
  \draw (B) node [left] {\scriptsize{$1$}};
  \draw (E) node [above,right] {\scriptsize{$k$}};
  \draw (0,.6) node {$...$};
  \draw [black,fill=black] (0,0) circle (0.12)  ;
\end{tikzpicture}$,  
 the  morphism $\ca{A}s\to \ca{Z}$ given by the left-sided tree corresponds to  $\iota\alpha: \ca{A}s\to \ca{M} \to \ca{Z}$. More generally, note that any tree can be obtained as a composition of \emph{elementary trees}:  trees as in (1)-(4), (6) above and corollas in a neutral  vertex.  
  Therefore, for $(\ca{M},\ca{Z})\in \cat{E}$, the operation corresponding to a given a tree $T$ is defined as given by any of the decompositions of $T$ in elementary trees; the independence of this operation regarding the  different decompositions is ensured by the  properties of $(\ca{M},\ca{Z})$.  
 For instance, the two decompositions 
  \begin{equation*}
   \begin{tikzpicture}[>=stealth, arrow/.style={->,shorten >=3pt}, point/.style={coordinate}, pointille/.style={draw=red, top color=white, bottom color=red},scale=0.65,baseline=1ex]%
 \coordinate (A) at (0,0);
 \coordinate (B) at (-1.5,1);
 \coordinate (C) at (-.5,1);
 \coordinate (D) at (.5,1);
 \coordinate (E) at (1.5,1);
 \draw [-] (A) -- (B);
 \draw [-] (A) -- (C);
 \draw [-] (A) -- (D);
 \draw [-] (A) -- (E);
  \draw [-]  (0,-0.5) -- (A);
  \draw [->] (B) -- ($(B)+(-.25,.5)$);
  \draw [->] (B) -- ($(B)+(.25,.5)$);
  \draw [->] (C) -- ($(C)+(-.25,.5)$);
  \draw [->] (C) -- ($(C)+(.25,.5)$);
  \draw [->] (D) -- ($(D)+(-.25,.5)$);
  \draw [->] (D) -- ($(D)+(.25,.5)$);
    \draw [->] (E) -- ($(E)+(-.25,.5)$);
    \draw [->] (E) -- ($(E)+(0,.5)$);
  \draw [->] (E) -- ($(E)+(.25,.5)$);
  \draw [black,fill=white] (A) circle (0.12)  ;
  \sq{(B)};
  \sq{(D)};
  \draw [black,fill=white] (C) circle (0.12)  ;
  \draw [black,fill=white] (E) circle (0.12)  ;
  \draw (0,-.5) node [right] {\scriptsize{$\open$}};
\end{tikzpicture}
=\left(
\begin{tikzpicture}[>=stealth, arrow/.style={->,shorten >=3pt}, point/.style={coordinate}, pointille/.style={draw=red, top color=white, bottom color=red},scale=0.65,baseline=1ex]%
 \coordinate (A) at (0,0);
 \coordinate (B) at (-1.5,1);
 \coordinate (C) at (-.5,1);
 \coordinate (D) at (.5,1);
 \coordinate (E) at (1.5,1);
 \draw [-] (A) -- (B);
 \draw [-] (A) -- (C);
 \draw [-] (A) -- (D);
 \draw [-] (A) -- (E);
  \draw [-]  (0,-0.5) -- (A);
  \draw [->] (B) -- ($(B)+(-.25,.5)$);
  \draw [->] (B) -- ($(B)+(.25,.5)$);
  \draw [->] (C) -- ($(C)+(-.25,.5)$);
  \draw [->] (C) -- ($(C)+(.25,.5)$);
  \draw [->] (D) -- ($(D)+(-.25,.5)$);
  \draw [->] (D) -- ($(D)+(.25,.5)$);
    \draw [->] (E) -- ($(E)+(-.25,.5)$);
    \draw [->] (E) -- ($(E)+(0,.5)$);
  \draw [->] (E) -- ($(E)+(.25,.5)$);
  \draw (C) node [right] {\scriptsize{$v_2$}};
  \draw (E) node [above,right] {\scriptsize{$v_4$}};
  \draw [black,fill=white] (A) circle (0.12)  ;
  \sq{(B)};
  \sq{(C)};
  \sq{(E)};
  \sq{(D)};
  \draw (0,-.5) node [right] {\scriptsize{$\open$}};
\end{tikzpicture}
\circ_{v_2}
\begin{tikzpicture}[>=stealth, arrow/.style={->,shorten >=3pt}, point/.style={coordinate}, pointille/.style={draw=red, top color=white, bottom color=red},scale=0.65,baseline=-0.9ex]%
 \coordinate (A) at (0,0);
 \coordinate (B) at (-.5,.75);
 \coordinate (C) at (.5,.75);
 \draw [->] (A) -- (B);
 \draw [->] (A) -- (C);
  \draw [-]  (0,-0.5) -- (A);
  \draw [black,fill=white] (A) circle (0.12)  ;
  \draw (0,-.5) node [right] {\scriptsize{$\open$}};
\end{tikzpicture}
\right)
\circ_{v_4}
\begin{tikzpicture}[>=stealth, arrow/.style={->,shorten >=3pt}, point/.style={coordinate}, pointille/.style={draw=red, top color=white, bottom color=red},scale=0.65,baseline=-0.9ex]%
 \coordinate (A) at (0,0);
 \coordinate (B) at (-.5,.75);
 \coordinate (C) at (.5,.75);
 \coordinate (D) at (0,.75);
 \draw [->] (A) -- (B);
 \draw [->] (A) -- (C);
 \draw [->] (A) -- (D);
  \draw [-]  (0,-0.5) -- (A);
  \draw [black,fill=white] (A) circle (0.12)  ;
  \draw (0,-.5) node [right] {\scriptsize{$\open$}};
\end{tikzpicture}
=
\begin{tikzpicture}[>=stealth, arrow/.style={->,shorten >=3pt}, point/.style={coordinate}, pointille/.style={draw=red, top color=white, bottom color=red},scale=0.65,baseline=1ex]%
 \coordinate (A) at (0,0);
 \coordinate (B) at (-1.5,1);
 \coordinate (C1) at (-.6,1);
 \coordinate (C2) at (-.3,1);
 \coordinate (D) at (.5,1);
 \coordinate (E1) at (1.2,1);
 \coordinate (E2) at (1.5,.75);
 \coordinate (E3) at (1.7,.5);
 \draw [-] (A) -- (B);
 \draw [->] (A) -- (C1);
 \draw [->] (A) -- (C2);
 \draw [-] (A) -- (D);
 \draw [->] (A) -- (E1);
 \draw [->] (A) -- (E2);
 \draw [->] (A) -- (E3);
  \draw [-]  (0,-0.5) -- (A);
  \draw [->] (B) -- ($(B)+(-.25,.5)$);
  \draw [->] (B) -- ($(B)+(.25,.5)$);
  \draw [->] (D) -- ($(D)+(-.25,.5)$);
  \draw [->] (D) -- ($(D)+(.25,.5)$);
  \draw [black,fill=white] (A) circle (0.12)  ;
  \sq{(B)};
  \sq{(D)};
  \draw (A) node [left,yshift=-1mm] {\scriptsize{$v$}};
  \draw (0,-.5) node [right] {\scriptsize{$\open$}};
\end{tikzpicture}
\circ_{v}
\begin{tikzpicture}[>=stealth, arrow/.style={->,shorten >=3pt}, point/.style={coordinate}, pointille/.style={draw=red, top color=white, bottom color=red},scale=0.65,baseline=1ex]%
 \coordinate (A) at (0,0);
 \coordinate (B) at (-1.5,1);
 \coordinate (C) at (-.5,1);
 \coordinate (D) at (.5,1);
 \coordinate (E) at (1.5,1);
 \draw [->] (A) -- (B);
 \draw [-] (A) -- (C);
 \draw [->] (A) -- (D);
 \draw [-] (A) -- (E);
  \draw [-]  (0,-0.5) -- (A);
  \draw [->] (C) -- ($(C)+(-.25,.5)$);
  \draw [->] (C) -- ($(C)+(.25,.5)$); 
  \draw [->] (E) -- ($(E)+(-.25,.5)$);
  \draw [->] (E) -- ($(E)+(0,.5)$);
  \draw [->] (E) -- ($(E)+(.25,.5)$);
  \draw [black,fill=white] (A) circle (0.12)  ;
  \draw [black,fill=white] (C) circle (0.12)  ;
  \draw [black,fill=white] (E) circle (0.12)  ;
\end{tikzpicture}, 
  \end{equation*}
corresponds to ($2$ iterations of) the diagram 2. of Definition \ref{de: modules}, plus the fact that $\iota$ is a left module map and $\eta:1_{\cat{C}}\to \ca{M}$ is the unit. Here, the penultimate tree is not elementary but it  admits a decomposition into elementary trees as in \eqref{eq: decompo left}. 
\end{proof}

\subsection{The algebras over $\ca{RL}'_2$}\label{sec: def repres prime}

 We show that the operad $\ca{RL}'_2$ encodes the pairs $(\ca{M},\ca{Z})$ subject to the following conditions.
\begin{enumerate}[I'.]
\item $\ca{M}$ is a  multiplicative non-$\Sigma$ operad.\label{B1}
 \item $\ca{Z}\in \bmodasm$ with a morphism $\iota:\ca{M}\to \ca{Z}$ in $\bmodasm$.  \label{B3} 
\end{enumerate}

Let $\cat{E'}$ be the category with objects the pairs $(\ca{M},\ca{Z})$ satisfying the two conditions \ref{B1}' and \ref{B3}' above; morphisms are the pairs 
$(f,g):(\ca{M},\ca{Z}) \to (\ca{M}',\ca{Z}')$  subject to the following conditions:
\begin{itemize}
 \item $f:\ca{M}\to \ca{M}'$ is a morphism of multiplicative non-$\Sigma$ operads;
 \item $g:\ca{Z}\to\ca{Z}'$ is an $f$-equivariant morphism of right modules over $\ca{M}$; 
 \item $g$ is a morphism of bimodules over $\ca{A}s$; and, 
 \item $\iota'\circ f=g\circ \iota$. 
\end{itemize}
\begin{prop}\label{prop: hypI,II,III correspondance right} 
 The category of $\ca{RL}'_2$-algebras in $\cat{C}$ is isomorphic to the category $\cat{E'}$.
\end{prop}
\begin{proof}
 The only significant differences with Proposition \ref{prop: hypI,II,III correspondance} are the following. 
 The right action of $\ca{M}$ on $\ca{Z}$ is given by  the $k$-corollas in an open vertex with a closed vertex at an  input: 
\begin{equation*}
  \begin{tikzpicture}[>=stealth, arrow/.style={->,shorten >=3pt}, point/.style={coordinate}, pointille/.style={draw=red, top color=white, bottom color=red},scale=0.7,baseline=-0.9ex]%
 \coordinate (A) at (0,0);
 \coordinate (B) at (0.7,1.5);
 \draw [->] (A) -- (-1,1.2);
 \draw [->] (A) -- (0.2,1.2);
 \draw [-] (A) -- (B);
 \draw [->] (A) -- (1.2,0.9);
 \draw [->] (A) -- (1.8,0.6);
  \draw [-]  (0,-0.5) -- (A);
  \draw [->] (B) -- ($(B)+(-.2,0.7)$);
  \draw [->] (B) -- ($(B)+(+.9,0.7)$);
  \draw (-1,1.2) node [above] {\scriptsize{$1$}};
  \draw (0.1,1) node [above] {\scriptsize{$i-1$}};
  \draw (1.3,0.8) node [above] {\scriptsize{$i+1$}};
  \draw (1.8,.6) node [above,right] {\scriptsize{$k$}};
 \draw [black,fill=white] (-0.1,-0.1) rectangle (0.1,.1) ;
 \draw ($(B)+(-.2,0.7)$) node [left] {\scriptsize{$1$}};
 \draw ($(B)+(+.9,0.7)$) node [right] {\scriptsize{$l$}};
 \draw ($(B)+(+.3,0.7)$ node {$...$};
  \draw (-0.2,.8) node {$...$};
  \draw (1.3,.6) node {$...$}; 
  \draw [black,fill=white] (B) circle (0.11)  ;
  \draw (0,-.5) node [right] {\scriptsize{$\open$}};
\end{tikzpicture}
\leftrightarrow \rho_i: \ca{Z}(k)\ot \ca{M}(l)\to \ca{Z}(k+l-1). 
 \end{equation*}
 The operations  such as those given by $k$-corollas in a neutral  vertex with $1\leq j\leq k$ open vertices at inputs $1\leq b_1<...<b_j\leq k$, 
 \begin{equation*}
\begin{tikzpicture}[>=stealth, arrow/.style={->,shorten >=3pt}, point/.style={coordinate}, pointille/.style={draw=red, top color=white, bottom color=red},scale=0.65,baseline=0.9ex]%
 \coordinate (A) at (0,0);
 \coordinate (B) at (-1.5,1.3);
 \coordinate (C) at (.2,1.2);
 \coordinate (D) at (1.5,1.3);
 \draw [-] (A) -- (B);
 \draw [->] (B) -- ($(B)+ (-.5,.5)$) ;
 \draw [->] (A) -- ($(-1.5,.5)$) ;
 \draw [->] (B) -- ($(B)+ (-.2,.6)$) ;
 \draw [->] (B) -- ($(B)+ (.3,.6)$) ;
 \draw [->] (C) -- ($(C)+ (-.5,.5)$) ;
 \draw [->] (C) -- ($(C)+ (.3,.6)$) ;
 \draw (-1.5,.5) node [above,left] {\scriptsize{$1$}};
 \draw (B) node [left] {\tiny{$v_{b_1}$}};
 \draw (C) node [left] {\tiny{$v_{b_{r}}$}};
 \draw (1.8,.6) node [below] {\scriptsize{$k$}};
 \draw [-] (A) -- (0.2,1.2);
 \draw [->] (A) -- (0.6,1);
 \draw [->] (A) -- (1.8,0.6);
  \draw [-]  (0,-0.5) -- (A);
 \sq{(B)};
 \draw [-] (A) -- (D);
 \draw [->] (D) -- ($(D)+ (-.5,.5)$) ;
 \draw [->] (A) -- ($(-1.5,.5)$) ;
 \draw [->] (D) -- ($(D)+ (-.2,.6)$) ;
 \draw [->] (D) -- ($(D)+ (.3,.6)$) ;
 \sq{(D)};
 \draw (D) node [right] {\tiny{$v_{b_j}$}};
 \draw [black,fill=white] (0.1,1.1) rectangle (0.3,1.3) ;
  \draw (-0.2,.8) node {$...$};
  \draw (1.3,.6) node {$...$}; 
  \draw [black,fill=black] (0,0) circle (0.13)  ;
  \draw (0,-.5) node [right] {\scriptsize{$\open$}};
\end{tikzpicture}%
\leftrightarrow \lambda_{b_1,...,b_j}: 
\ca{A}s(k)\ot \ca{Z}(m_1)\ot \cdots \ot \ca{Z}(m_j) \to \ca{Z}(k-j+\sum_{1\leq p\leq j}m_p),
 \end{equation*}
 are obtained via the left action of $\ca{A}s$ on $\ca{Z}$, using the fact that $\iota\alpha:\ca{A}s\to \ca{M}\to \ca{Z}$ is a morphism of left-modules. 
\end{proof}

\bibliographystyle{alpha}

\end{document}